\newcommand\N{{\mathbb N}}
\newcommand\R{{\mathbb R}}
\newcommand\Pp{{\mathbb P}}
\newcommand\Ee{{\mathbb E}}
\def\EE{{\mathcal E}}
\def\LL{{\mathcal L}}
\def\MM{{\mathcal M}}
\def\NN{{\mathcal N}}
\def\PP{{\mathcal P}}
\def\SS{{\mathcal S}}
\def\WW{{\mathcal W}}
\def\TTT{{\mathscr T}}
\def\eps{{\varepsilon}}
\newtheorem{theo}{Theorem}
\newtheorem{prop}[theo]{Proposition}
\newtheorem{lem}[theo]{Lemma}
\newtheorem{rem}[theo]{Remark}
\newcommand{\beqn}{\begin{equation}}
\newcommand{\eeqn}{\end{equation}}
\newcommand{\bear}{\begin{eqnarray}}
\newcommand{\eear}{\end{eqnarray}}
\newcommand{\bean}{\begin{eqnarray*}}
\newcommand{\eean}{\end{eqnarray*}}
\newcommand{\loi}{{\mathcal L}}
\newcommand{\indiq}{{\bf 1}}
\def\eps{{\varepsilon}}
\def\signcq{\bigskip \begin{center} {\sc Crist\'obal Qui\~ninao\par\vspace{3mm}
Universit\'e Pierre et Marie Curie \par
Laboratoire Jacques-Louis Lions, CNRS UMR 7598\par
4 place de Jussieu
F-75005, Paris\par
FRANCE\par
and Mathematical Neuroscience Team, CIRB\par 
College de France\par\vspace{3mm}
e-mail:} \tt{cristobal.quininao@college-de-france.fr}  \end{center}}
\begin{document}
%
%\title[On Poisson coupling]{Backstage at an age-structured model of neuronal network: chaoticity of the particle system}

\title[On a Poisson coupling for an Age-structured neuronal network]{A microscopic spiking neuronal network for the age-structured model}

\author{Cristobal Qui\~ninao}

\maketitle

\begin{center} {\bf Work in progress - Preliminary version of \today}
\end{center}

\begin{abstract}
%Networks of the brain are composed of a very large number of neurons connected. In the past few decades several models have been proposed to comprehend from a statistical point of view these networks. One particular interesting model, due to Pakdaman, Perthame and Salort (PPS), is the age-structured/renewal equation. 
We introduce a microscopic spiking network consistent with the age-structured/renewal equation proposed by Pakdaman, Perthame and Salort. It is a jump process interacting through a global activity variable with random delays. We show the well-posedness of the particle system and the mean-field equation. Moreover we show the propagation of chaos property and we quantify the rate of convergence under the assumption of exponential moments on the initial data.
\end{abstract}

\tableofcontents

%%%%%%%%%%%%%%%%%%%%%%%%%%%%%%%%%%%%%%%%%%%%%%%%%%%%%%%%%%%%%%%%

\section{Introduction and main results}
\label{sec:intro}
\setcounter{equation}{0}
\setcounter{theo}{0}

%\subsection{Setting of the Problem}\label{sec:intro0}

In a series of remarkable papers, Pakdaman, Perthame and Salort (PPS)~\cite{MR2576373,MR3071416,MR3246939} introduced a versatile model for the large-scale dynamics of neuronal networks. These equations describe the probability distribution of the time elapsed since the last spike fired as an age-structured nonlinear PDE. Inspired by the dynamics of these macroscopic equations, we propose here a microscopic model describing the dynamics of a finite number of neurons, and that provides a realistic neural network model consistent with the PPS model, in the sense that in the thermodynamic limit, propagation of chaos and convergence to the PPS equation is proved.
\smallskip

In our neuronal network model, the state of each neuron $i$ is described by a $\R_+$-valued variable $X^{i,N}_t$ corresponding to the time elapsed since last discharge. Of course, this approach is quite different from classical literature, where the key variable is the voltage: this is an important originality of the PPS model. Neurons interact through the emission and reception of action potentials (or spikes), which are fast stereotyped trans-membrane current. In the PPS model, the spiking rate essentially depends on the global activity $M$ of the network. Specifically, a neuron with age $x$ (duration since it fired its last spike) fires a spike with an instantaneous intensity $a(x,M)$ where $M$ is the global activity of the network. Subsequently to the spike emission, two things happen: the age of the spiking neuron is reset to 0, and the global variable $M$ increases its value by an extra value of $\eps/N$. The coefficient $\eps$ represents the mean connectivity of the network. When no spikes occur, the global variable $M$ is supposed to decay exponentially to 0 at a constant rate $\alpha$.
\smallskip

For each $N\in\N$, let us consider a family $(\NN^1_t,\ldots,\NN^N_t)_{t\geq0}$ of i.i.d. standard Poisson processes. Let us also consider a family $(\tau_1,\ldots,\tau_N)$ of i.i.d. real valued random variables with probability law $b$. These coefficients represent delays in the transmission of information from the cell to whole network. Furthermore, we assume that the family of delays is independent of the Poisson processes. Typical examples are 
$$
b =  \, \delta_{\tau}(ds),\quad\tau\geq0 \qquad\hbox{or}\qquad
b =  \, c\,e^{-cs}\, ds,\quad c>0.
$$
A special case is when $\tau=0$, in that context the network is said to be in a no delay regime.
\smallskip

Throughout the paper we assume chaotic initial conditions, in the sense that the initial state of the neurons are independent and identically distributed random variables. Therefore, for $g_0$ and $m_0$ two independent probability measures on $\R_+$, $(g_0,m_0)$-\textit{chaotic initial states} consists in setting i.i.d. initial conditions for all neurons with common law equal to $g_0$, and setting independently, for the global activity, another random variable distributed as $m_0$.
\smallskip

% In what follows, we note $\mathcal C$ the set of  $\R_+$-valued continuous functions on $[-\tau,T]$, $\mathcal M(\mathcal C)$ the set of probability distributions on $\mathcal C$, and $\mathcal M^2(\mathcal C)$ the space of square-integrable processes. Also, we consider $\mathcal M_\tau=\mathcal M^2_\tau(C([-\tau, 0],\R_+))$ the space of probability distributions on $[-\tau, 0]$ with values in $\R_+$, and $\mathcal M^2_\tau=\mathcal M^2_\tau(C([-\tau, 0],\R_+))$ the space of square-integrable processes. 
%\smallskip
Our aim is to understand the convergence of the $\R_+$-valued Markov processes $$(X^N_t)_{t\geq0}=(X^{1,N},\ldots, X^{N,N}_t)_{t\geq0},$$ solving, for each $i=1,\ldots,N$ and any $t\geq0$:
\begin{equation}\label{eq:syspar-X}
%\begin{cases}
%  \loi(X^{i,N}_0) = g_0,
%  \\
  X^{i,N}_t = X_0^{i,N}+t- \int_0^tX^{i,N}_{s-}\int_{0}^\infty \indiq_{\{u\leq a(X^{i,N}_{s-},M_{s-}^N)\}}\,\NN^i(du,ds),
%  \end{cases}
\end{equation}
with the coupling given by the global variable
 \begin{equation}\label{eq:syspar-M}
% \begin{cases}
%  \loi(M_t^N) = m_0,
%  \\
  M_t^N = M_0^N-\alpha\Big[\int_0^t M_s^N\,ds -\frac{\eps}{N}\sum_{j=1}^N \int_{0}^{t}\int_{0}^\infty  \indiq_{\{u\leq a(X^{j,N}_{s_--\tau_j},M_{s_--\tau_j}^N)\}}\,\tilde\NN^j(du,ds)\Big],
%  \end{cases}
\end{equation}
where $\tilde\NN^j_t$ is the shifted (in time) process $\NN^j_{t-\tau_j}$ extended by 0 for negative values of the time. These processes are a consistency restriction on the spiking times: \textit{when a neuron $j$ sends a signal at a time $t\geq0$, the global variable receives it only at instant $t+\tau_j$}. 

We emphasise that it is necessary to impose that the delays are compactly supported, i.e. on an interval of the type $[-\tau,0]$ for some fixed $\tau$. Otherwise the processes $X^{i,N}_t$ would depend on their whole past trajectory. Equations~\eqref{eq:syspar-X}-\eqref{eq:syspar-M} are stochastic differential equations on the infinite-dimensional space of continuous functions from $[-\tau,0]$ to $\R_+$, i.e. on the variables $\tilde X_t=(X_s,s\in [t-\tau,t])$ (see e.g. Da Prato-Zabczyk~\cite{da2014stochastic}).

%We make a compactness assumption on the delays, meaning that they lie inside a compact: $\exists \, \tau > 0,$ for any $I\subset\R_+$ such that $I\cap[0,\tau]=\emptyset$, it holds $b(I)=0$. 
%Typical examples are 
%$$
%b(ds) =  \, \delta_{s = \tau} \quad\hbox{or}\quad
%b(ds) =  \, \frac{ce^{-cs}}{1-e^{-c\tau}} \, \indiq_{s\in[0,\tau]} \, ds,\, c>0.
%$$
Finally, we make the following physically reasonable assumption on the intensity spike function of the system: 
\begin{equation}\label{eq:syspar-H1}
\begin{cases}
 a(\cdot,\cdot)\text{ is a continuous non decreasing function in both variables,}
\\
 a(0,\cdot)\,=\,0,\quad a(\cdot,0)>0
\\
 a(x,m)\xrightarrow{x\rightarrow\infty} \infty,\quad\forall\,m\in \overline\R_+,
\end{cases}
\end{equation}
and impose a second consistency restriction
\begin{equation}\label{eq:syspar-H3}
 (\forall\,\delta>0)(\exists\, x^*_\delta>0)\text{ such that }a(x,m)\leq \delta,\quad\forall\, m\in\R_+
\end{equation}
representing that, independently of the level of the network activity, a neuron cannot spike two times in an arbitrary small period of time.  To fix ideas, recall the expression of the refractory intensity function of Pakdaman-Perthame-Salort~\cite{MR2576373}), given by
\begin{equation}\nonumber
%\label{eq:syspar-H2}
\begin{cases}
 a(x,m)=0\text{ for }x\in(0,x^*(m)),
 \\
  a(x,m)>0\text{ for }x>x^*(m),
  \\
 \frac{d}{dm}x^*(m)\,<\,0,\quad x^*(0)\,=\,x^*_+,\quad x^*(\infty)=x^*_->0,
\end{cases}
\end{equation}
where $x^*(m)$ represents a refractory period, $x^*_+$ the spontaneous activity due to noise, and $x^*_-$ the minimal age necessary to spike.

For the previous setting, we have directly the

\begin{prop}\label{prop:syspar-WP}
 Under hypotheses~\eqref{eq:syspar-H1} and~\eqref{eq:syspar-H3}, let $N\geq1$ be fixed and assume that a.s.,
 $$
 \max_{1\leq i\leq N}X_0^{i,N}<\infty,\qquad M^N_0<\infty,
 $$ 
 then there exists a unique \textit{c\`adl\`ag} adapted strong $\R_+$-valued solution $(X^{N}_t,M^N_t)_{t\geq0}$ to~\eqref{eq:syspar-X}-\eqref{eq:syspar-M}.
\end{prop}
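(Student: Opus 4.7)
The plan is a pathwise construction of the solution by induction over the successive jump times, together with a non-explosion argument that is the main difficulty. Fix a realization of the Poisson processes $(\NN^j)_{1 \leq j \leq N}$, of the delays $(\tau_j)$, and of the initial state, and set $T_0 = 0$. Between two consecutive jump times $T_n$ and $T_{n+1}$ the system reduces to the ODEs $\dot X^{i,N}_t = 1$ and $\dot M^N_t = -\alpha M^N_t$, which admit a unique explicit solution; the compact support of the delays ensures that at any time the list of pending delayed arrivals is finite and determined by the past trajectory on $[t - \tau_{\max}, t]$. Then $T_{n+1}$ is defined as the first $s > T_n$ at which either some $\NN^j$ fires a mark $(u,s)$ with $u \leq a(X^{j,N}_{s-}, M^N_{s-})$ (in which case $X^{j,N}$ resets to $0$), or a pending delayed spike reaches $M^N$ (in which case $M^N$ jumps by $\alpha\eps/N$). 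Since the state is locally bounded, so is the total intensity, hence $T_{n+1} > T_n$ almost surely and the iteration produces a unique c\`adl\`ag adapted solution up to the explosion time $T_\infty := \sup_n T_n$.

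It remains to show that $T_\infty = \infty$ almost surely. Fix $T > 0$. One has the deterministic bound $X^{i,N}_t \leq X_0^{i,N} + T$ for $t \leq T$, while each delayed arrival contributes $\alpha\eps/N$ to $M^N$, so also $M^N_t \leq M_0^N + (\alpha\eps/N)\,K_t$, where $K_t$ is the total number of spikes fired by the network in $[0,t]$. It thus suffices to show $K_T < \infty$ a.s. The difficulty is that without an a priori control on $M^N$ the intensities $a(X^{i,N}_\cdot, M^N_\cdot)$ cannot be bounded uniformly, and $M^N$ itself depends on $K_t$; assumption~\eqref{eq:syspar-H3} -- which I read as $a(x,m) \leq \delta$ for $x \leq x^*_\delta$ uniformly in $m \in \R_+$, in agreement with the refractory interpretation given after it -- is precisely what breaks this circularity.

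Indeed, choose $\delta_0$ with $\delta_0 x^*_{\delta_0} \leq 1/2$. Immediately after any spike of neuron $i$ at a time $\tau$, one has $X^{i,N}_{\tau+u} = u$ for $u \in [0, x^*_{\delta_0}]$ as long as no further spike of $i$ has occurred, so the intensity $a(X^{i,N}_{\tau+u}, M^N_{\tau+u}) \leq \delta_0$ on this window regardless of the value of $M^N$. The strong Markov property applied to $\NN^i$ then gives $\Pp(S^i_{k+1} > h \mid \FF_{\tau^i_k}) \geq e^{-\delta_0 h}$ for every $h \leq x^*_{\delta_0}$, where $S^i_{k+1}$ is the $(k+1)$-th inter-spike interval of neuron $i$. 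Hence $S^i_{k+1}$ stochastically dominates $\min(E/\delta_0, x^*_{\delta_0})$ for an independent standard exponential $E$, and a standard coupling provides i.i.d.\ lower bounds showing that $N^i_T$, the number of spikes of neuron $i$ in $[0,T]$, is dominated by the count of a renewal process with positive inter-arrival mean $(1 - e^{-\delta_0 x^*_{\delta_0}})/\delta_0$, hence is almost surely finite. Summing over $i = 1, \ldots, N$ yields $K_T < \infty$ a.s., so $M^N$ stays bounded on $[0,T]$, no explosion occurs before $T$, and letting $T \to \infty$ produces the unique global c\`adl\`ag adapted strong solution.
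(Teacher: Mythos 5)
Your construction between jump times is exactly the paper's (the paper dispatches it in one sentence before the proof), but your non-explosion argument takes a genuinely different route. The paper works pathwise from the identity for the empirical mean $\bar X^N_t$: since $\bar X^N_t\geq 0$, the age-weighted jump count $\frac1N\sum_i\int_0^t X^{i,N}_{s-}\indiq_{\{u\leq a\}}\NN^i(du,ds)$ is bounded by $\bar X_0^N+t$, so each spike fired at age $\geq x^*_\delta$ "consumes" at least $x^*_\delta$ of a deterministically bounded budget, while spikes fired at age $<x^*_\delta$ have intensity $\leq\delta\leq 1$ by~\eqref{eq:syspar-H3} and are dominated by a rate-one Poisson count $Z^N_t$; this yields the explicit bounds~\eqref{eq:bound-Exp} and~\eqref{eq:bound-M}. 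You instead use~\eqref{eq:syspar-H3} to manufacture a refractory window after each reset, deduce that inter-spike intervals of each neuron stochastically dominate i.i.d.\ copies of $\min(E/\delta_0,x^*_{\delta_0})$, and conclude by renewal theory; your treatment of the Poisson thinning (no atom of $\NN^i$ in $[0,\delta_0]\times(\tau,\tau+h]$ implies no spike, independently of $\FF_\tau$ and of the other neurons driving $M^N$) is sound, and the one interval not covered by the domination — the first, started from a possibly large initial age — costs only one extra spike. Both arguments are correct and both hinge on~\eqref{eq:syspar-H3}; yours is a clean, self-contained probabilistic argument for almost-sure non-explosion, whereas the paper's version has the advantage of producing the quantitative pathwise estimates \eqref{eq:bound-Exp}--\eqref{eq:bound-M} that are reused later (in Lemma~\ref{lem:apriori-PS2} and the tightness proof of Proposition~\ref{prop:meanfield-Tight}), so if you adopted your route you would still need to extract comparable moment bounds from the renewal domination for the rest of the paper to go through.
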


%\subsection{The limit equation} 

Under suitable conditions (to be preciser later on) we show that the solution $(X_t^N)_{t\geq0}$ behave, for large values of $N$, as $N$ independent copies of the solution to a \textit{nonlinear} SDE. Let $Y_0$ (respectively $M_0$) be a $g_0$-distributed random variable (resp. $m_0$) and $\NN_t$ a standard Poisson process independent of $Y_0$ and $M_0$. Then we look for $\R_+$-valued \textit{c\`adl\`ag} adapted process $(Y_t,M_t)_{t\geq0}$ solving for any $t\geq0$
 \begin{equation}\label{eq:meanfield-Y}
% \begin{cases}
%  \loi(Y_0)=g_0,
%  \\
  Y_t = Y_0+t- \int_0^tY_{s-}\int_{0}^\infty \indiq_{\{u\leq a(Y_{s-},M_{s-})\}}\,\NN(du,ds),
%  \end{cases}
\end{equation}
and
 \begin{equation}\label{eq:meanfield-M}
% \begin{cases}
%  \loi(M_0)=m_0,
%  \\
  M_t = M_0-\alpha\Big[\int_0^t M_s\,ds -\eps\int_0^t\int_0^s\Ee[a(Y_{s-w},M_{s-w})]\,b(dw)ds\Big].
%  \end{cases}
\end{equation}

\begin{rem}
Let us assume for a moment the hypothesis of instantaneous membrane decay, i.e., $\alpha$ is going to infinity. In that case equation~\eqref{eq:meanfield-M} writes
$$
  M_t\,\,=\,\,\eps\int_0^t\Ee\big[ a( X_{t-w}, M_{t-w})\big]\,b(dw),
$$
in particular, $M_t$ is a deterministic function of $t$. Then, if the probability density of $X_t$ can be written as $f(t,x)\,dx$, the previous relation is reduced to
$$
 M(t) \,\,=\,\,\eps\int_{0}^t\int_0^\infty a(x, M(t-w))\,f(t-w,x)\,dx\,b(dw).
$$
Coming back to equation~\eqref{eq:meanfield-Y}, we have that 
%for any bounded regular function $\varphi$, the infinitesimal generator $\AA$ of $X_t$ is given by
%$$
% \AA\, \varphi(t,x)\,\,:=\,\, \frac{\partial\varphi}{\partial t}+\frac{\partial\varphi}{\partial x}+a(x,M(t))\,\big(\varphi(t,0)-\varphi(t,x)\big),
%$$
%concluding that 
$f(t,x)$ solves (at least in the weak sense) the PDE
 \begin{equation}\nonumber
 \begin{cases}
 \displaystyle\phantom{\Big[}\frac{\partial f}{\partial t} +\frac{\partial f}{\partial x}+a(x,M(t))f(t,x)\,=\,0,
  \\
 \phantom{\Big[}N(t)\,:=\,f(t,x=0) =\int_{0}^\infty a(x, M(t))\,f(t,x)\,dx,
 \end{cases}
 \end{equation}
which is exactly the model that motivates our study.
\end{rem}

The nonlinear SDE is clearly well-posed if we, for instance, make a Lipschitz continuity assumption on the intensity function. In order to avoid this simplification, here we try to use the approaches of Fournier-L{\"o}cherbach~\cite{Fournier:2014nr} and/or Robert-Touboul~\cite{Robert:2014jk}. Nevertheless, we are not able to provide a very sharp result and we have to restrict ourselves to the case of bounded exponential moments. We introduce the quantity
\begin{equation}\nonumber
 \|U_t\|_T\,=\,\sup\{|U_t|,\,\,0\leq t\leq T\},
\end{equation}
which is well defined for locally bounded processes. The second natural result of the manuscript is

\begin{theo}\label{th:meanfield-Wellpossed}
 Let us assume that hypotheses~\eqref{eq:syspar-H1}-\eqref{eq:syspar-H3} hold, then there exists a weak solution $(Y_t,M_t)_{t\geq0}$ to~\eqref{eq:meanfield-Y}-\eqref{eq:meanfield-M} such that
\begin{equation}\label{eq:delay}
 \int_0^t\int_0^s\Ee\big[a(Y_{s-w},M_{s-w})\big]\,b(dw)\,ds<\infty,\qquad \forall\, t\geq0.
\end{equation}
  Moreover, if the law of $(Y_0,M_0)$ is compactly supported, then there exists a unique strong solution $(Y_t,M_t)_{t\geq0}$ to~\eqref{eq:meanfield-Y}-\eqref{eq:meanfield-M} in the class of functions such that there are deterministic locally bounded functions $A,B:\R_+\mapsto\R_+$ such that a.s.
 \begin{equation}\label{eq:locallyHyp}
 \|Y_t/A(t)\|_T<\infty,\quad \|M_t/B(t)\|_T<\infty,\qquad \forall T\,\geq0.
 \end{equation}
 \end{theo}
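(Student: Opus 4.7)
My plan is to separate the proof into existence by approximation and uniqueness by coupling inside the class \eqref{eq:locallyHyp}. For existence, I first replace the intensity by the bounded truncation $a_K(x,m):=a(x,m)\wedge K$ and solve the resulting nonlinear SDE by a Picard scheme on the deterministic mean activity $\mu(t):=\Ee[a_K(Y_t,M_t)]$. Given $\mu$, equation \eqref{eq:meanfield-M} becomes a linear ODE with the explicit solution $M_t = M_0 e^{-\alpha t} + \alpha\eps \int_0^t e^{-\alpha(t-s)}(b\ast\mu)(s)\,ds$, and given this $M$, equation \eqref{eq:meanfield-Y} defines $Y$ as a standard thinning of the Poisson measure with bounded rate $a_K$. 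Composing yields an update $\Phi\mu(t):=\Ee[a_K(Y_t,M_t)]$ which is a contraction on $L^\infty([0,T])$ for $T$ small, giving a unique truncated solution $(Y^K,M^K)$ that I then extend to $\R_+$ by iteration.

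To pass to the limit $K\to\infty$ I rely on the pathwise bound $Y^K_t\leq Y_0+t$ (since jumps only decrease $Y$) together with \eqref{eq:syspar-H3}: for every $\delta>0$ one has
\[
\Ee[a_K(Y^K_t,M^K_t)] \leq \delta + \Ee\bigl[a_K(Y^K_t,M^K_t)\,\indiq_{\{Y^K_t>x^*_\delta\}}\bigr],
\]
and the tail term is handled by a renewal-type estimate combined with the ODE bound on $M^K_t$, producing a Gronwall inequality for $\mu^K$ and a uniform $L^\infty([0,T])$ control. Aldous' criterion then yields tightness of $\{\loi(Y^K,M^K)\}_K$ in $\mathbb{D}(\R_+,\R_+^2)$, and any accumulation point solves \eqref{eq:meanfield-Y}--\eqref{eq:meanfield-M} and satisfies \eqref{eq:delay}, establishing weak existence.

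For uniqueness with compactly supported $(Y_0,M_0)$, the bound $Y_t\leq Y_0+t\leq R+t=:A(t)$ is automatic and substitution into \eqref{eq:meanfield-M} gives a deterministic envelope $M_t\leq B(t)$ for any solution in the class \eqref{eq:locallyHyp}, so that $a$ is uniformly continuous on $[0,A(T)]\times[0,B(T)]$ with some modulus $\omega_T$, which I may take to be concave. Coupling two solutions $(Y^i,M^i)$, $i=1,2$, through the same Poisson measure and with identical initial data, a thinning computation gives
\[
\Ee[|Y^1_t-Y^2_t|] \leq 2A(T)\int_0^t \Ee\bigl[|a(Y^1_s,M^1_s)-a(Y^2_s,M^2_s)|\bigr]\,ds,
\]
and combining this with the linear ODE for $M^1_t-M^2_t$ one obtains, for $\phi(t):=\Ee[|Y^1_t-Y^2_t|]+|M^1_t-M^2_t|$, the inequality $\phi(t)\leq C_T\int_0^t \omega_T(\phi(s))\,ds$, which forces $\phi\equiv 0$ by the Osgood--Bihari lemma. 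The main obstacle is the uniform control of $\mu^K$ as $K\to\infty$, since \eqref{eq:syspar-H1} leaves $a(x,m)$ unbounded at large $x$: it is precisely this step that forces \eqref{eq:delay} to be produced as an output of the construction rather than an assumption, and that confines the uniqueness statement to the class \eqref{eq:locallyHyp}.
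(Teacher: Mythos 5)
Your construction is genuinely different from the paper's: the paper obtains the weak solution as a limit point of the empirical measures of the particle system (tightness via Aldous' criterion in Proposition~\ref{prop:meanfield-Tight}, identification of limit points via the martingale problem in Proposition~\ref{prop:meanfield-Sol} and Lemma~\ref{lem:conditions-T}), whereas you build it directly by truncating $a$ and running a fixed-point scheme. This would be a more self-contained route, but two of its steps fail under the stated hypotheses~\eqref{eq:syspar-H1}--\eqref{eq:syspar-H3}, which only provide \emph{continuity} of $a$.

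First, the Picard step. Truncation makes $a_K$ bounded but not Lipschitz, and your map $\Phi$ acts through the law of the thinned process $Y$. Coupling the two thinnings associated with $\mu^1,\mu^2$ through the same Poisson measure, the processes $Y^1,Y^2$ agree until one of them jumps and the other does not, so the best available estimate is of the form
\begin{equation}\nonumber
\Ee\big[|a_K(Y^1_t,M^1_t)-a_K(Y^2_t,M^2_t)|\big]\;\leq\;\Ee\big[\omega\big(|M^1_t-M^2_t|\big)\big]\;+\;2K\,\Pp\big(Y^1_t\neq Y^2_t\big),
\end{equation}
where $\omega$ is a modulus of continuity of $a_K$ that is uniform only for the first argument in a compact set; in the existence part $g_0$ is not assumed compactly supported, so $Y_t\leq Y_0+t$ gives no deterministic confinement. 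Even granting uniform continuity, this produces an Osgood-type integral inequality, not the strict contraction $\|\Phi\mu^1-\Phi\mu^2\|_\infty\leq c\,\|\mu^1-\mu^2\|_\infty$ with $c<1$ that your argument requires. Second, and more seriously, the Osgood--Bihari conclusion in your uniqueness step needs the Dini condition $\int_{0^+}du/\omega_T(u)=+\infty$. A concave modulus of continuity of a merely continuous (or H\"older) function need not satisfy it --- for $\omega_T(u)=u^{1/2}$ the integral converges and uniqueness of the integral inequality genuinely fails --- so $\phi(t)\leq C_T\int_0^t\omega_T(\phi(s))\,ds$ does not force $\phi\equiv0$. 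The paper closes this step (Proposition~\ref{prop:UniqBounded}) by using that $a$ is Lipschitz on the compact set $[0,A(T)]\times[0,B(T)]$ (it invokes differentiability of $a$), which reduces everything to Gronwall's lemma; you need the same additional regularity, at which point the Osgood detour is unnecessary and your truncated Picard map does become contractive for small $T$ --- but then your proof no longer covers the hypotheses under which the theorem is stated, which is precisely why the paper resorts to the compactness method for existence.
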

 \bigskip
 
 The uniqueness result still hold true if the initial datum has a fast decay at infinite. More precisely, let us consider the growing restriction
\begin{equation}\label{eq:polyn}
 (\exists\,\xi>2)\,(\exists\,0<\rho<1)\,(\exists\,C_\xi,c_\rho>0)\,:\,c_\rho\, x^{\frac{1+\rho}{1-\rho}}\leq a(x,m)\leq C_\xi(1+x^{\xi-2}+m^{\xi-2}),
\end{equation}
 and suppose that initial condition has bounded exponential moments
 \begin{equation}\label{eq:Y0expomoments}
  \Ee\big[e^{\omega( Y^{\xi}+M^{\xi}  )}\big]<\infty,\qquad\omega>0.
 \end{equation}
   
 \begin{theo}\label{th:meanfield-Wellpossed2}
 There exists a unique strong solution $(Y_t,M_t)_{t\geq0}$ to~\eqref{eq:meanfield-Y}-\eqref{eq:meanfield-M} in the class of functions of locally bounded exponential moments~\eqref{eq:Y0expomoments}.
% such that there is a positive constant $\omega$ and locally bounded $\R_+$-valued functions $A(t),B(t)$, such that
% \begin{equation}\label{eq:expoDecay}
% \Pp(Y_t\geq K)\leq \omega\exp\big(-g_a(K-A(t))\big),\qquad\Pp(M_t\geq K)\leq \omega\exp\big(-g_a(K-B(t))\big).
% \end{equation}
\end{theo}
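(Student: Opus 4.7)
The plan is to reduce Theorem~\ref{th:meanfield-Wellpossed2} to the compactly-supported case of Theorem~\ref{th:meanfield-Wellpossed} by propagating the exponential moment~\eqref{eq:Y0expomoments} and localising. The crucial observation is that $Y_t\le Y_0+t$ pathwise from~\eqref{eq:meanfield-Y}, since jumps reset $Y$ to zero and the drift is unit, so the only genuinely unbounded quantity to control is $M_t$.

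\smallskip

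\textbf{Step 1 (Moment propagation).} The pathwise bound on $Y$ gives $\Ee[Y_t^k]\le C_k(\Ee[Y_0^k]+t^k)<\infty$ for every $k$. For $M$, differentiating~\eqref{eq:meanfield-M} and using the upper bound $a(x,m)\le C_\xi(1+x^{\xi-2}+m^{\xi-2})$ from~\eqref{eq:polyn}, one obtains
\[
\frac{dM_t}{dt}\,\le\, -\alpha M_t+C_T\Big(1+t^{\xi-2}+\sup_{r\le t}\Ee[M_r^{\xi-2}]\Big),
\]
a delay-type inequality which closes by Gr\"onwall on every $[0,T]$, yielding $\sup_{t\le T}\Ee[M_t^k]<\infty$ for every $k$. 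Combining with the deterministic bound on $Y$ then upgrades polynomial control into the exponential moment~\eqref{eq:Y0expomoments}, locally uniformly in $t$.

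\smallskip

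\textbf{Step 2 (Strong existence).} Truncating $(Y_0^R,M_0^R):=(Y_0\wedge R,M_0\wedge R)$, Theorem~\ref{th:meanfield-Wellpossed} gives a unique strong solution $(Y^R,M^R)$ in the class~\eqref{eq:locallyHyp}. Step~1 provides $R$-uniform moment bounds, hence tightness of the laws in the Skorokhod path space. Any subsequential limit $(Y,M)$ is a weak solution inheriting~\eqref{eq:Y0expomoments}, and combined with the pathwise uniqueness of Step~3 the Yamada--Watanabe principle promotes it into a strong solution.

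\smallskip

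\textbf{Step 3 (Pathwise uniqueness).} Let $(Y,M)$ and $(Y',M')$ be two strong solutions in the class~\eqref{eq:Y0expomoments} driven by the same Poisson noise $\NN$ from the same initial datum. Since $Y_t,Y'_t\le Y_0+t$ are already bounded, only $M$ needs localising. Setting
\[
\tau_R:=\inf\{t\ge 0\,:\,M_t\vee M'_t\ge R\},
\]
on $[0,\tau_R\wedge T]$ the intensity $a$ acts on the compact set $[0,Y_0+T]\times[0,R]$, on which it is bounded and uniformly continuous. The coupling argument from the compactly-supported case of Theorem~\ref{th:meanfield-Wellpossed} then applies and forces the two solutions to coincide on $[0,\tau_R\wedge T]$. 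Step~1 ensures $\tau_R\to\infty$ almost surely as $R\to\infty$, whence $(Y,M)\equiv(Y',M')$.

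\smallskip

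The main obstacle is Step~1: the coupling of $M$ back into itself through the delay convolution $\int_0^t\Ee[a(Y_{t-w},M_{t-w})]\,b(dw)$ means that the differential inequality for moments of $M$ is only closed by an iteration, using the fact that $b$ is a probability measure and the pathwise bound on $Y$ to trade time for moment growth. The polynomial upper bound in~\eqref{eq:polyn} is precisely what makes this iteration converge, and the exponential-moment hypothesis~\eqref{eq:Y0expomoments} is strong enough to absorb the resulting polynomial growth. The role of the superlinear lower bound in~\eqref{eq:polyn}, by contrast, is to guarantee the a priori estimates used inside Theorem~\ref{th:meanfield-Wellpossed} and thus indirectly in Steps~2 and~3.
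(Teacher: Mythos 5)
There is a genuine gap, and it sits in your Step~3. The equation~\eqref{eq:meanfield-M} is of McKean--Vlasov type: the drift of $M$ contains the \emph{full expectation} $\Ee[a(Y_{s-w},M_{s-w})]$, a deterministic function of time computed over the whole probability space. Stopping at $\tau_R=\inf\{t:M_t\vee M'_t\ge R\}$ does not localise this term: to compare two solutions on $[0,\tau_R\wedge T]$ you must still estimate $\Ee\big[|a(Y_s,M_s)-a(Y'_s,M'_s)|\big]$ \emph{without} restriction to the event where the paths stay below $R$, and on the complement of that event the "Lipschitz on compacts'' bound is unavailable. So the compactly-supported coupling argument does not "apply on $[0,\tau_R\wedge T]$'' as you claim, and your localisation does not close. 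This is precisely why the paper's Proposition~\ref{prop:UniqBounded2} does something different: it splits each expectation with the indicator of $\EE(Y,M)=\{\sup_{s\le t}Y_s\le R,\ \sup_{s\le t}M_s\le R\}$, controls the good part using the structural hypothesis~\eqref{eq:syspar-H4} (which you never invoke, but which is needed to write $|a(Y,M)-a(Y',M')|\le C_0(a(Y,M)+a(Y',M'))|Y-Y'|+C_0|M-M'|$), controls the bad part by H\"older together with the tail bounds $\Pp(Y_t\ge R)\le Ce^{-\omega(R-t)^\xi}$, applies Gr\"onwall at fixed $R$, and only then sends $R\to\infty$ using the balance $\exp(C_TR\,a(R,R)-\tfrac{\omega}{2}R^\xi)\to0$ guaranteed by the upper bound in~\eqref{eq:polyn}. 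The choice of $R$ is a final optimisation, not a localisation of the dynamics.

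Your Step~1 also contains a soft spot: finiteness of all polynomial moments of $M_t$ does not "upgrade'' to the exponential moment~\eqref{eq:Y0expomoments} --- that implication is false without quantitative control of the constants in $k$. It is also unnecessary: the paper's Lemma in Section~\ref{sec:Uniqueness} gives the \emph{almost sure pathwise} bounds $Y_t\le Y_0+t$ and $M_t\le M_0+C_2'(t+\Ee[Y_0])$, so the exponential moment at time $t$ follows directly from the one at time $0$ (for a possibly smaller $\omega$), and these same pathwise bounds are what produce the tail estimates used in the uniqueness step. You noticed the bound on $Y$ but missed that $M$ enjoys one too, which is why your Step~1 resorts to an avoidable moment iteration. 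Finally, note that the paper treats existence in this class only by the remark that it is "similar and therefore omitted''; your tightness-plus-Yamada--Watanabe outline for Step~2 is a reasonable sketch of that omitted part, but it is contingent on a correct pathwise uniqueness proof, which is the part that fails above.
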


%\subsection{Chaoticity of the system} 

Finally we analyse the chaoticity of the system. To do so, a few more notations must be introduced. We denote by $\mathbb D(\R_+^2)$ the set of c\`adl\`ag functions on $\R_+^2$ endowed with the topology of the convergence on compact time intervals. By definition, each pair $(X_t^{i,N},M_t^N)_{t\geq0}$ belongs to $\mathbb D\mathbb(\R_+^2)$, and then the sequence of empirical measures 
  \begin{equation}\nonumber
  \mu_N=N^{-1}\sum_{i=1}^N\delta_{\{(X_t^{i,N},M_t^N)_{t\geq0}\}},
  \end{equation}
  is well defined and belongs to $\Pp(\mathbb D(\R_+^2))$. 
  
  Note that we intentionally use a two dimensional variable in the definition of $\mu_N$, by repeating $M^N$ in each pair which provides one way to deal with the issue of exchangeability\footnote{One could think of other ways to deal with this issue. For instance, we could define a set of auxiliary variables $M^{i,N}$ each of them solving the equation~\eqref{eq:syspar-M} with different i.i.d. initial conditions.}. The third and last main result of the manuscript is
\begin{theo}\label{th:chaoschaos}
 Let us assume that hypotheses~\eqref{eq:syspar-H1}-\eqref{eq:syspar-H3} hold, and that the law of $(Y_0,M_0)$ is compactly supported, then the sequence of empirical processes $\mu_N(t)$ converges in distribution to the law of the unique process $(Y_t,M_t)_{t\geq0}$ with $(g_0,m_0)$-chaotic initial states solution to~\eqref{eq:meanfield-Y}-\eqref{eq:meanfield-M}. 
 
 If the initial datum has a fast decay (in the sense described in Theorem~\ref{th:meanfield-Wellpossed2}), and if moreover there is a positive constant $C_0$ such that
% \begin{equation}\label{eq:syspar-H4}
% C_0:=\sup_{(x,m)\in\R^2_+} \Big[\frac{\partial_x\, a(x,m)}{a(x,m)}+\partial_ma(x,m)\Big]<\infty,
%\end{equation}
 \begin{equation}\label{eq:syspar-H4}
 |a(x,m)-a(x',m')|\,\leq\, C_0\big[a(x,m)\wedge a(x',m')\,|x-x'|+|m-m'|\big],
\end{equation}
for all $x,x',m,m'\in\R_+$. Then the convergence of $\mu_N(t)$ remains true. 

In the weak connectivity case, i.e. $\eps\in[0,\eps_0)$ for $\eps_0$ small enough, hypothesis~\eqref{eq:syspar-H4} can be replaced by 
\begin{equation}\label{eq:syspar-H4prima}
% C_0:=\sup_{(x,m)\in\R^2_+} \Big[\frac{\partial_x\, a(x,m)}{a(x,m)}+\frac{\partial_m\, a(x,m)}{a(x,m)}\Big]<\infty.
 |a(x,m)-a(x+h,m+h)|\,\leq\, C_0\,a(x,m)\,h,
\end{equation}
for all $x,m\in\R_+$ and any $h\in[0,1]$.
\end{theo}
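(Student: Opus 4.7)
I would follow the classical three-step scheme for propagation of chaos: (i) prove tightness of the laws of $\mu_N$ in $\Pp(\mathbb D(\R_+^2))$; (ii) show that any weak subsequential limit is concentrated on solutions of the nonlinear SDE \eqref{eq:meanfield-Y}-\eqref{eq:meanfield-M}; (iii) invoke the uniqueness statements of Theorems~\ref{th:meanfield-Wellpossed} and~\ref{th:meanfield-Wellpossed2} to conclude that the limit is $\mathrm{Law}(Y,M)$. To make step (ii) tractable despite the non-Lipschitz intensity, I would also set up a synchronous coupling in parallel, introducing $N$ i.i.d. copies $(Y^i_t,M^i_t)_{t\ge 0}$ of the nonlinear process that share the Poisson drivers $\NN^i$, the delays $\tau_i$, and the initial condition $Y^i_0=X^{i,N}_0$.

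\textbf{Tightness.} In the compactly supported regime, hypothesis \eqref{eq:syspar-H3} makes $a$ uniformly bounded on the deterministic strip $\{x\le\max_i X^{i,N}_0+T\}\times\R_+$; combined with the trivial upper bound $X^{i,N}_t\le X^{i,N}_0+t$ and \eqref{eq:syspar-M}, this gives a deterministic $N$-uniform bound $M^N_t\le C(T)$. Aldous's tightness criterion, applied to the jump rates of each $X^{i,N}$ and to the variation of $M^N$, then yields tightness of the empirical measures. For the fast-decay case, I would propagate the exponential moment assumption \eqref{eq:Y0expomoments} to the particle system using the polynomial upper bound in \eqref{eq:polyn}: a Dynkin-type computation of $\Ee[e^{\omega((X^{1,N}_t)^\xi+(M^N_t)^\xi)}]$ should yield a local-in-time bound uniform in $N$, which in turn provides tightness together with the uniform integrability needed afterwards.

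\textbf{Identification and weak connectivity.} For any limit point $\mu_\infty$, standard martingale manipulations show that the compensator of the jump part of the canonical $X$-process is $\int_0^t a(X_s,M_s)\,ds$ and that the global-activity component solves the ODE with source $\frac{1}{N}\sum_j a(X^{j,N}_{s-\tau_j},M^N_{s-\tau_j})$. Passing to the limit in this empirical source reduces to proving
\[
\frac{1}{N}\sum_{j=1}^N a(X^{j,N}_{s-\tau_j},M^N_{s-\tau_j}) \longrightarrow \int_0^s\Ee[a(Y_{s-w},M_{s-w})]\,b(dw),
\]
which, thanks to the modulus of continuity \eqref{eq:syspar-H4} and the moment bounds above, follows from a law-of-large-numbers statement on the coupled i.i.d. copies $(Y^j,M^j)$ once the coupling error is controlled by a Gr\"onwall loop. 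In the weak-connectivity regime $\eps\in[0,\eps_0)$, the difference $\|M^N-M\|_T$ can be kept below $1$ with high probability on compact time intervals, which places us within the range of validity of \eqref{eq:syspar-H4prima}; a Gr\"onwall loop based on this local inequality then substitutes for the stronger \eqref{eq:syspar-H4}, and uniqueness from Theorem~\ref{th:meanfield-Wellpossed2} completes the argument.

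\textbf{Main obstacle.} The delicate point is the interplay between the delay in \eqref{eq:syspar-M} and the lack of a global Lipschitz bound for $a$. The delay forces us to control trajectories over the entire history window $[-\tau,t]$ rather than pointwise, while the non-Lipschitz intensity prevents closing the coupling error by a naive estimate. The key devices, which I would exploit throughout, are \eqref{eq:syspar-H4} (respectively \eqref{eq:syspar-H4prima}), which express the modulus of continuity of $a$ in terms of $a$ itself, combined with the exponential moment propagation inherited from Theorem~\ref{th:meanfield-Wellpossed2}; these together provide integrable majorants for every quantity entering the Gr\"onwall loop.
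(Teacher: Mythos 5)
Your overall architecture (Aldous tightness, martingale-problem identification, uniqueness from Theorems~\ref{th:meanfield-Wellpossed}--\ref{th:meanfield-Wellpossed2}, plus a synchronous coupling sharing the drivers $\NN^i$, the delays and the initial data) is the same as the paper's. But there is a genuine gap in your treatment of the fast-decay case: the uniform-in-$N$ bound on $\Ee\big[e^{\omega((X^{1,N}_t)^\xi+(M^N_t)^\xi)}\big]$ that you propose to obtain by a Dynkin computation does not exist, because this expectation is infinite for every fixed $N$ as soon as $\xi>1$. Indeed, $M^N_t$ increases by $\alpha\eps/N$ at each accepted jump, and the total spike count of the network on $[0,T]$ has at best exponential tails (each neuron has a uniformly positive spiking probability per unit time once its age is positive, so $\Pp(M^N_t\geq R)\gtrsim c^{NR}$ for some $c>0$); against such tails $e^{\omega R^{\xi}}$ is not integrable for $\xi>2$. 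The exponential moment hypothesis \eqref{eq:Y0expomoments} can only be exploited for the \emph{limit} process, whose activity $M_t$ is a.s.\ bounded by a deterministic function because the jump term is replaced by an expectation. The paper circumvents exactly this obstruction by propagating only polynomial moments of $a$ along the particle system (Lemma~\ref{lem:apriori-PS2}, which is where \eqref{eq:syspar-H4} or \eqref{eq:syspar-H4prima} is really needed) and then running the coupling Gr\"onwall loop on the high-probability event where $\langle\mu_N(t),a^2\rangle$ and $\langle\eta_N(t),a^2\rangle$ stay below a constant (Proposition~\ref{prop:ControlRate}, with $\Pp(\EE_N^c)\lesssim 1/N$), controlling the bad event by Cauchy--Schwarz against higher moments of $a$. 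Without some substitute of this kind your Gr\"onwall loop has no integrable majorant and does not close.

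Two secondary points. First, in the compactly supported case \eqref{eq:syspar-H3} does \emph{not} make $a$ bounded on the strip $\{x\le \max_i X_0^{i,N}+T\}\times\R_+$ (it only bounds $a$ for \emph{small} ages, uniformly in $m$), and $M^N_t$ is not deterministically bounded: the correct statement is \eqref{eq:bound-M}, an a.s.\ bound involving the empirical average $Z^N_t$ of i.i.d.\ Poisson processes, which gives the $L^1$/$L^2$ control needed for Aldous but not a deterministic one. Second, your mechanism for \eqref{eq:syspar-H4prima} is not the right one: that hypothesis is a \emph{diagonal} increment condition (both arguments shifted by the same $h$), and in the paper it is used to absorb the jump $M^N\mapsto M^N+\alpha\eps/N$ in the It\^o expansion of $a^q$, the weak-connectivity restriction appearing as the smallness condition $2qC_0\alpha\eps<1$ in Lemma~\ref{lem:apriori-PS2} --- not as a statement that $\|M^N-M\|_T<1$ with high probability.
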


\begin{rem}\
\begin{itemize}
% \item Restriction~\eqref{eq:syspar-H4log} is quite logical, in the sense that it represents that the activity of the network must increase when neurons spike.
% \item Let us notice that~\eqref{eq:syspar-H4} is very restrictive, in particular, we are asking a linear growing at infinite for the second variable. This is strictly related with the presence of delays, which makes that negative contributions of the jumps at time $t$, do not control the delayed positive quantities on the global variable (see Lemma~\ref{lem:apriori-PS2} for further details).
% \item Condition~\eqref{eq:syspar-H4} can be rewrite in a more useful manner. Consider $q\in\N$ and $x,m,m'$ positive real numbers, such that $0\leq m'-m\leq1$, then
% \begin{equation}\nonumber
%  a^q(x,m)-a^q(x,m')\leq |m-m'|\big(C_{q-1}'+C_{q-1}a^{q-1}(x,m)\big),
% \end{equation} 
% for some positive constants $C_{q-1},C_{q-1}'$. 
%% Indeed, for such $m,m',x$ we have that
%% \begin{eqnarray*}
%%  a^q(x,m)-a^q(x,m') &\leq& C_{q-1}\big|a(x,m)-a(x,m')\big|\max\big(a(x,m)^{q-1},a(x,m')^{q-1}\big)
%%  \\
%%  &\leq& C_{q-1}C_0\big|m-m'\big|\big(a(x,m)^{q-1}+|a(x,m)^{q-1}-a(x,m')^{q-1}|\big).
%% \end{eqnarray*}
\item Let us notice that condition~\eqref{eq:syspar-H4} is relatively restrictive: indeed, we are asking that the rate function $a$ to be Lipschitz on its second variable, and that the fluctuations on the first variable are somehow bounded. Indeed, a direct consequence of~\eqref{eq:syspar-H4} is that
\begin{equation}\nonumber
 \frac{\partial a(x,m)}{\partial x}=\lim_{h\rightarrow0^+} \frac{a(x+h,m)-a(x,m)}{h} \leq C_0\,a(x,m).
\end{equation}
\item The model is conceived having in mind an intensity function of type
\begin{equation}\nonumber
 a(x,m)\,=\, x^\xi+\indiq_{\{x^*_-<x\}} (am+b)\,\quad \xi\in\N,\quad a,b\in\R_+,
\end{equation}
that formally speaking makes all hypotheses true.
\end{itemize}
\end{rem}

\subsection*{Mathematical overview.} As we already said, the aim of the present work is to give a new microscopic point of view of the age structured equation considered in Pakdaman-Perthame-Salort~\cite{MR2576373,MR3071416,MR3246939}. Therein, the model is proposed as a reinterpretation of the well known renewal equation and the microscopic derivation is omitted. In Tanabe-Pakdaman~\cite{Tanabe:2001vn} and Vibert-Champagnat-Pakdaman-Pham~\cite{Vibert:1998fj} authors propose a particle system but the question of convergence and chaos propagation is not addressed either. Nevertheless, the questions of existence of stationary solutions for the PDE and the numerical/simulation aspects of both: \textit{the particle system and the limit equation}, are deeply studied and several very interesting results, regarding the existence of oscillatory solutions, are given. Moreover, the effects of the finite size of the populations are contrasted with the solutions of the limit equation.

The specific mathematical tools used in the present work can be easily traced down to two recent manuscripts addressing the question of chaoticity of a unidimensional model: Founier-L\"ocherbach~\cite{Fournier:2014nr} and Robert-Touboul~\cite{Robert:2014jk}. The first paper solves the problem under the merely assumption of integrability on the initial condition, which is a remarkable weak hypothesis. Nevertheless, the path-wise uniqueness proof (which is at the end the key point of the method) uses a particular distance that is closely related to the equation itself. A different approach, based on the discretization of the limit equation, is presented in~\cite{de2014hydrodynamic}. There, the convergence is proved imposing compactness on the support of the initial conditions. In~\cite{Robert:2014jk}, the authors also assume boundedness of the initial conditions, and provide a qualitative characterization of the qualitative properties of stationary solutions and their stability in the finite-size and mean-field systems, and a detailed discussion of bifurcations, stability and multiple stationary solutions is given.

The specific mathematical tools used in the present work can be easily traced down to two recent manuscripts addressing the question of chaoticity of a unidimensional model: Founier-L\"ocherbach~\cite{Fournier:2014nr} and Robert-Touboul~\cite{Robert:2014jk}. The first paper solves the problem under the merely assumption of integrability on the initial condition, which is a remarkable weak hypothesis in comparison with other models. Nevertheless, the path-wise uniqueness proof (which is at the end the key point of the method) uses a particular distance that is closely related to the equation itself. A different approach, based on the discretisation of the limit equation, is presenting in De Masi-Galves-L\"ocherbach-Presutti~\cite{de2014hydrodynamic}. There, the convergence is proved imposing compactness on the support of the initial conditions. 

To prove the chaos propagation property there are two classical approaches: whether we use the \textit{coupling} method or an abstract \textit{compactness} argument. The coupling is very intuitive and apply in a very wide range of applications. Nevertheless, usually it is assumed that functions involved are Lipschitz continuous (see e.g.~\cite{touboul2014propagation}). Moreover, the method provides the rate of convergence by explicitly estimate the difference between the empirical measures. In Bolley-Ca\~nizo-Carrillo~\cite{Bolley:2011la}, it is proved that the method still apply in the case of locally Lispchitz continuity, but imposing some exponential moment conditions. The second method is much more general and uses a more abstract framework. It was introduced by Sznitman~\cite{sznitman1984equations}, and it is useful to prove also the existence of solutions to the SDE, but do not provide the rate of convergence.

The present model has two main novelties. The first one is that the system is not one-dimensional but add an extra equation for the coupling variable. This issue implies in particular that attempts to use the distance of Founier-L\"ocherbach~\cite{Fournier:2014nr} fail unless a more suitable distance is found. Moreover, the two-dimensional nature of the empirical measures makes that the rate of convergence attained for a $L^1$ Wasserstein distance is lower than $N^{1/2}$. A perspective of the work is to use a combined PDE/SDE approach to find a sharper entropy function that allows us to have the necessary uniqueness result (see e.g. Godinho~\cite{Godinho:2013kq}). The second novelty of the present work is the presence of delays, that is central in the original PPS model. This issue is solved by using the independence of the random environment (see e.g. Touboul~\cite{touboul2012limits,quininao2014limits}), moreover, the extra terms are treated as locally square integrable processes getting the sharp convergence rate expected.

\subsection*{Plan of the paper.} The paper is organised as follows: Section~\ref{sec:Particles} deals with the well-posedness of the particle system by finding some nice \textit{a priori} bounds of the solutions. Section~\ref{sec:Uniqueness} is related to the path-wise uniqueness question of the mean field system. The result we get remains very restrictive and only can be applied for the case of compactly supported initial conditions and/or fast decay at infinite. Well-posedness of the mean field system is studied in Section~\ref{sec:Chaos}, in particular, we use the compactness argument to find that the sequence of empirical measures converges to a weak solution of the SDE. Finally in Section~\ref{sec:MFConv}, we use the \textit{coupling} method to find different rates of convergence. An appendix completes the present work  with some general well known results in stochastic calculus theory that are useful to our developments.
% and Appendix~\ref{sec:Elementary} deals with some elementary calculations related to the particular model proposed.

\section{Study of the particle system}
\label{sec:Particles}
\setcounter{equation}{0}
\setcounter{theo}{0}

Throughout the present section we fix the number of neurons $N\geq1$. For $\mu\in\Pp(\mathbb D(\R_+^2))$ and $\varphi\in C(\R_+^2)$, let us denote the duality product,
\begin{equation}\nonumber
 \langle \mu(t), \varphi\rangle\,:=\, \int_{\mathbb D(\R_+^2)}\varphi(\gamma_t,\beta_t)\,\mu(d\gamma,d\beta).
\end{equation}
Let us notice that for any fixed \textit{random environment}, i.e., any realisation of the initial conditions, the Poisson processes and the delays at $t=0$, we can construct explicitly a unique solution to the problem (it suffices to arrange the jumping times and construct the solution by solving the equation in between two consecutive jumps). Therefore there is a unique strong maximal solution $(X_t^{1,N},\ldots,X_t^{N,N},M^N_t)$ defined on a time interval of the type $[0,\tau^*)$, where $\tau^*$ is given by
\begin{equation}\nonumber
\tau^*\,:=\,\inf\big\{t\geq0\,:\,\min(|X_t^{1,N}|,\ldots,|X_t^{N,N}|,|M_t^N|)=\infty\big\}.
\end{equation}

\begin{proof}[Proof of Proposition~\ref{prop:syspar-WP}]
%Moreover, observe that for all $T>0$, righthand sides of~\eqref{eq:bound-X} and~\eqref{eq:bound-M} are finite, implying that necessarily $\tau^*=\infty$, and by consequence, the particle system~\eqref{eq:syspar-M}-\eqref{eq:syspar-X} is \textit{locally} strongly well-posed..
If we are able to prove that $\tau^*=\infty$, then the particle system~\eqref{eq:syspar-M}-\eqref{eq:syspar-X} is \textit{locally} strongly well-posed proving Proposition~\ref{prop:syspar-WP}. Then it suffices to find some nice \textit{a-priori} bounds.
\smallskip

We start by noticing that any solution $(X_t^{1,N},\ldots, X_t^{N,N},M_t^N)_{t\geq0}$ to~\eqref{eq:syspar-X}-\eqref{eq:syspar-M} satisfies a.s.
\begin{equation}\label{eq:bound-X}
 \max_{1\leq i\leq N} X^{i,N}_t\,\leq\, \max X^{i,N}_0+t,\qquad\forall\,t\geq0.
\end{equation}
Moreover, denoting by $\bar X^N_t$ the (empirical) mean of $(X^{i,N}_t)$, it follows that
 \begin{equation}\nonumber
  \bar X^N_t \,=\, \bar X_0^N(0) +t-\frac1N\sum_{i=1}^N\int_0^t X_{s-}^{i,N}\int_0^\infty\indiq_{\{u\leq a(X_{s-}^{i,N},M_{s-}^N)\}}\NN^i(du,ds),
 \end{equation}
 and using that $\bar X^N_t$ is nonnegative,
 \begin{equation}\nonumber
  \frac1N\sum_{i=1}^N\int_0^t X_{s-}^{i,N}\int_0^\infty\indiq_{\{u\leq a(X_{s-}^{i,N},M_{s-}^N)\}}\NN^i(du,ds)\,\leq\, \bar X_0^N(0) +t.
 \end{equation}
 Next, we fix some $\delta>0$, and use the consistency condition~\eqref{eq:syspar-H3} to get that
  \begin{equation}\label{eq:aux1}
   (\exists\, x^*_\delta)\,\,(\forall\, m\in\R_+,x\leq x^*_\delta),\quad a(x,m)\leq\delta.
  \end{equation}
In particular, using that  $x\geq x^*_\delta\,(1-\indiq_{x<x^*_\delta})$, we have
 \begin{multline}\nonumber
  \frac1N\sum_{i=1}^N\int_0^t \int_0^\infty\indiq_{\{u\leq a(X_{s-}^{i,N},M_{s-}^N)\}}\NN^i(du,ds)\,\leq\, \frac{1}{x^*_\delta}(\bar X_0^N +t)
  +\frac1N\sum_{i=1}^N\int_0^t \int_0^\infty\indiq_{\{u\leq a(x_\delta,M_{s-}^N)\}}\NN^i(du,ds),
 \end{multline}
and taking $\delta<1$, we find a positive constant $C_1$ such that 
\begin{equation}\label{eq:bound-Exp}
 \frac1N\sum_{i=1}^N\int_0^t (1+X_{s-}^{i,N})\int_0^\infty\indiq_{\{u\leq a(X_{s-}^{i,N},M_{s-}^N)\}}\NN^i(du,ds)\,\leq\, C_{1}(t+\bar X_0^N) +Z_t^N,\qquad \forall\, t\geq0,
\end{equation}
where
 $$
 Z_t^N:=N^{-1}\sum_{j=1}^N\int_0^t\int_0^\infty\indiq_{\{u\leq 1\}}\,\NN^j(du,ds).
 $$

Finally, from equation~\eqref{eq:syspar-M}
 \begin{equation}\nonumber
 M_t^N \,\leq\, M_0^N+\frac{\alpha\,\eps}{N}\sum_{j=1}^N \int_{0}^{t-\tau_j}\int_{0}^\infty  \indiq_{\{u\leq a(X^{j,N}_{s-},M_{s-}^N)\}}\,\NN^j(du,ds),
 \end{equation}
 but $a$ is nonnegative, therefore
\begin{equation}\nonumber
 M_t^N \,\leq\, M_0^N+\frac{\alpha\,\eps}{N}\sum_{j=1}^N \int_{0}^{t}\int_{0}^\infty  \indiq_{\{u\leq a(X^{j,N}_{s-},M_{s-}^N)\}}\,\NN^j(du,ds)
 \leq M_0^N+ \alpha\,\eps\,\big( C_1(t+\bar X_0^N)+Z_t^N\big),
 \end{equation}
As a consequence, a.s. the following estimate holds
\begin{equation}\label{eq:bound-M}
 M_t^{N}\,\leq\,M_0^N+C_{1}' (t+\bar X_0^N+Z_t^N),\quad\forall t\geq0,
\end{equation}
for another positive constant $C_{1}'$, depending only on $C_1$, $\alpha$ and $\eps$.

Summarizing, inequalities~\eqref{eq:bound-X} and~\eqref{eq:bound-M} implies that a.s. the solutions to~\eqref{eq:syspar-M}-\eqref{eq:syspar-X} are locally bounded, and therefore $\tau^*=\infty$.
\end{proof}

Let us now study the integrability of the intensity rate $a$. To that aim, let us recall some general results of stochastic calculus for jump processes. First, we notice that for any $1\leq i\leq N$ fixed, the particle system equation can be rewritten by 
\begin{equation}\nonumber
  X^{i,N}_t = X_0^{i,N}+t- \int_0^tX^{i,N}_{s-}\int_{0}^\infty \indiq_{\{u\leq a(X^{i,N}_{s-},M_{s-}^N)\}}\,\NN^i(du,ds),
\end{equation}
\begin{multline}\nonumber
  M_t^N = M_0^N-\alpha\int_0^t M_s^N\,ds+ \frac{\alpha\,\eps}{N}\int_{0}^{t}\indiq_{\{s\leq t-\tau_i\}}\int_{0}^\infty  \indiq_{\{u\leq a(X^{i,N}_{s_-},M_{s_-}^N)\}}\,\NN^i(du,ds)
  \\
  +\frac{\alpha\,\eps}{N}\sum_{j\neq i}^N \int_{0}^{t}\int_{0}^\infty  \indiq_{\{s\leq t-\tau_j\}}\indiq_{\{u\leq a(X^{j,N}_{s_-},M_{s_-}^N)\}}\,\NN^j(du,ds),
\end{multline}
here we see clearly that when the process $\NN^i$ has a jump, then both variables are changed at the same time, unless the jump is in the time interval $(t-\tau_i,t]$. Let $f\in C^1(\R_+^2)$ a regular test function, thanks to the previous remark, the It\^o's formula writes
\begin{multline}\label{eq:Ito-gen}
 f(X^{i,N}_t,M^N_t)\,=\,f(X^{i,N}_0,M^N_0)+\int_0^t\partial_x f(X^{i,N}_s,M^N_s)\,ds-\alpha\,\int_0^tM^N_s\partial_m f(X^{i,N}_s,M^N_s)\,ds
 \\
 +\int_0^{t}\big[f\big(0,M^N_s+\frac{\alpha\,\eps}{N}\indiq_{\{s<t-\tau_i\}}\big)-f(X^{i,N}_s,M^N_s)\big]a(X^{i,N}_s,M^N_s)\,ds
  \\
 +\sum_{j\neq i}\int_0^{t-\tau_j}\Big[f\big(X^{i,N}_{s+\tau_j},M^N_{s+\tau_j}+\frac{\alpha\eps}{N}\big)-f(X^{i,N}_{s+\tau_j},M^N_{s+\tau_j})\Big]a(X^{j,N}_{s},M^N_{s})\,ds\,+\MM^{i,N}_{f}(t).
\end{multline}
If we look carefully, in the no delay case ($\tau_i=0$) equation~\eqref{eq:Ito-gen} is coherent with a instantaneous jump in both variables. The respective local martigale $(\MM_{f}^{i,N}(s))$ is therefore defined by
\begin{multline}\label{eq:Ito-Martingale}
\MM^{i,N}_f(t):= \int_0^t\big[f\big(0,M^N_{s-}+\frac{\alpha\,\eps}{N}\indiq_{\{s<t-\tau_i\}}\big)-f(X^{i,N}_{s-},M^N_{s-})\big]
 \\
 \times\Big[\int_0^\infty\indiq_{\{u\leq a(X^{i,N}_{s-},M^N_{s-})\}}\NN^{i}(du,ds)-a(X^{i,N}_s,M^N_s)\,ds\Big]
 \\
 +\sum_{j\neq i}\int_0^{t-\tau_j}\Big[f\big(X^{i,N}_{s_-+\tau_j},M^N_{s_-+\tau_j}+\frac{\alpha\eps}{N}\big)-f(X^{i,N}_{s_-+\tau_j},M^N_{s_-+\tau_j})\Big]
 \\\times\Big[\int_0^\infty\indiq_{\{u\leq a(X^{j,N}_{s-},M^N_{s-})\}}\NN^j(du,ds)-a(X^{j,N}_{s},M^N_{s})\,ds\Big].
\end{multline}
%
%Moreover, if we multiply by $N^{-1}$ and add over $i$ in equation~\eqref{eq:Ito-gen} we get the following relation for the empirical measure
%\begin{multline}\label{eq:Ito-measure}
% \langle \mu_N(t),f\rangle\,=\,\langle\mu_N(0),f\rangle+\int_0^t\langle\mu_N(s),\partial_x f\rangle\,ds-\int_0^tM^N_s\partial_m f(X^{i,N}_t,M^N_t)\,ds
% \\
% +N\int_0^t\big[\langle\mu_N,f(0,\cdot)-f(\cdot,\cdot)\,a(\cdot,\cdot)\rangle\big]\,ds
% \\
% +\sum_{j=1}^N\int_0^{t-\tau_j}\Big[f\big(X^{i,N}_{s+\tau_j},M^N_{s+\tau_j}+\frac{\alpha\eps}{N}\big)-f(X^{i,N}_{s+\tau_j},M^N_{s+\tau_j})\Big]a(X^{j,N}_{s},M^N_{s})\,ds\,+\MM^{i,N}_{f}(t),
%\end{multline}
%
\begin{lem}\label{lem:apriori-PS2}
If the intensity function $a(\cdot,\cdot)$ satisfies the growing condition~\eqref{eq:syspar-H4} then, any solution $(X_t^{1,N},\ldots, X_t^{N,N},M_t^N)_{t\geq0}$ to~\eqref{eq:syspar-X}-\eqref{eq:syspar-M} with initial data such that
\begin{equation}\label{eq:hypo-bound-XX}
 \max_{1\leq i\leq N}\Ee\big[a(X^{i,N}_0,M^N_0)^q\big]<\infty,
\end{equation}
with $q\in\{1,\ldots,5\}$, satisfies
\begin{equation}\label{eq:bound-XX}
 \Ee\big[\langle\mu_N(t),a^q\rangle\big]\,<\,\infty,\qquad t\geq0.
\end{equation}
In the no delay case, there is $\eps_0$ such that in the weakly connected regime $\eps\in[0,\eps_0)$, the growing condition~\eqref{eq:syspar-H4} can be replaced by~\eqref{eq:syspar-H4prima}.
\end{lem}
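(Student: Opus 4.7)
My strategy is to apply the It\^o formula~\eqref{eq:Ito-gen} with $f(x,m)=a(x,m)^q$ to each particle $i$, sum over $i=1,\ldots,N$, divide by $N$ so as to recognise the duality product $\langle\mu_N(t),a^q\rangle$, and take expectation after localising by $\sigma_n:=\inf\{t:\langle\mu_N(t),a^q\rangle>n\}$ so that the sum of local martingales~\eqref{eq:Ito-Martingale} becomes a true martingale on $[0,t\wedge\sigma_n]$; the conclusion then extends to $n\to\infty$ by Fatou. Since $a$ is only assumed continuous, It\^o is first applied to a smooth mollification of $a$ and we pass to the limit. The whole argument is an induction on $q\in\{1,\ldots,5\}$: at each step the bound on $U^q_t:=\Ee[\langle\mu_N(t),a^q\rangle]$ is reduced to a Gronwall inequality whose lower order remainders only involve $U^k$ with $k<q$, controlled by the induction hypothesis (the base case $q=1$ uses $\langle\mu_N,a^0\rangle=1$).

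\textbf{Identification of the signed terms.} In the It\^o identity written for $f=a^q$, the self-firing contribution equals $-\int_0^t U^{q+1}_s\,ds$ thanks to the boundary condition $a(0,\cdot)=0$ of~\eqref{eq:syspar-H1}, which wipes $a(X^i_{s_-},\cdot)^q$ down to $0$ at each firing time of neuron $i$: this is the crucial \emph{negative} term. The $x$-drift is bounded above by $qC_0\int_0^t U^q_s\,ds$ using the consequence $\partial_x a\le C_0 a$ of either~\eqref{eq:syspar-H4} or~\eqref{eq:syspar-H4prima}, which gives $\partial_x a^q\le qC_0 a^q$. The $m$-drift $-\alpha\int_0^t M^N_s\,\partial_m a^q\,ds$ is non-positive (since $\partial_m a^q\ge 0$ and $M^N_s\ge 0$) and is simply discarded in the upper bound.

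\textbf{Cross-jump contribution.} Under~\eqref{eq:syspar-H4}, the mean value theorem combined with $|a(x,m+h)-a(x,m)|\le C_0 h$ gives $a(x,m+h)^q-a(x,m)^q\le qC_0\,h\,(a(x,m)+C_0 h)^{q-1}$ with $h=\alpha\eps/N$. Summing over $i$ and $j\ne i$, dividing by $N$, substituting $u=s+\tau_j$ and using exchangeability, the leading contribution is
\[
qC_0\alpha\eps\int_0^t\Ee\big[a(X^{1,N}_{u-\tau_1},M^N_{u-\tau_1})\,\langle\mu_N(u),a^{q-1}\rangle\big]\,du,
\]
while the remaining binomial terms of $(a+C_0 h)^{q-1}$ produce corrections scaled by powers of $1/N$ and by lower moments $U^k$, $k<q$, already controlled by the induction. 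H\"older's inequality with exponents $q$ and $q/(q-1)$, together with the pointwise Jensen estimate $\langle\mu_N,a^{q-1}\rangle^{q/(q-1)}\le\langle\mu_N,a^q\rangle$, bounds the integrand by $(U^q_{u-\tau_1})^{1/q}(U^q_u)^{(q-1)/q}\le\Phi^q_u$, where $\Phi^q_u:=\sup_{v\le u}U^q_v$. Combined with the $x$-drift this yields $\Phi^q_T\le U^q_0+C(q)\int_0^T\Phi^q_s\,ds+R_{q-1}(T)$, closed by Gronwall. In the no-delay, weakly connected case we use instead the sharper estimate $a(x,m+h)^q-a(x,m)^q\le[(1+C_0 h)^q-1]a(x,m)^q$ coming from~\eqref{eq:syspar-H4prima}, which produces $qC_0\alpha\eps\,\Ee[\langle\mu_N(s),a^q\rangle\langle\mu_N(s),a\rangle]\le qC_0\alpha\eps\,U^{q+1}_s$ by Chebyshev's sum inequality applied \emph{at coincident times} to the comonotone non-decreasing maps $a$ and $a^q$. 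Combined with the negative self-reset $-U^{q+1}_s$, the coefficient $(qC_0\alpha\eps-1)U^{q+1}_s$ is non-positive as soon as $\eps\le\eps_0:=1/(5C_0\alpha)$ uniformly in $q\in\{1,\ldots,5\}$, and a second Gronwall closes the argument.

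\textbf{Main obstacle.} The main technical difficulty is the shifted-time factor $\langle\mu_N(s+\tau_j),a^{q-1}\rangle$ appearing under~\eqref{eq:syspar-H4}: Chebyshev's sum inequality cannot be used at non-coincident times, which forces the H\"older step and the passage to the supremum $\Phi^q_T$, and requires the compact support of $b$. Secondarily, under~\eqref{eq:syspar-H4prima} the uniform smallness $\eps<1/(5C_0\alpha)$ is precisely what is needed to absorb the $U^{q+1}$-term coming from the cross-jump by the negative self-reset: this is also why the weak connectivity variant cannot accommodate delays, Chebyshev at the same time no longer being available when $\mu_N(s+\tau_j)\ne\mu_N(s)$.
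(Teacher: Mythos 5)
Your proposal is correct and follows essentially the same route as the paper: It\^o's formula applied to $a^q$, the negative term $-\int_0^t\langle\mu_N(s),a^{q+1}\rangle\,ds$ produced by $a(0,\cdot)=0$, the bound $\partial_x a\le C_0\,a$ for the drift, the discarded non-positive $m$-drift, and Gronwall, with the $\langle\mu_N,a^{q+1}\rangle$ cross-contribution absorbed by the reset term for $\eps$ small in the no-delay/\eqref{eq:syspar-H4prima} case. The only (inessential) divergence is your treatment of the time-shifted cross product via H\"older in expectation and a running supremum --- a step that, as written, needs one extra H\"older in time to avoid interchanging supremum and expectation over the random delays --- whereas the paper uses the pointwise Young inequality $a(X^{i,N}_s,M^N_s)^{q-1}a(X^{j,N}_{s-\tau_j},M^N_{s-\tau_j})\le\frac{q-1}{q}a(X^{i,N}_s,M^N_s)^{q}+\frac1q a(X^{j,N}_{s-\tau_j},M^N_{s-\tau_j})^{q}$ followed by the change of variables $\int_{\tau_j}^{t}a(X^{j,N}_{s-\tau_j},M^N_{s-\tau_j})^{q}\,ds\le\int_{0}^{t}a(X^{j,N}_{s},M^N_{s})^{q}\,ds$, which sidesteps the issue entirely.
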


\begin{proof}
We start by applying It\^o's formula~\eqref{eq:Ito-gen} to $f(\cdot,\cdot)=a(\cdot,\cdot)^q$ (using a stoping time if necessary) to get
\begin{multline}\nonumber
 a(X^{i,N}_t,M^N_t)^q\,=\, a(X^{i,N}_0,M^N_0)^q
 +q\,\int_0^ta(X^{i,N}_{s},M^N_{s})^{q-1}\,\partial_x a(X^{i,N}_{s},M^N_{s})\,ds
 \\
 -q\,\alpha\,\int_0^tM_{s}^Na(X^{i,N}_{s},M^N_{s})^{q-1}\,\partial_ma(X^{i,N}_{s},M^N_{s})\,ds
 \\
+\int_0^t\big[a\big(0,M^N_{s}+\frac{\alpha\,\eps}{N}\indiq_{\{s\leq t-\tau_i\}}\big)^q-a(X^{i,N}_{s},M^N_{s})^q\big]\, a(X_{s}^{i,N},M^N_{s})\,ds
\\
 +\sum_{j\neq i}\int_{\tau_j}^{t}\Big[a\big(X^{i,N}_{s},M^N_{s}+\frac{\alpha\,\eps}N\big)^q-a(X^{i,N}_{s},M^N_{s})^q\Big]\, a(X_{s-\tau_j}^{j,N},M^N_{s-\tau_j})\,ds+\MM^{i,N}_{a^q}(t).
\end{multline}
Using that $a(0,\cdot)=0$,  and that $a$ is  non-negative and non-decreasing function in both variables, we get that
\begin{multline}\nonumber
 a(X^{i,N}_t,M^N_t)^q\,\leq\, a(X^{i,N}_0,M^N_0)^q + q\,C_0\int_0^ta(X^{i,N}_{s},M^N_{s})^q\,ds
%-\int_0^t\int_0^\infty a(X^{i,N}_{s'-},M^N_{s'-})^2\indiq_{\{u\leq a(X_{s'-}^{i,N},M^N_{s'-})\}}\NN^i(du,ds')
\\
 +q\,C_0\frac{\alpha\,\eps}{N}\sum_{j\neq i}\int_{\tau_j}^{t}a(X^{i,N}_{s},M^N_{s})^{q-1}a(X_{s-\tau_j}^{j,N},M^N_{s-\tau_j})\,ds+\MM^{i,N}_{a^q}(t),
\end{multline}
implying that
\begin{multline}\nonumber
 a(X^{i,N}_t,M^N_t)^q\,\leq\, a(X^{i,N}_0,M^N_0)^q + q\,C_0\int_0^ta(X^{i,N}_{s},M^N_{s})^q\,ds
 -\int_0^ta(X^{i,N}_{s},M^N_{s})^{q+1}\,ds
\\
 +q\,C_0\alpha\,\eps\int_{0}^{t}a(X^{i,N}_{s},M^N_{s})^{q}\,ds+q\,C_0\frac{\alpha\,\eps}{N}\sum_{j\neq i}\int_{0}^{t}a(X_{s}^{j,N},M^N_{s})^q\,ds+\MM^{i,N}_{a^q}(t),
\end{multline}
Next, we multiply by $N^{-1}$ and add over $i\in\{1,\ldots,N\}$, to get
\begin{multline}\nonumber
%\label{eq:putain!}
\langle\mu_N(t),a^{q}\rangle\leq \langle\mu_N(0),a^{q}\rangle
 +q\,C_0(1+\eps)\int_0^t\langle\mu_N(s),a^{q}\rangle\,ds
 \\
 +q\,C_{0}\alpha\eps\int_0^{t}\langle\mu_N(s),a^{q}\rangle\,ds +\frac1N\sum_{i=1}^N\MM^{i,N}_{a^q}(s),
\end{multline}
taking expectation and thanks to Gronwall's lemma we get the conclusion.\smallskip

In the case of~\eqref{eq:syspar-H4prima}, 
%we apply Lemma~\ref{lem:GenRob} to get that there are some $c_1$ and $d_1$ positive constants, and a value $\eta_{a^q}>0$ such that for any $\delta\in[0,\eta_{a^q})$ and $m\geq0$,
%\begin{equation}\nonumber
% a(x,m+\delta)^q-a(x,m)^q\,\leq\, \delta(c_1 a(x,m)^q+d_1),\quad \forall\,x\in\R_+.
%\end{equation}
% Recalling that $\alpha$ and $\eps$ are fixed, we can take $N$ large enough in order to have that $\alpha\eps N^{-1}<\eta_{a^q}$ and then
%\begin{equation}\nonumber
%a\big(X^{i,N}_{s},M^N_{s}+\frac{\alpha\,\eps}N\big)^q-a(X^{i,N}_{s},M^N_{s})^q\,\leq\, \frac{\alpha\,\eps}{N}\big(c_1a(X^{i,N}_{s},M^N_{s})^q+d_1\big).
%\end{equation}
coming back to It\^o's formula~\eqref{eq:Ito-gen}, we get
\begin{multline}\nonumber
a(X^{i,N}_t,M^N_t)^q\,=\, a(X^{i,N}_0,M^N_0)^q
 +q\,C_0\,\int_0^ta(X^{i,N}_{s},M^N_{s})^q\,ds
 -\int_0^ta(X^{i,N}_{s},M^N_{s})^{q+1}\,ds
\\
 +q\,C_0\sum_{j=1}^N\int_0^{t}\frac{\alpha\,\eps}{N}a(X^{i,N}_{s},M^N_{s})^q\, a(X_{s-\tau_j}^{j,N},M^N_{s-\tau_j})\,ds+\MM^{i,N}_{a^q}(t),
\end{multline}
therefore
\begin{multline}\nonumber
\langle\mu_N(t),a^q\rangle\,=\, \langle\mu_N(0), a^q\rangle
 +q\,C_0\,\int_0^t\langle\mu_N(s),a^q\rangle\,ds
 -\int_0^t\langle\mu_N(s),a^{q+1}\rangle\,ds
\\
 +2\,q\,C_0\alpha\,\eps\int_0^{t}\langle\mu_N(s),a^{q+1}\rangle\,ds+N^{-1}\sum_{i=1}^N\MM^{i,N}_{a^q}(t),
\end{multline}
taking expectation we see that the result holds as soon as $1>2\,q\,C_0\alpha\,\eps$.
\end{proof}

%\begin{cor}\label{coro:apriori-One}
%Under hypotheses of Lemma~\ref{lem:apriori-PS2}, if $M^N_0$ is square integrable, then it remains square integrable for all positive times.
%\end{cor}
%
%\begin{proof}
% As before, applying It\^o's formula~\eqref{eq:Ito-gen} to $f(x,m)=m^2$, we get that
% \begin{multline}\nonumber
% (M^N_t)^2\,=\,(M^N_0)^2-2\,\alpha\,\int_0^t(M^N_s)^2\,ds
% +\sum_{j\neq i}\int_0^{t-\tau_j}\Big[2\,M^N_{s+\tau_j}\frac{\alpha\eps}{N}+\big(\frac{\alpha\eps}{N}\big)^2\Big]a(X^{j,N}_{s},M^N_{s})\,ds\,+\MM^{N}_{m^2}(t)
% \\
% \leq\,(M^N_0)^2+\frac{\alpha^2\eps^2}{N}\int_0^{t}\langle\mu_N(s),a\rangle\,ds+\frac N2\int_0^{t}\langle\mu_N(s),a^2\rangle\,ds
% \\
% +\sum_{j=1}^N\int_0^{t-\tau_j}\Big[4\frac{\alpha^2\eps^2}{N^2}\,(M^N_{s+\tau_j})^2+\big(\frac{\alpha\eps}{N}\big)^4\Big]\,ds\,+\MM^{N}_{m^2}(t),
%\end{multline}
%taking expectation we conclude the desired result. Obviously, in the case that $\bar X^N_0$ is also square integrable, the result is simply a consequence of inequality~\eqref{eq:bound-M}
%\end{proof}

\section{Path-wise uniqueness of the mean-field system}
\label{sec:Uniqueness}
\setcounter{equation}{0}
\setcounter{theo}{0}

The object of this brief section is to prove under what circumstances the mean field equations~\eqref{eq:meanfield-Y}-\eqref{eq:meanfield-M} are well posed. We restrict our analysis to a very strong hypothesis that englobes many of the applications that can be considered. 
%In the final section of this work we explore the question of how the present setting can be generalised.
% As we see in the following, it is important to chose well the functional framework we work in because is the first step to characterise the convergence of the solutions $(X^{1,N}_t,\ldots,X^{N.N}_t,M^N_t)$.

We start by stating some equivalent upper bounds on the solution to the limit equation~\eqref{eq:syspar-X}-\eqref{eq:syspar-M} for the mean field system. The proof is very similar to the arguments used to get~\eqref{eq:bound-X} and~\eqref{eq:bound-M} and therefore we do not go into full details.

\begin{lem}
Any solution $(M_t,Y_t)_{t\geq0}$ to~\eqref{eq:syspar-X}-\eqref{eq:syspar-M} satisfies a.s.
%\begin{equation}\nonumber
%%\label{eq:meanfield-H3}
% \Ee\big[a(Y_0,M_0)\big]\,<\,\infty.
% \end{equation}
%Then, it holds
\begin{equation}\label{eq:bound-Y}
 Y_t\,\leq\,Y_0+t,\qquad \forall\, t\geq0.
\end{equation}
Moreover, there is a positive constant $C_2$, depending only on the parameters of the system, such that
\begin{equation}\label{eq:bound-Sum}
 \int_0^t \Ee[(1+Y_{s})\,a(Y_{s},M_{s})]\,ds\,\leq\,C_2(\Ee[Y_0]+t),\qquad \forall\, t\geq0.
\end{equation}
As a consequence, there is another positive constant $C_2'$, such that a.s.
\begin{equation}\label{eq:bound-Mf}
 M_t\,\leq\,M_0+C_2' (t+\Ee[Y_0]),\qquad \forall\, t\geq0.
\end{equation}
\end{lem}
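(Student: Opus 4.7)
The plan is to treat the three claims in order, mirroring the techniques that produced~\eqref{eq:bound-X}, \eqref{eq:bound-Exp} and~\eqref{eq:bound-M} for the particle system. For the pointwise inequality $Y_t\leq Y_0+t$, I would just read off~\eqref{eq:meanfield-Y}: the linear drift contributes $+t$, while the jump integral $-\int_0^t Y_{s-}\int_0^\infty \indiq_{\{u\leq a(Y_{s-},M_{s-})\}}\,\NN(du,ds)$ is pathwise non-positive (each spike resets $Y$ to zero). Hence $Y_t$ is dominated pathwise by the no-jump trajectory $Y_0+t$.

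For the integral bound~\eqref{eq:bound-Sum}, the natural first move is to take expectation in~\eqref{eq:meanfield-Y}. After a standard localization by $\tau_n:=\inf\{t:Y_t\geq n\}$ (a.s.\ finite for each $n$ by the step above), the compensator of the Poisson integral is $a(Y_s,M_s)\,ds$, so from $\Ee[Y_{t\wedge\tau_n}]\geq 0$ and monotone convergence as $n\to\infty$ one obtains
\[
\int_0^t \Ee[Y_s\,a(Y_s,M_s)]\,ds\,\leq\,\Ee[Y_0]+t.
\]
To upgrade the integrand to $(1+Y_s)\,a(Y_s,M_s)$, I would replay the refractory-period trick of~\eqref{eq:aux1}: pick $\delta\in(0,1)$ and the associated $x^*_\delta>0$ from hypothesis~\eqref{eq:syspar-H3}, and split on $\{Y_s<x^*_\delta\}$ versus $\{Y_s\geq x^*_\delta\}$ to obtain the pointwise bound $a(Y_s,M_s)\leq\delta+(Y_s/x^*_\delta)\,a(Y_s,M_s)$. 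Integrating and combining with the previous inequality yields~\eqref{eq:bound-Sum} with a constant $C_2$ depending only on $a$.

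Finally, for~\eqref{eq:bound-Mf}, I would discard the non-positive decay term $-\alpha\int_0^t M_s\,ds$ in~\eqref{eq:meanfield-M} to obtain
\[
M_t\,\leq\,M_0+\alpha\,\eps\int_0^t\int_0^s\Ee[a(Y_{s-w},M_{s-w})]\,b(dw)\,ds,
\]
and then apply Fubini together with the change of variable $u=s-w$ and the probability-mass bound $\int_{\R_+} b\leq 1$ to dominate the double integral by $\int_0^t \Ee[a(Y_u,M_u)]\,du$, which is already controlled by the previous step. Setting $C_2':=\alpha\,\eps\,C_2$ closes the argument. The only mildly subtle point is the interchange of expectation and stochastic integral in the second claim, handled cleanly by the localization $\tau_n$; everything else is a deterministic-mean transcription of the particle-system proof, the key ingredient being the refractory hypothesis~\eqref{eq:syspar-H3}.
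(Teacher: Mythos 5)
Your proposal is correct and follows essentially the same route as the paper: the pathwise domination for $Y_t$, the expectation/compensator argument combined with the refractory-period splitting from~\eqref{eq:syspar-H3} for the integral bound, and the Fubini/change-of-variables $u=s-w$ with $\int b\le 1$ for the bound on $M_t$. The localization detail you flag is the only point the paper glosses over, and your treatment of it is sound.
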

\begin{proof}
 Inequalities~\eqref{eq:bound-Y} and~\eqref{eq:bound-Sum} are easily checked by recalling~\eqref{eq:syspar-X}. The last one, is a consequence of a change of variables. Indeed, we have that
 \begin{multline}\nonumber
  \int_0^t\int_0^s\Ee[a(Y_{s-w},M_{s-w})]\,b(dw)\,ds
%  &=& \int_0^{\tau}\,b(ds')\int_{\tau}^t\Ee[a(Y_{s-s'},M_{s-s'})]\,ds
%  &=& \int_0^{\tau}\,b(ds')\int_{\tau-s'}^{t-s'}\Ee[a(Y_{s},M_{s})]\,ds
  = \int_0^{t}b(dw)\int_{w}^{t}\Ee[a(Y_{s-w},M_{s-w})]\,ds\\
   \leq \int_0^{t}b(dw)\int_{0}^{t}\Ee[a(Y_{s},M_{s})]\,ds.
 \end{multline}
 Finally, we use again that $y\geq y_\delta(1-\indiq_{\{y\leq y_\delta\}})$ to get
 \begin{equation}\nonumber
 \int_0^t \Ee[a(Y_{s},M_{s})]\,ds\,\leq\,y_\delta^{-1}(\Ee[Y_0]+t)+\int_0^t\Ee[a(y_\delta,M_s)]\,ds,
\end{equation}
and the conclusion follows.
\end{proof}

\begin{prop}\label{prop:UniqBounded}
 Path-wise uniqueness holds true for the mean field system~\eqref{eq:meanfield-Y}-\eqref{eq:meanfield-M}, in the class of processes $(Y_t,M_t)_{t\geq0}$ such that there exist deterministic locally bounded functions $A,B:\R_+\mapsto\R_+$ such that a.s.~\eqref{eq:locallyHyp} holds.
\end{prop}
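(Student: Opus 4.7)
The plan is to follow the classical synchronous coupling strategy combined with a localisation argument. Let $(Y^1,M^1)$ and $(Y^2,M^2)$ be two solutions of~\eqref{eq:meanfield-Y}-\eqref{eq:meanfield-M} in the class~\eqref{eq:locallyHyp}, driven by the same Poisson measure $\mathcal N$ and starting from the same initial data $(Y_0,M_0)$. Because of~\eqref{eq:locallyHyp}, for each $T>0$ one can introduce a stopping time $\tau_R:=\inf\{t\ge 0:Y^1_t\vee Y^2_t\vee M^1_t\vee M^2_t>R\}$ which tends to $+\infty$ a.s. as $R\to\infty$, and I would work throughout on $[0,t\wedge\tau_R]$ before letting $R\to\infty$ at the end.

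For the age component, I would analyse the jumps of $|Y^1_t-Y^2_t|$ directly. At each atom $(s,u)$ of $\mathcal N$, three cases occur: either $u\le a(Y^1_{s-},M^1_{s-})\wedge a(Y^2_{s-},M^2_{s-})$, in which case both processes reset to zero and $|Y^1-Y^2|$ drops to $0$; or $u$ lies between the two intensities, in which case only one of them resets and the increment of $|Y^1-Y^2|$ is bounded by $Y^1\vee Y^2$; or $u$ is above both intensities and nothing happens. Compensating $\mathcal N$ and using the deterministic bound $Y^1\vee Y^2\le R$ valid up to $\tau_R$, this would yield
\[
\mathbb E\bigl[|Y^1_{t\wedge\tau_R}-Y^2_{t\wedge\tau_R}|\bigr]\le R\int_0^t\mathbb E\bigl[|a(Y^1_{s\wedge\tau_R},M^1_{s\wedge\tau_R})-a(Y^2_{s\wedge\tau_R},M^2_{s\wedge\tau_R})|\bigr]\,ds.
\]

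For the global activity, I would subtract the two copies of~\eqref{eq:meanfield-M}. Since the nonlocal term involves only an expectation, the difference $|M^1-M^2|$ satisfies, after the change of variable $r=s-w$ and using that $\int b(dw)\le 1$,
\[
|M^1_t-M^2_t|\le \alpha\int_0^t|M^1_s-M^2_s|\,ds+\alpha\eps\int_0^t\mathbb E\bigl[|a(Y^1_r,M^1_r)-a(Y^2_r,M^2_r)|\bigr]\,dr.
\]
Summing the two estimates into $\Delta_R(t):=\mathbb E[|Y^1_{t\wedge\tau_R}-Y^2_{t\wedge\tau_R}|]+|M^1_{t\wedge\tau_R}-M^2_{t\wedge\tau_R}|$ would then produce a closed Gronwall-type inequality $\Delta_R(t)\le C_R\int_0^t\Delta_R(s)\,ds$, so that $\Delta_R\equiv 0$ on $[0,T]$; letting $R\to\infty$ would complete the proof.

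The main obstacle is the passage from $\mathbb E[|a(Y^1,M^1)-a(Y^2,M^2)|]$ to $\Delta_R$. Under~\eqref{eq:syspar-H1}-\eqref{eq:syspar-H3} alone, $a$ is only continuous and non-decreasing, which is insufficient for a naive Lipschitz bound. The crucial point of the argument would be to exploit the fact that, before $\tau_R$, all four processes live in the compact set $[0,R]^2$, on which $a$ admits a local Lipschitz constant $L_R$ (implicit in the monotone structure used elsewhere in the paper, cf.~\eqref{eq:syspar-H4}). Should only continuity be available, one would fall back on a Bihari-type refinement with the uniform modulus of continuity of $a$ on $[0,R]^2$, or approximate $a$ from below by a non-decreasing sequence of Lipschitz functions and pass to the limit using monotonicity and the uniform bounds coming from~\eqref{eq:locallyHyp}.
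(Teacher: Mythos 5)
Your proposal is correct and follows essentially the same route as the paper: synchronous coupling of two solutions driven by the same Poisson measure, the a.s. bounds from~\eqref{eq:locallyHyp} to confine all four processes to a compact set on $[0,T]$ (your stopping time $\tau_R$ is superfluous since the deterministic bounds $A,B$ already give this, but harmless), local Lipschitz continuity of $a$ on that compact, and Gr\"onwall. You correctly flag that the local Lipschitz property is the crux and is not implied by~\eqref{eq:syspar-H1}--\eqref{eq:syspar-H3} alone; the paper resolves this simply by asserting that $a$ is regular (differentiable), so your Bihari/approximation fallbacks are not needed (and would not work for a general modulus of continuity anyway).
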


We remark that thanks to inequalities~\eqref{eq:bound-Y} and~\eqref{eq:bound-Mf} we have that if initial conditions are compactly supported, then there are indeed some deterministic locally bounded functions $A,B$ as in the previous proposition.

\begin{proof}
For any two solutions $(Y_t,M_t)_{t\geq0}$ and $(Y'_t,M'_t)_{t\geq0}$, driven by the same Poisson measure $\NN$ and identical initial conditions $(Y_0,M_0)=(Y_0',M_0')$, it holds
\begin{eqnarray*}\nonumber
 \Ee\big[|M_t - M_t'|\big] &\leq& \alpha\,\eps\int_0^t\int_0^s\Ee[|a(Y_{s-w},M_{s-w})-a(Y'_{s-w},M'_{s-w})|]\,b(dw)ds
 \\
 &\leq&\alpha\,\eps\int_0^t \Ee[|a(Y_{s},M_{s})-a(Y'_{s},M'_{s})|]\,ds
 \end{eqnarray*}
 and
 \begin{equation}\nonumber
 \Ee\big[|Y_t - Y_t'|\big] \leq 
\int_0^t \Ee\big[|Y_{s}+Y_{s}'|\,|a(Y_{s},M_{s})-a(Y'_{s},M'_{s})|\big]\,ds.
 \end{equation}
 
 We know that $a$ is a regular differentiable function, therefore it is Lipschitz continuous and bounded on compacts. Since both coordinates are bounded for some deterministic $(A(t),B(t))$ locally bounded functions, it follows that there exists $C_T$ a constant depending only on an upper bound of $A$ and $B$ and the time horizon $T>0$ such that
\begin{equation}\nonumber
 \Ee\big[|Y_t-Y'_t|+|M_t-M'_t|\big]\,\leq\, C_T\int_0^t\Ee\big[|Y_s-Y'_s|+|M_s-M'_s|\big]\,ds,\qquad\forall\,0\leq t \leq T,
\end{equation}
 and the conclusion follows by applying Gr\"onwall's lemma.
\end{proof}

These strong estimates are actually a consequence of the fast that assuming bounded initial conditions essentially reduces the system to considering Lipschitz continuous intensity rate. We can reduce these assumptions by asking fast decay on the initial conditions. More precisely, since
\begin{equation}\nonumber
 Y_t\leq Y_0+t\leq R\quad\Rightarrow\quad \{Y_0+t\leq R\}\subset\{Y\leq R\}\quad\Rightarrow\quad \Pp(Y_0+t\leq R)\leq\Pp(Y_t\leq R),
\end{equation}
then, if initial conditions have an fast decay at infinity as~\eqref{eq:Y0expomoments}, then
\begin{equation}\nonumber
% \Pp\big((Y_0+M_0)\geq R\big)\,=\, \Pp\Big(e^{\omega(Y_0^\xi+M_0^\xi)}\geq e^{\omega R^\xi}\Big)\,\leq\,e^{-\omega R^\xi}\Ee\Big[e^{\omega(Y_0^\xi+M_0^\xi)}\Big]\,\leq\,C_\omega\,e^{-\omega R^\xi}.
 \Pp\big(Y_0\geq R\big)\,=\, \Pp\Big(e^{\omega Y_0^\xi}\geq e^{\omega R^\xi}\Big)\,\leq\,e^{-\omega R^\xi}\Ee\big[e^{\omega Y_0^\xi}\big]\,\leq\,C_\omega\,e^{-\omega R^\xi},
  \end{equation}
for some $\omega$ positive and $\xi$ given by~\eqref{eq:polyn}, it follows that
\begin{equation}\nonumber
 \Pp(Y_t\geq R)\,\leq\,\Pp\big(Y_0\geq R-t\big)\,\leq\, C_\omega\, e^{-\omega(R-t)^\xi}.
\end{equation}
and an equivalent inequality holds true for $M_t$,
\begin{equation}\nonumber
 \Pp(M_t\geq R)\,\leq\,\Pp\big(M_0\geq R-C_2'(t+\Ee[Y_0])\big)\,\leq\, C_\omega\, e^{-\omega(R-C_2'(t+\Ee[Y_0]))^\xi}.
\end{equation}
%\begin{equation}\nonumber
% M_t\leq M_0+C_2'(t+\Ee[Y_0])\quad\Rightarrow\quad \Pp\big(M_0+C_2'(t+\Ee[Y_0])\leq R\big)\leq\Pp(M_t\leq R),
%\end{equation}
%and
%\begin{equation}\nonumber
% \Pp(M_t\geq R)\,\leq\,\Pp\big(M_0\geq R-C_2' (t+\Ee[Y_0])\big)\leq C_\omega\,e^{-\omega (R-C_2' (t+\Ee[Y_0]))^\xi}.
%\end{equation}
%summarising, condition~\eqref{eq:expoDecay} holds.

\begin{prop}\label{prop:UniqBounded2}
 Under mean field condition~\eqref{eq:syspar-H4} and growth restriction~\eqref{eq:polyn}, path-wise uniqueness holds true for the mean field system~\eqref{eq:meanfield-Y}-\eqref{eq:meanfield-M}, in the class of processes $(Y_t,M_t)_{t\geq0}$ such that for any $t\geq0$ it holds
 \begin{equation}\nonumber
  \sup_{0\leq s\leq t}\Big\{\Ee\big[e^{\omega (Y_s^\xi+M_s^\xi)}\big]\Big\}<\infty.
 \end{equation}
\end{prop}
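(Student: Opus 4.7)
The approach is to localize via a stopping-time argument, exploit the local Lipschitz nature of $a$ on bounded regions that follows from~\eqref{eq:syspar-H4} together with the growth bound~\eqref{eq:polyn}, apply Gr\"onwall, and then let the cutoff diverge using the exponential moment assumption. Let $(Y_t,M_t)$ and $(Y'_t,M'_t)$ be two solutions in the prescribed class, driven by the same Poisson measure $\NN$ and sharing the same initial data. For $R>0$ introduce
\begin{equation*}
\tau_R := \inf\big\{t\geq 0\,:\, Y_t\vee Y'_t\vee M_t\vee M'_t>R\big\}.
\end{equation*}
On $\{s<\tau_R\}$, condition~\eqref{eq:polyn} gives $a(Y_s,M_s)\vee a(Y'_s,M'_s)\leq A_R:=C_\xi(1+2R^{\xi-2})$, and~\eqref{eq:syspar-H4} yields a Lipschitz estimate on $[0,R]^2$ with constant polynomial in $R$.

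The first step is to control $\Ee|Y_{t\wedge \tau_R}-Y'_{t\wedge\tau_R}|$. Writing $Y-Y'$ from~\eqref{eq:meanfield-Y} via the Poisson integral identity $\int_0^\infty |Y\,\indiq_{\{u\leq a(Y,M)\}}-Y'\,\indiq_{\{u\leq a(Y',M')\}}|\,du$ and applying~\eqref{eq:syspar-H4} on $\{s<\tau_R\}$, one finds
\begin{equation*}
\Ee|Y_{t\wedge\tau_R}-Y'_{t\wedge\tau_R}|\leq K_R\int_0^t\Ee\big[(|Y_s-Y'_s|+|M_s-M'_s|)\indiq_{s<\tau_R}\big]\,ds,
\end{equation*}
with $K_R$ of order $R^{\xi-1}$. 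The second step is to control $\delta(t):=|M_t-M'_t|$; since both solutions obey~\eqref{eq:meanfield-M} with the same $M_0$, $\delta$ is a \emph{deterministic} function. Applying~\eqref{eq:syspar-H4} and splitting $\Ee|a(Y_s,M_s)-a(Y'_s,M'_s)|$ according to $\{s<\tau_R\}$ and $\{s\geq\tau_R\}$ gives
\begin{equation*}
\delta(t)\leq \alpha(1+C_0\eps)\int_0^t\delta(s)\,ds+C_0\alpha\eps\,A_R\int_0^t\Ee|Y_{s\wedge\tau_R}-Y'_{s\wedge\tau_R}|\,ds+C_T\sqrt{\Pp(\tau_R\leq T)},
\end{equation*}
where the tail term is produced by Cauchy--Schwarz: the polynomial bound $a\lesssim 1+Y^{\xi-2}+M^{\xi-2}$ from~\eqref{eq:polyn}, the a priori estimates~\eqref{eq:bound-Y}--\eqref{eq:bound-Mf}, and the exponential moments~\eqref{eq:Y0expomoments} together ensure $\Ee[a(Y_s,M_s)^2(Y_s+Y'_s)^2]$ is finite uniformly on $[0,T]$.

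Setting $u_R(t):=\Ee|Y_{t\wedge\tau_R}-Y'_{t\wedge\tau_R}|+\delta(t)$ and combining the two inequalities yields
\begin{equation*}
u_R(t)\leq K'_R\int_0^t u_R(s)\,ds+C_T\sqrt{\Pp(\tau_R\leq T)},\qquad 0\leq t\leq T,
\end{equation*}
with $K'_R$ of polynomial order $R^{\xi-1}$. Gr\"onwall's inequality gives $u_R(t)\leq C_T\,e^{K'_RT}\sqrt{\Pp(\tau_R\leq T)}$. The pathwise bounds~\eqref{eq:bound-Y} and~\eqref{eq:bound-Mf} imply $\{\tau_R\leq T\}\subset \{Y_0\geq R-T\}\cup\{M_0\geq R-c(T)\}$, and~\eqref{eq:Y0expomoments} together with Markov's inequality then gives $\Pp(\tau_R\leq T)\leq Ce^{-\omega(R-c(T))^\xi}$. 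Since $R^{\xi-1}T$ is dominated by $\omega R^\xi/2$ for large $R$, the product $e^{K'_RT}\sqrt{\Pp(\tau_R\leq T)}$ tends to $0$ as $R\to\infty$, hence $u_R(t)\to 0$. This forces $\delta(t)=0$ and $Y_t=Y'_t$ almost surely for every $t\geq 0$.

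\textbf{Main obstacle.} The principal difficulty is that the mean-field term $\Ee[a(Y_{s-w},M_{s-w})]$ in~\eqref{eq:meanfield-M} is integrated over all of $[0,t]$ and cannot be cut off at $\tau_R$ directly. The resolution uses the deterministic structure of $M-M'$ (so that no stopping of $M$ is needed in the first place) combined with a Cauchy--Schwarz estimate of the tail $\{s\geq\tau_R\}$ that is finite precisely because the polynomial growth~\eqref{eq:polyn} is strictly weaker than the exponential moments~\eqref{eq:Y0expomoments}, guaranteeing that the Gr\"onwall constant $K'_R$ is dominated by the decay rate of $\Pp(\tau_R\leq T)$.
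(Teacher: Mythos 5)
Your proof is correct and follows essentially the same route as the paper's: localize at level $R$ (your stopping time $\tau_R$ plays the role of the paper's event $\EE(Y,M)$), use~\eqref{eq:syspar-H4} and~\eqref{eq:polyn} to get a Gr\"onwall constant of order $R\,a(R,R)\sim R^{\xi-1}$, control the tail by H\"older together with the exponential moments, and conclude because $e^{C_T R a(R,R)-\frac{\omega}{2}R^{\xi}}\to 0$. The only (harmless) variations are cosmetic: you stop the $Y$-process rather than inserting indicators, use Cauchy--Schwarz where the paper uses fourth-power H\"older, and additionally observe that $M_t-M_t'$ is deterministic.
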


\begin{proof}
 We start by noticing that since exponential moments are bounded, then all polynomial moments are bounded. In particular,
 \begin{equation}\nonumber
  \Ee\big[a(Y_t,M_t)^4\big]\leq \Ee\big[C_\xi^4(1+Y_t^{\xi+2}+M_t^{\xi+2})^4\big]<\infty,
 \end{equation}
 for any time $t\geq0$. Thanks to hypothesis~\eqref{eq:syspar-H4}, we also have that
 \begin{equation}\nonumber
\big|a(Y_{s},M_{s})-a(Y_{s}',M_{s}')|\leq
C_0\,\big(a(Y_{s},M_{s})+a(Y_{s}',M_{s}')\big)|Y_{s}-Y_{s}'\big|+C_0|M_{s}-M_{s}'|.
\end{equation}

Consider now 
$$\EE(Y,M)=\big\{\sup_{0\leq s\leq t}Y_s\leq R,\quad\sup_{0\leq s\leq t}M_s\leq R\big\},$$
 then
 \begin{multline}\nonumber
  \Ee\big[|Y_t - Y_t'|\big] \leq 
%  a(R,R)\int_0^t \Ee\big[|Y_{s}-Y_{s}'|\big]\,ds+\int_0^t \Ee\big[|Y_{s}-Y_{s}'|^4\big]^{1/4}\Ee\big[a(Y_{s},M_{s})^4\big]^{1/4}\Pp(\EE^c)^{1/2}\,ds
%\\
4\,C_0\,R\,a(R,R)\int_0^t \Ee\big[|Y_{s}-Y_{s}'\big|\big]\,ds+2\,C_0\,R\int_0^t \Ee\big[|M_{s}-M_{s}'|\big]\,ds
\\
+\int_0^t \Ee\big[(Y_s+Y_{s}')^4]^{1/4}\Ee\big[|a(Y_{s},M_{s})-a(Y'_{s},M'_{s})|^4\big]^{1/4}\big]\Pp(\EE^c(Y,M))^{1/4}\Pp(\EE^c(Y',M'))^{1/4}\,ds,
 \end{multline}
 and
 \begin{multline}\nonumber
 \Ee\big[|M_t - M_t'|\big]  \leq  
2\,\alpha\,\eps \,C_0\,a(R,R)\int_0^t  \Ee\big[|Y_{s'}-Y_{s'}'\big|\big]\,ds
+\alpha\,\eps\,C_0\int_0^t \Ee\big[|M_{s'}-M_{s'}'|\big]\,ds
\\
 +\alpha\,\eps\int_0^t \Ee\big[|a(Y_{s'},M_{s'})-a(Y'_{s'},M'_{s'})|^2\big]^{1/2}\Pp(\EE^c(Y,M))^{1/4}\Pp(\EE^c(Y',M'))^{1/4}\,ds.
 \end{multline}

Using Gronwall's lemma, we conclude that there exists a constant $C_T$ depending only on the parameters of the system, such that for $R$ large enough
 \begin{equation}\nonumber
  \Ee\big[|Y_{t}-Y_{t}'\big|+|M_{t}-M_{t}'\big|\big]\,\leq\, C_Te^{C_TRa(R,R)}\,\Pp(\EE^c(Y,M))^{1/4}\Pp(\EE^c(Y',M'))^{1/4},
 \end{equation}
 but using the fast decay at infinite
 \begin{equation}\nonumber
\Ee\big[|Y_{s}-Y_{s}'\big|+|M_{s}-M_{s}'\big|\big]\,\leq\, C_T e^{C_TRa(R,R)} e^{-\frac\omega2 R^\xi},
 \end{equation}
 where the constant depends on the time horizon $T$, but not on $R$. Finally thanks to hypothesis~\eqref{eq:polyn}, we get the conclusion:
\begin{equation}\nonumber
 \exp\Big(C_TR\,a(R,R)-\frac\omega2 R^\xi\Big)\xrightarrow{R\rightarrow\infty}0.
 \end{equation}
\end{proof}

\section{The mean field system \& the compactness method}
\label{sec:Chaos}
\setcounter{equation}{0}
\setcounter{theo}{0}

So far, we know that under relatively weak assumptions on $a(\cdot,\cdot)$, for each $N\geq1$, and $(g_0,m_0)$-\textit{chaotic initial states}, there exists a unique solution to~\eqref{eq:syspar-X}-\eqref{eq:syspar-M}. Now we study the convergence of the this set of solutions as $N$ goes to infinity, i.e. the tightness of the sequence of empirical measures $\mu_N$. To that aim, we start by recalling (see e.g. Jacob-Shiryaev~\cite[Theorem 4.5, page 356]{MR1943877}): 
\smallskip

\noindent\textbf{Aldous tightness criterion:}
\textit{the sequence of adapted processes $(X^{1,N}_t,M^N_t)$ is tight if}
 \begin{enumerate}
  \item \textit{for all $T>0$, all $\epsilon>0$, it holds}
  \begin{equation}\nonumber
  \lim_{\delta\rightarrow0^+}\,\limsup_{N\rightarrow\infty}\sup_{(S,S')\in A_{\delta,T}}\Pp\big(|M_S^{N}-M_{S'}^{N}|+|X_S^{1,N}-X_{S'}^{1,N}|>\epsilon\big)=0;
  \end{equation}
  \textit{where $A_{\delta,T}$ is the set of stopping times $(S,S')$ such that $0\leq S\leq S'\leq S+\delta\leq T$ a.s., and}
  \item \textit{for all $T>0$, }
  \begin{equation}\nonumber
  \lim_{K\rightarrow\infty}\sup_{N\geq1}\,\Pp\Big(\sup_{t\in[0,T]}(M_t^N+X_t^{1,N})\geq K\Big)=0.
  \end{equation}
 \end{enumerate}

\begin{prop}\label{prop:meanfield-Tight}
 Under hypothesis~\eqref{eq:syspar-H1} and~\eqref{eq:syspar-H3}. Consider two probability distributions $g_0,m_0$ such that
 \begin{equation}\nonumber
  \int_{\R_+^2}\, (x^2+m^2+a(x,m)^2)g_0(dx)\,m_0(dx)\,<\,\infty,
 \end{equation}
 and the correspondent family $(X^{1,N}_t,\ldots,X^{N,N}_t,M^N_t)_{t\geq0}$ of solutions to~\eqref{eq:syspar-X}-\eqref{eq:syspar-M}, starting with some i.i.d. random variables $X_0^{i,N}$ with common law $g_0$, and independent of the $m_0$-distributed $M_0^N$. Then 
 \begin{itemize}
  \item[(i)] the sequence of processes $(X_t^{1,N},M^N_t)_{t\geq0}$ is tight in $\mathbb D(\R_+^2)$;
  \item[(ii)] the sequence of empirical measures $\mu_N$ is tight in $\Pp\big(\mathbb D(\R_+^2)\big)$.
 \end{itemize}
\end{prop}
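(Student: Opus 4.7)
The plan is to verify the Aldous tightness criterion stated just before the proposition for the marginal $(X^{1,N}_t, M^N_t)$, which gives (i), and then deduce (ii) from (i) by exchangeability. For the boundedness condition, I would use the pathwise estimates \eqref{eq:bound-X} and \eqref{eq:bound-M} from Section~\ref{sec:Particles}: one has $\sup_{t\leq T}X^{1,N}_t \leq X^{1,N}_0 + T$ and $\sup_{t\leq T}M^N_t \leq M^N_0 + C_1'(T + \bar X^N_0 + Z^N_T)$, and since $\Ee[Z^N_T]=T$, $\Ee[\bar X^N_0]=\int x\,g_0(dx)$ and $\Ee[M^N_0]=\int m\,m_0(dm)$ are finite under the moment hypothesis on $(g_0,m_0)$, Markov's inequality yields $\Pp(\sup_{t\leq T}(X^{1,N}_t+M^N_t)\geq K)\leq C_T/K$ uniformly in $N$.

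For the increment condition, the key idea is a truncation via the good event $A^N_K:=\{\sup_{t\leq T}(X^{1,N}_t+M^N_t)\leq K\}$. By the monotonicity of $a$ in both arguments, on $A^N_K$ one has $a(X^{i,N}_s,M^N_s)\leq a(K,K)=:a_K$ for every $i$ and every $s\leq T$ (with the natural convention that the integrand vanishes when $s-\tau_j<0$). For stopping times $S\leq S'\leq S+\delta$, the $X$-equation gives $|X^{1,N}_{S'}-X^{1,N}_S|\leq \delta + K\cdot \int_S^{S'}\int_0^{a_K}\NN^1(du,ds)$ on $A^N_K$, the Poisson integral having expectation $\leq a_K\delta$ by optional stopping; similarly, the $M$-equation gives $|M^N_{S'}-M^N_S|\leq \alpha K\delta + (J^N_{S'}-J^N_S)$ on $A^N_K$, with $\Ee[(J^N_{S'}-J^N_S)\indiq_{A^N_K}]\leq \alpha\eps\, a_K\delta$ (the factor $1/N$ being compensated by summing the $N$ Poisson integrals). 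Combined with $\Pp((A^N_K)^c)\leq C_T/K$, Markov's inequality produces a bound of the form $C_T/K + C_{K,\eps}\delta$ on the probability appearing in condition~(1); choosing $K$ large first and then $\delta$ small concludes.

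Part (ii) follows from (i) by the classical result (going back to Sznitman) that for exchangeable particle systems, tightness of the law of a single marginal implies tightness of the empirical measures as $\Pp(\mathbb D(\R_+^2))$-valued random variables; here the coupling variable $M^N$ is shared by every pair but this does not alter the exchangeability of the family $\{(X^{i,N},M^N)\}_{i=1,\dots,N}$. The main obstacle is the absence of any boundedness or Lipschitz assumption on $a$, which rules out a naive control of jump rates or drifts; the truncation through $A^N_K$ circumvents this precisely because the monotonicity of $a$ together with the sharp pathwise control on $X^{1,N}$ and $M^N$ from Section~\ref{sec:Particles} allow one to replace $a$ by the deterministic constant $a_K$ on a high-probability event.
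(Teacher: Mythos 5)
Your treatment of the boundedness condition and of part (ii) via exchangeability and Sznitman's result matches the paper, and your truncation through $A^N_K$ does correctly handle the increment of $X^{1,N}$ and the drift part of the increment of $M^N$ (both of which involve only the pair $(X^{1,N},M^N)$ that $A^N_K$ actually controls). But there is a genuine gap in the jump part of the $M^N$-increment. Your good event is $A^N_K=\{\sup_{t\leq T}(X^{1,N}_t+M^N_t)\leq K\}$, which constrains only the first neuron, yet you then assert that on $A^N_K$ one has $a(X^{i,N}_s,M^N_s)\leq a(K,K)$ \emph{for every} $i$. This is false for $i\neq 1$: the ages $X^{i,N}_s$ of the other neurons are unconstrained on $A^N_K$, and since $g_0$ is not assumed compactly supported you cannot enlarge the event to control $\max_i X^{i,N}_t$ while keeping $\Pp((A^N_K)^c)$ small uniformly in $N$ (for fixed $K$ one has $\Pp(\max_i X^{i,N}_0>K)=1-g_0([0,K])^N\to 1$ as $N\to\infty$ unless $g_0$ is supported in $[0,K]$). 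Consequently the claimed bound $\Ee[(J^N_{S'}-J^N_S)\indiq_{A^N_K}]\leq\alpha\eps\,a_K\delta$, where $J^N$ collects the $N$ Poisson integrals driving $M^N$, is unjustified: the intensities $a(X^{j,N},M^N)$ of the other neurons are exactly the quantities your truncation cannot reach.

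This term is in fact the crux of the proof and the reason the statement assumes $\int a(x,m)^2\,g_0(dx)\,m_0(dm)<\infty$. The paper bounds the expected number of spikes of the whole population on $[S,S']$ by writing $\Ee\big[\frac1N\sum_j\int_S^{S'}a(X^{j,N}_s,M^N_s)\,ds\big]\leq \delta^{1/2}\,\Ee\big[\big(\int_0^T\langle\mu_N(s),a^2\rangle\,ds\big)^{1/2}\big]$ via Cauchy--Schwarz in time, and then invokes the a priori moment bound of Lemma~\ref{lem:apriori-PS2} to make the right-hand side finite uniformly in $N$. The pathwise estimate~\eqref{eq:bound-Exp} only yields $\int_0^T\Ee[\langle\mu_N(s),a\rangle]\,ds\leq C(1+T)$, which does not prevent the firing mass from concentrating near the random time $S$; some quantitative control of $\Ee[\langle\mu_N(s),a^2\rangle]$ (or an equivalent uniform-in-$N$ integrability in time) is indispensable here, and your proposal never produces one. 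To repair the argument you would have to reinstate this step, at which point you are essentially back to the paper's proof.
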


Let us remark that the sequence $Z^i:=(X_t^{i,N},M_t^N)$ is exchangeable, then (ii) follows from (i) thanks to Sznitman~\cite[Proposition 2.2-(ii)]{MR1108185}).

\begin{proof}
 We only need to show the Aldous tightness criterion, let us notice that the second condition is easy to show. Indeed, from estimate~\eqref{eq:bound-X} we get that
 \begin{equation}\nonumber
 \Ee\big[\sup_{0\leq s\leq T} X_s^{1,N}\big]\,\leq \Ee[X_0^{1,N}]+T\,\leq\,\Big(\int_{\R_+}\, x^2\,g_0(dx)\Big)^{1/2}+T\,<\,\infty,
 \end{equation}
 and recalling~\eqref{eq:bound-M}, we notice that
 \begin{equation}\nonumber
 \Ee\big[\sup_{t\in[0,T]}M_t^{N}\big]\,\leq\,\Ee[M_0^N]+C_1' (T+\Ee[\bar X_0^N]+\Ee[Z_t^N])\,<\,\infty,
\end{equation}
because $Z^N_t$ is the mean of $N$ i.i.d Poisson($T$)-distributed random variable. We deduce that the expectation of the lefthand side is finite, independently of the value of $N$.
\smallskip

Under the assumptions on the initial distributions, the first point is not very difficult to prove either. Indeed, by definition
\begin{equation}\nonumber
 |X_S^{1,N}-X_{S'}^{1,N}|\,\leq\,(S'-S)+\int_{S}^{S'}\int_0^\infty X_{s-}^{1,N}\indiq_{\{u\leq a(X_{s-}^{1,N},M_{s-}^N)\}}\NN^1(du,ds),
\end{equation}
and
\begin{equation}\nonumber
 |M^N_S-M^N_{S'}|\,\leq\,\frac{\alpha\,\eps}{N}\sum_{j=1}^N\int_{S}^{S'}\int_0^\infty\indiq_{u\leq a(X_{s-\tau_j}^{j,N},M^N_{s-\tau_j})}\,\tilde\NN^j(du,ds)+\alpha\int_{S}^{S'}M_s^N\,ds.
\end{equation}
Moreover, using Markov's inequality
\begin{multline}\nonumber
 \Pp\Big(\int_{S}^{S'}\int_0^\infty X_{s-}^{1,N}\indiq_{\{u\leq a(X_{s-}^{1,N},M_{s-}^N)\}}\NN^1(du,ds)>0\Big) \,\leq\, \Ee\Big[\int_S^{S+\delta}a(X_{s}^{1,N},M_{s}^N)\,ds\Big]
 \\
 \leq\, \delta^{1/2}\times\Ee\Big[\Big(\int_0^{T}a(X_{s}^{1,N},M_{s}^N)^2\,ds\Big)^{1/2}\Big],
\end{multline}
which is finite independently of the value of $N$ thanks to Lemma~\ref{lem:apriori-PS2}. The first term in the second inequality is handled in the same way. Finally,
\begin{equation}\nonumber
 \Ee\Big[\int_S^{S'}M^N_s\,ds\Big]\,\leq\,\delta^{1/2}\,\Ee\Big[\Big(\int_0^{T}(M^N_s)^2\,ds\Big)^{1/2}\Big],
\end{equation}
which is also finite thanks to the assumption on the initial condition.
\smallskip

\end{proof}

The natural next step, in the proof of existence of solutions to the nonlinear SDE, is to prove that any limit point of the tight sequence $\mu_N$ is a solution of the mean field limit system, which is usually called consistency of the particle system. This result is stated in the following

\begin{prop}\label{prop:meanfield-Sol}
 Under the same hypotheses of Proposition~\ref{prop:meanfield-Tight}, any limit point $\mu$ of $\mu_N$ a.s. belongs to 
 \begin{multline}\nonumber
 \SS:=\Big\{\LL((Y_t,M_t)_{t\geq0})\,:\,(Y_t,M_t)_{t\geq0}\text{ is a solution to~\eqref{eq:meanfield-Y}-\eqref{eq:meanfield-M} such that}
 \\
 \loi(Y_0)=g_0,\quad\loi(M_0)=m_0\,\quad
 \text{and}\quad
 \int_{0}^t\int_0^s \Ee\big[a(Y_{s-w},M_{s-w})\,b(dw)\big]\,ds\,<\,\infty,\,\forall\,t\geq0\Big\}.
 \end{multline}
 \end{prop}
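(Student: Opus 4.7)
The plan is to characterise membership in $\SS$ through a family of martingale-problem identities, verify these identities at the level of the particle system via It\^o's formula~\eqref{eq:Ito-gen}, and then pass to the limit using Proposition~\ref{prop:meanfield-Tight} and the moment bounds of Lemma~\ref{lem:apriori-PS2}. Concretely, for $f\in C^2_c(\R_+^2)$, a tuple $0\leq s_1<\ldots<s_p\leq s<t$, and $\phi\in C_b(\R_+^{2p})$, I introduce the functional on $\mathbb{D}(\R_+^2)\times\Pp(\mathbb{D}(\R_+^2))$ defined by
\begin{multline*}
 \Phi^{f,\phi}_{s,t}(Z,\nu) := \phi(Z_{s_1},\ldots,Z_{s_p})\Big\{f(Z_t)-f(Z_s) \\
 -\int_s^t\Big[\partial_y f(Z_r)-\alpha\,m_r\partial_m f(Z_r)+(f(0,m_r)-f(Z_r))\,a(Z_r)\Big]dr \\
 -\alpha\eps\int_s^t \partial_m f(Z_r)\int_0^r\langle\pi^\nu_{r-w},a\rangle\,b(dw)\,dr\Big\},
\end{multline*}
where $Z_r=(y_r,m_r)$ and $\pi^\nu_r$ is the $r$-marginal of $\nu$. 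Reversing It\^o's formula shows that $\nu\in\SS$ if and only if $\int\Phi^{f,\phi}_{s,t}(Z,\nu)\,\nu(dZ)=0$ for every admissible tuple and the correct initial marginal holds.

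The second step applies~\eqref{eq:Ito-gen} to $f(X^{1,N}_t,M^N_t)$, multiplies by $\phi$ evaluated at the earlier times, and takes expectations. Exchangeability of the $(X^{i,N})$'s rewrites each discrete sum $\frac{1}{N}\sum_j\{\cdots\}$ as $\langle\mu_N(\cdot),\cdots\rangle$, while the Taylor expansion $f(y,m+\alpha\eps/N)-f(y,m)=\frac{\alpha\eps}{N}\partial_m f(y,m)+O(N^{-2})$ of the jumps in the $M^N$-component identifies the mean-field drift. The resulting estimate $\Ee[\Phi^{f,\phi}_{s,t}(\cdot,\mu_N)]=O(N^{-1/2})$ gathers three contributions: the self-jump $j=i$ in~\eqref{eq:syspar-M}, which carries an extra $1/N$; the $O(N^{-2})$ Taylor remainder summed over all spikes; and the martingale $\MM^{1,N}_f$ of~\eqref{eq:Ito-Martingale}. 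Each is controlled by the $L^2$-bound on $\langle\mu_N,a\rangle$ supplied by Lemma~\ref{lem:apriori-PS2} with $q=2$.

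In the third step I work along a subsequence realising $\mu_{N_k}\to\mu$ in distribution and, by Skorokhod's representation, almost surely. The map $(Z,\nu)\mapsto\Phi^{f,\phi}_{s,t}(Z,\nu)$ is $\mu$-a.s.\ continuous outside the at-most-countable set of times at which $\mu$ charges paths jumping at one of $s_1,\ldots,s_p,s,t$, because $\Phi$ depends on $Z$ only through these evaluations and through integrals on bounded intervals, and on $\nu$ only through the scalar quantity $\int_0^r\langle\pi^\nu_{r-w},a\rangle b(dw)$. Uniform integrability of $\langle\mu_N(r),a\rangle$ on compact time intervals, again from Lemma~\ref{lem:apriori-PS2}, upgrades weak convergence to convergence of the $a$-integrals, yielding $\Ee[\Phi^{f,\phi}_{s,t}(\cdot,\mu)]=0$ and, as a by-product, the integrability~\eqref{eq:delay}. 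The initial condition passes to the limit via the law of large numbers applied to the i.i.d.\ $X^{i,N}_0\sim g_0$ together with the fact that the $m$-marginal of $\mu_N$ at $t=0$ is $\delta_{M^N_0}$ with $M^N_0\sim m_0$ independent of the $X^{i,N}_0$.

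The main obstacle is the treatment of the delay term. In the particle system delays enter through the indicators $\indiq_{\{s\leq t-\tau_j\}}$ attached to each Poisson atom in~\eqref{eq:syspar-M}, whereas the mean-field limit contains a clean convolution with $b$. The identification proceeds by conditioning on $(\tau_j)$ (independent of the Poisson processes and of the initial data), exchanging the orders of integration, performing the change of variable $u=s-\tau_j$, and using exchangeability to rewrite the inner average as $\langle\mu_N(u),a\rangle$; the joint continuity of $(r,\nu)\mapsto\int_0^r\langle\pi^\nu_{r-w},a\rangle b(dw)$ on the moment-bounded class of measures involved, established by dominated convergence and the bounds of Lemma~\ref{lem:apriori-PS2}, is the essential technical input.
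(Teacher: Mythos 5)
Your proposal is correct and follows essentially the same route as the paper: a martingale-problem characterisation of $\SS$ (the paper's Lemma~\ref{lem:conditions-T}), verification of the corresponding functional identity for $\mu_N$ via It\^o's formula~\eqref{eq:Ito-gen} with the same error decomposition (self-jump of order $1/N$, Taylor remainder of the $M^N$-jumps, orthogonal compensated martingales giving $O(N^{-1/2})$, and the delay term handled by conditioning on the i.i.d.\ $\tau_j$'s), followed by passage to the limit using continuity of the functional off the jump times and the moment bounds of Lemma~\ref{lem:apriori-PS2}. The only cosmetic difference is that you state the functional as $\Phi^{f,\phi}_{s,t}(Z,\nu)$ integrated against $\nu$ rather than as the paper's double integral $F(Q)$ over $Q\otimes Q$; these are the same object.
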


A previous step that simplifies the proof of this result is the
\begin{lem}\label{lem:conditions-T}
  Let us consider $t\geq0$ fixed and define $\pi_t:\mathbb D(\R_+^2)\rightarrow\R_+^2$, by
 \begin{equation}\nonumber
 \pi_t(\gamma,\beta)=(\gamma_t,\beta_t).
 \end{equation}
Then, $Q\in \Pp(\mathbb D(\R_+^2))$ belongs to $\SS$ if the following conditions are satisfied:
 \begin{itemize}
  \item[(a)] $Q\circ\pi^{-1}_0\,=\,(g_0,m_0)$;
  \item[(b)] for all $t\geq0$, $$\int_{\mathbb D(\R_+^2)}\int_0^t\int_0^s  a(\gamma_{s-w},\beta_{s-w})\,b(dw)\,ds\,Q(d\gamma,d\beta)\,<\,\infty;$$
  \item[(c)] for any $0\leq s_1<\ldots<s_k<s<t,$ any $\varphi_1,\ldots,\varphi_k\in C_b(\R_+^2)$, and any $\varphi\in C_b^2(\R_+^2)$, it holds
  \begin{multline}\nonumber
   F(Q)\,:=\,\int_{\mathbb D(\R_+^2)}\int_{\mathbb D(\R_+^2)} Q(d\gamma^1,d\beta^1)\,Q(d\gamma^2,d\beta^2)\,\varphi_1(\gamma^1_{s_1},\beta^1_{s_1})\ldots\varphi_k(\gamma^1_{s_k},\beta^1_{s_k})\\
   \Big[\varphi(\gamma^1_{t},\beta^1_{t})-\varphi(\gamma^1_{s},\beta^1_{s})-\int_s^t\partial_\beta\varphi(\gamma^1_{s'},\beta^1_{s'})\big[-\alpha\beta^1_{s'}+\alpha\,\eps\int_0^{s'} a(\gamma^2_{s'-w},\beta^2_{s'-w})\,b(dw)\big]\,ds'
   \\
   -\int_s^t\partial_\gamma\varphi(\gamma^1_{s'},\beta^1_{s'})\,ds'-\int_s^t a(\gamma^1_{s'},\beta^1_{s'})\big[\varphi(0,\beta^1_{s'})-\varphi(\gamma^1_{s'},\beta^1_{s'})\big]\,ds'\Big]=0.
  \end{multline}
 \end{itemize}
\end{lem}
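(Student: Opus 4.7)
The plan is to recognise condition (c) as the martingale problem associated with the mean-field SDE~\eqref{eq:meanfield-Y}-\eqref{eq:meanfield-M}, and then invoke the standard equivalence between this martingale problem and the SDE representation driven by a Poisson random measure.

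First, denote by $(Y_t,M_t):=(\gamma_t,\beta_t)$ the canonical coordinate process on $\mathbb{D}(\R_+^2)$ endowed with the canonical filtration $\FF_t=\sigma(\gamma_s,\beta_s\,:\,s\leq t)$ and with law $Q$. By condition~(b) the deterministic function
\begin{equation}\nonumber
J_Q(s):=\int_{\mathbb D(\R_+^2)}\int_0^s a(\gamma^2_{s-w},\beta^2_{s-w})\,b(dw)\,Q(d\gamma^2,d\beta^2)=\int_0^s\Ee_Q\big[a(Y_{s-w},M_{s-w})\big]\,b(dw)
\end{equation}
is locally integrable in $s$. With this notation, introduce the time-dependent generator
\begin{equation}\nonumber
\GG_s\varphi(y,m):=\partial_y\varphi(y,m)+\partial_m\varphi(y,m)\big[-\alpha m+\alpha\eps J_Q(s)\big]+a(y,m)\big[\varphi(0,m)-\varphi(y,m)\big].
\end{equation}
Condition (c) then reads exactly as
$$\Ee_Q\big[\varphi_1(Y_{s_1},M_{s_1})\cdots\varphi_k(Y_{s_k},M_{s_k})\,(\tilde M^\varphi_t-\tilde M^\varphi_s)\big]=0,$$
where $\tilde M^\varphi_t:=\varphi(Y_t,M_t)-\varphi(Y_0,M_0)-\int_0^t\GG_{s'}\varphi(Y_{s'},M_{s'})ds'$. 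Since cylinder functions of the above form generate $\FF_s$ by a standard monotone class argument, this says precisely that $\tilde M^\varphi$ is a $(Q,\FF_t)$-martingale for every $\varphi\in C_b^2(\R_+^2)$.

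Next, I would pass from this martingale problem to the SDE. The generator $\GG_s$ decomposes into a continuous drift $(1,-\alpha m+\alpha\eps J_Q(s))$ and a jump part with state-dependent finite kernel $a(y,m)\delta_{(0,m)}$, corresponding to jumps $y\mapsto 0$ with rate $a(y,m)$ (the $m$-coordinate being unchanged by a jump of $Y$; the contribution to $M_t$ in~\eqref{eq:meanfield-M} comes only through the \emph{deterministic} drift $\alpha\eps J_Q(s)$, not through the jumps). Applying the standard martingale representation theorem for jump processes (see Jacod-Shiryaev~\cite{MR1943877}, Ikeda-Watanabe), we identify the compensator of the jump measure of $Y$ as $a(Y_{s-},M_{s-})\,du\,ds$ on $\R_+\times\R_+$. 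On a suitable enlargement of the probability space one can then construct a standard Poisson random measure $\NN$ on $\R_+^2$ with Lebesgue intensity, independent of $(Y_0,M_0)$, such that $Y_t$ jumps at a time $s$ precisely when $\NN$ has an atom at $(s,u)$ with $u\leq a(Y_{s-},M_{s-})$, and the jump consists in replacing $Y_{s-}$ by $0$. The continuous drift parts are then recovered directly from the $ds$-integrals in $\tilde M^\varphi$ (taking $\varphi(y,m)=y$ and $\varphi(y,m)=m$, approximated by $C_b^2$ functions), yielding exactly~\eqref{eq:meanfield-Y} and~\eqref{eq:meanfield-M} with $J_Q(s)=\int_0^s\Ee[a(Y_{s-w},M_{s-w})]b(dw)$ since $\loi(Y,M)=Q$.

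Finally, condition (a) gives $\loi(Y_0)=g_0$ and $\loi(M_0)=m_0$, while condition (b) is precisely the integrability restriction~\eqref{eq:delay} that appears in the definition of $\SS$. Hence $Q\in\SS$. The main technical hurdle is the construction of the driving Poisson measure $\NN$ from the abstract martingale property: one must first verify that the predictable compensator of the jump measure of $(Y_t,M_t)$ (which charges only sets of the form $\{(0,m)\}$) is $a(Y_{s-},M_{s-})\delta_{(0,M_{s-})}(dy',dm')ds$, and then thin an auxiliary Poisson process to represent this compensator; this is standard jump-SDE machinery and does not require additional regularity on $a$ beyond~\eqref{eq:syspar-H1}.
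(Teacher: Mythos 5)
Your proposal is correct and takes essentially the same route as the paper: conditions (a) and (b) give the initial law and the integrability requirement, condition (c) is read as the (nonlinear) martingale problem for the time-dependent jump-diffusion generator, and the equivalence between that martingale problem and the Poisson-driven SDE representation (Jacod--Shiryaev, Theorems II.2.42 and III.2.26) yields the conclusion. Your additional remarks on the monotone class argument and on constructing the driving Poisson measure by enlargement and thinning simply make explicit what the paper delegates to the cited theorems.
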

\begin{proof}
 Let us consider a process $(Y_t,M_t)_{t\geq0}$ of law $Q$ which satisfies (a), (b) and (c). From (a) and the independency of $m_0$ and $g_0$, we have
 \begin{equation}\nonumber
  \LL(Y_0)\,=\,g_0,\quad \LL(M_0)\,=\,m_0,
 \end{equation}
 and from (b) we have that
 \begin{equation}\nonumber
  \int_0^t\int_0^s\Ee\big[a(Y_{s-w},M_{s-w})\big]\,b(dw)\,ds<\infty,\quad\forall\,t\geq0.
 \end{equation}
 Finally, from (c) we have that for any $\varphi\in C_b^2(\R_+^2)$, the process
 \begin{multline}\nonumber
  \varphi(Y_t,M_t)-  \varphi(Y_0,M_0)-\int_0^t\partial_\beta\varphi(Y_s,M_s)\Big[-\alpha M_s+\alpha\,\eps\int_0^s\Ee[a(Y_{s-s'},M_{s-s'})]\,b(ds')\Big]ds
  \\
  -\int_0^t\partial_\gamma\varphi(Y_s,M_s)\,ds-\int_0^t a(Y_s,M_s)\big(\varphi(0,M_s)-\varphi(Y_s,M_s)\big)ds,
 \end{multline}
 is a local martingale. The conclusion follows as an application of Jacob-Shiryaev~\cite[Theorem II.2.42 page 86]{MR1943877} and~\cite[Theorem III.2.26 page 157]{MR1943877}. This result is classic, but for completeness of the present manuscript, we provide some remarks on Appendix~\ref{sec:AppJacob}.
\end{proof}

We finish this section by giving the proof of Proposition~\ref{prop:meanfield-Sol}:

\begin{proof}[Proof of Proposition~\ref{prop:meanfield-Sol}]
At this point, the problem is reduced to prove that conditions (a), (b) and (c) of Lemma~\ref{lem:conditions-T} hold. Since we do not have much information about $\mu$ we cannot work directly with it. On the other hand, we know that $\mu_N$ (up to subsequence) is converging to $\mu$, therefore, it seems natural to use equations~\eqref{eq:syspar-X}-\eqref{eq:syspar-M} adequately and then pass to the limit.\smallskip

\noindent\textit{Step 1.} Let us recall that for any $N\geq1$, the random variables $X_0^{i,N}$ are i.i.d. with common law $g_0$, and that $M^N_0$ is random variable with law $m_0$, independent of $(X_0^{i,N})$. It follows that
\begin{equation}\nonumber
 \mu\circ \pi_0^{-1}\,=\,\lim_{N\rightarrow\infty}\frac1N\sum_{i=1}^N\delta_{(X^{i,N}_0,M_0^N)}\,=\,(g_0,m_0).
\end{equation}
We also have, by the Fatou's lemma and inequality~\eqref{eq:bound-Exp}, that
\begin{multline}\nonumber
\Ee\Big[\int_{\mathbb D(\R_+^2)} \int_{0}^t\int_0^s\,\big[a(\gamma_{s-w},\beta_{s-w})\wedge K\big]\,b(dw)\,ds\,\mu(d\gamma,d\beta)\Big]
\\
\leq\,\liminf_{N\rightarrow\infty}\frac1N\sum_{i=1}^N\,\int_{0}^t\int_0^s\Ee[a(X^{i,N}_{s-w},M^N_{s-w})]\,b(dw)\,ds\,<\,\infty,
\end{multline}
for any $t\geq0$. Letting $K\rightarrow\infty$ we get (b).
\smallskip

\noindent\textit{Step 2.}  It only remains to prove (c), to that aim, we start by noticing that $F(\mu_N)$ writes
  \begin{multline}\nonumber
   F(\mu_N)\,=\,\frac{1}{N}\sum_{i=1}^N\varphi_1(X^{i,N}_{s_1},M^N_{s_1})\ldots\varphi_k(X^{i,N}_{s_k}M^N_{s_k})
   \\
   \Big[\varphi(X^{i,N}_{t}M^N_{t})-\varphi(X^{i,N}_{s},M^N_{s})-\int_s^t\partial_\gamma\varphi(X^{i,N}_{s'},M^N_{s'})\,d{s'}+\alpha\int_s^tM^N_{s'}\partial_\beta\varphi(X^{i,N}_{s'},M^N_{s'})\,d{s'}
   \\
   -\int_s^t \big[\varphi(0,M^N_{s'})-\varphi(X^{i,N}_{s'},M^N_{s'})\big]a(X^{i,N}_{s'},M^N_{s'})\,d{s'}
   \\
   -\int_s^t\partial_\beta\varphi(X^{i,N}_{s'}M^N_{s'})\frac{\alpha\,\eps}{N}\sum_{j=1}^N\int_0^{s'} a(X^{j,N}_{s'-w},M^N_{s'-w})\,b(dw)\,ds'
   \Big].
  \end{multline}
At the same time, using the It\^o's formula~\eqref{eq:Ito-gen} to the test function $\varphi(\cdot,\cdot)$, we have
\begin{multline}\nonumber
 \varphi(X^{i,N}_tM^N_t)\,=\, \varphi(X^{i,N}_0,M^N_0)+\int_0^t\partial_\gamma\varphi(X^{i,N}_{s'},M^N_{s'})\,ds' -\alpha\int_0^tM_{s'}^N\partial_\beta\varphi(X^{i,N}_{s'},M^N_{s'})\,ds'
 \\
+\int_0^t\big[\varphi\Big(0,M^N_{s'}+\frac{\alpha\,\eps}{N}\indiq_{\{s\leq t-\tau_i\}}\Big)-\varphi(X^{i,N}_{s'},M^N_{s'})\big] \int_0^\infty\indiq_{\{u\leq a(X_{s'}^{i,N},M^N_{s'})\}}\NN^i(du,ds')
\\
 +\sum_{j\neq i}\int_0^{t-\tau_j}\big[\varphi(X^{i,N}_{s'+\tau_j},M^N_{s'+\tau_j}+\frac{\alpha\,\eps}N)-\varphi(X^{i,N}_{s'+\tau_j},M^N_{s'+\tau_j})\big] \int_0^\infty\indiq{\{u\leq a(X_{s'}^{j,N},M^N_{s'})\}}\NN^j(du,ds'),
\end{multline}
implying, that $F(\mu_N)$ can be rewritten by
\begin{equation}\nonumber
 F(\mu_N)\,=\,\frac{1}{N}\sum_{i=1}^N\varphi_1(X^{i,N}_{s_1},M^N_{s_1})\ldots\varphi_k(X^{i,N}_{s_k},M^N_{s_k})\big[(R_t^{i,N}-R_s^{i,N})+(\Delta_{t}^{i,N}-\Delta_{s}^{i,N})\big],
\end{equation}
with
\begin{multline}\nonumber
 R^{i,N}_t:=\int_0^t\big[\varphi\Big(0,M^N_{s-}+\frac{\alpha\,\eps}{N}\indiq_{\{s\leq t-\tau_i\}}\Big)-\varphi(X^{i,N}_{s-},M^N_{s-})\big]
 \\
 \times\Big[\int_0^\infty\indiq_{\{u\leq a(X_{s-}^{i,N},M^N_{s-})\}}\NN^i(du,ds)-a(X_{s-}^{i,N},M^N_{s-})\,ds\Big],
\end{multline}
and
\begin{eqnarray*}
 \Delta_{t}^{i,N}&:=& \sum_{j\neq i}\int_0^{t-\tau_j}\int_0^\infty\big[\varphi\Big(X^{i,N}_{s_-+\tau_j},M^N_{s_-+\tau_j}+\frac{\alpha\,\eps}N\Big)-\varphi(X^{i,N}_{s_-+\tau_j},M^N_{s_-+\tau_j})\big]\indiq_{\{u\leq a(X_{s-}^{j,N},M^N_{s-})\}}\NN^j(du,ds)
 \\
 &&\quad-\int_0^t\partial_\beta\varphi(X^{i,N}_{s},M^N_{s})\frac{\alpha\,\eps}{N}\sum_{j=1}^N\int_0^s a(X^{j,N}_{s-w},M^N_{s-w})\,b(dw)\,ds.
\end{eqnarray*}
Using that the Poisson processes $\NN^i$ are i.i.d., we get that the compensated martingales $R^{i,N}_t$ are orthogonal, and thanks to the exchangeability, we get that
\begin{equation}\nonumber
 \Ee[|F(\mu_N)|]\,\leq\, \frac{C_F}{\sqrt{N}}\Ee[(R^{1,N}_t-R^{1,N}_s)^2]^{1/2}+C_F\,\Ee[|\Delta^{1,N}_t|+|\Delta^{1,N}_s|]
\end{equation}
for some positive $C_F$ depending on the upper bounds of the test functions composing $F$. Moreover, the first expectation is bounded uniformly on $N$:
\begin{eqnarray*}
 \Ee\big[(R^{1,N}_t-R^{1,N}_s)^2\big]
% \,=\,\int_s^t\Ee\Big[\big(\varphi(0,M^N_{s})-\varphi(X^{i,N}_{s},M^N_{s})\big)^2a(X_{s}^{i,N},M^N_{s})\,ds\Big]
\,\leq\, C_F\int_0^t\Ee\big[a(X_{s}^{1,N},M^N_{s})\big]\,ds,
\end{eqnarray*}
which is finite thanks to~\eqref{eq:bound-Exp}.

For the second expectation, we split $\Delta^{1,N}_t$ in four quantities that can be handled separately:
\begin{multline}\nonumber
 |\Delta^{1,N}_t|\leq
 \int_{0}^{t-\tau_1}\big|\varphi(X^{1,N}_{s_-+\tau_1},M^N_{s_-+\tau_1}+\frac{\alpha\,\eps}N)-\varphi(X^{1,N}_{s_-+\tau_1},M^N_{s_-+\tau_1})\big|\indiq_{\{u\leq a(X_{s-}^{1,N},M^N_{s-})\}}\NN^1(du,ds)
 \\
 +\Big|\sum_{j=1}^N\int_{0}^{t-\tau_j}\big[\varphi(X^{1,N}_{s_-+\tau_j},M^N_{s_-+\tau_j}+\frac{\alpha\,\eps}N)-\varphi(X^{1,N}_{s_-+\tau_j},M^N_{s_-+\tau_j})\big]
 \\
 \qquad\qquad\qquad\times\Big[\int_0^\infty\indiq_{\{u\leq a(X_{s-}^{j,N},M^N_{s-})\}}\NN^j(du,ds)-a(X_{s}^{j,N},M^N_{s})\,ds\Big]\Big|
 \\
 +\sum_{j=1}^N\int_{0}^{t-\tau_j}\Big|\big[\varphi(X^{1,N}_{s+\tau_j},M^N_{s+\tau_j}+\frac{\alpha\,\eps}N)-\varphi(X^{1,N}_{s+\tau_j},M^N_{s+\tau_j})-\frac{\alpha\eps}{N}\partial_\beta\varphi(X^{1,N}_{s+\tau_j},M^N_{s+\tau_j})\big]a(X_{s}^{j,N},M^N_{s})\Big|\,ds
 \\
% &&+\Big|\sum_{j=1}^N\frac{\alpha\,\eps}{N}\int_0^t\partial_\beta\varphi(X^{1,N}_{s_-},M^N_{s_-})\big]a(X_{s-\tau_j}^{j,N},M^N_{s-\tau_j})\,ds\Big|
% +\frac{\alpha\,\eps}{N}\Big|\sum_{j=1}^N\int_{\tau_j}^{t}\partial_\beta\varphi(X^{1,N}_{s},M^N_{s})a(X^{j,N}_{s-\tau_j},M^N_{s-\tau_j})\,ds-\int_0^{t}\int_0^s a(X^{j,N}_{s-w},M^N_{s-w})\,b(dw)\,ds\,\Big|
 +C_F\frac{\alpha\,\eps}{N}\Big|\sum_{j=1}^N\Big(\int_{\tau_j}^{t}a(X^{j,N}_{s-\tau_j},M^N_{s-\tau_j})\,ds-\int_0^{t}\int_0^s a(X^{j,N}_{s-w},M^N_{s-w})\,b(dw)\,ds\Big)\,\Big|
% &&+\frac{\alpha\,\eps}{N}\Big|-\sum_{j=1}^N\int_0^t\partial_\beta\varphi(X^{1,N}_{s},M^N_{s})\int_0^\tau a(X^{j,N}_{s-s'},M^N_{s-s'})\,b(ds')\Big]\,ds\,\Big|
 :=\TTT_1+\TTT_2+\TTT_3+\TTT_4.
\end{multline}
The first three terms are controllable simply using that $\varphi\in C^2_b(\R^2_+)$. Indeed, for any $(x,m)\in\R^2_+$, we have that
\begin{equation}\nonumber
 \varphi\Big(x,m+\frac{\alpha\eps}{N}\Big)\,=\,\varphi(x,m)+\frac{\alpha\eps}{N}\,\partial_\beta\varphi(x,m)+\frac12\Big(\frac{\alpha\eps}{N}\Big)^2\,\partial^2_{\beta\beta}\varphi(x,m)+O(N^{-2}),
\end{equation}
using again~\eqref{eq:bound-Exp} we get that the respective expectations are going to 0 when $N$ goes to infinity (using Holder's inequality to find the convergence). 

The contribution of $\TTT_4$ must be handled more carefully, nevertheless, we have that
\begin{equation}\nonumber
 \Ee_{\text{delays}}\Big[\int_{\tau_j}^ta(X^{j,N}_{s-\tau_j},M^N_{s-\tau_j})\,ds\Big]\,\,=\,\,\int_0^\infty\int_w^t a(X^{j,N}_{s-w},M^N_{s-w})\,ds\,b(dw)\,\,=\,\,\int_0^t\int_0^s a(X^{j,N}_{s-w},M^N_{s-w})\,b(dw)\,ds.
\end{equation}
for any $1\leq j\leq N$, therefore, each term defined by
\begin{equation}\nonumber
 \TTT_{j,4}(t)\,:=\int_{\tau_j}^{t}a(X^{j,N}_{s-\tau_j},M^N_{s-\tau_j})\,ds-\int_0^{t}\int_0^s a(X^{j,N}_{s-w},M^N_{s-w})\,b(dw)\,ds,
\end{equation}
has zero expectation. Using that the delays $\tau_j$ are i.i.d, we get that
\begin{equation}\nonumber
\Ee\big[\TTT_{j,4}(t)\times\TTT_{k,4}(t)\big]\,=\,\Ee\big[\Ee_{\text{delays}}[\TTT_{j,4}(s)]\,\times\,\Ee_{\text{delays}}[\TTT_{k,4}(s)]\big]=0,
\end{equation}
if $i\neq k$, then
\begin{equation}\nonumber
 \Ee[\TTT_4]\,\leq\,C_F\,\frac{\alpha\,\eps}{N}\Big(\sum_{j,k=1}^N\Ee\big[\TTT_{j,4}(t)\times\TTT_{k,4}(t)\big]\Big)^{1/2}\,ds\,=\,C_F\,\frac{\alpha\,\eps}{\sqrt{N}}\Big(\Ee\big[(\TTT_{1,4}(t))^2\big]\Big)^{1/2}.
\end{equation}
Finally, using Lemma~\ref{lem:apriori-PS2}, we see that 
$$
\Ee\big[\TTT_{1,4}(s)^2\big]\,\leq\,2\,\Ee\Big[\Big(\int_{\tau_1}^{t}a(X^{1,N}_{s-\tau_1},M^N_{s-\tau_1})\,ds\Big)^2+\Big(\int_0^{t}\int_0^s a(X^{1,N}_{s-w},M^N_{s-w})\,b(dw)\,ds\Big)^2\Big],
$$
 the first quantity is bounded because
 \begin{equation}\nonumber
  \int_{\tau_1}^ta(X^{1,N}_{s-\tau_1},M^N_{s-\tau_1})\,ds\,\leq\, t^{1/2}\times\Big(\int_{0}^ta(X^{1,N}_{s},M^N_{s})^2\,ds\Big)^{1/2},
 \end{equation}
 and the second because of
 \begin{equation}\nonumber
  \int_0^{t}\int_0^s a(X^{1,N}_{s-w},M^N_{s-w})\,b(dw)\,ds\,\leq\,\int_0^{t}a(X^{1,N}_{s},M^N_{s})\,ds,
 \end{equation}
 summarizing, $\TTT_4$ is also going to 0 with $N$.
\bigskip

\noindent\textit{Step 3.} Before passing to the limit we still need to be sure that no mass is added in the discontinuity points of the paths, i.e., we need to check that for any $t\geq0$, a.s., $$\mu(\{(\gamma,\beta)\,:\,\Delta(\gamma,\beta)(t)\neq0\})=0.$$ 

The proof is exactly as in~\cite[Theorem 5-(iii)-Part 2]{Fournier:2014nr} but for completeness we give some remarks. In order to get a contradiction, we assume that there are some $b,d>0$ such that
\begin{equation}\nonumber
 \Pp[E]>0,\quad\text{with}\quad E:=\big\{\mu(\{(\gamma,\beta):\max(|\Delta\gamma(t)|,|\Delta\beta(t)|)>b\})>d\big\}.
\end{equation}
Therefore, for any $\epsilon>0$, it holds
\begin{equation}\nonumber
 E\subset\{\mu(B_b^\epsilon)>d\},\quad B_b^\epsilon:=\{\mu:\sup_{s\in(t-\epsilon,t+\epsilon)}\max(|\Delta\gamma^1(s)|,|\Delta\gamma^2(s)|)>b\}.
\end{equation}
Moreover, $B_b^\epsilon$ is an open subset of $\mathbb D(\R_+^2)$, then $$\PP_{b,d}^\epsilon:=\{Q\in\Pp(\mathbb(\R_+^2):Q(B_b^\epsilon)>d\}\subset\Pp(\mathbb D(\R_+^2)$$ is also an open set. Thanks of Portmanteau theorem we get that for any $\epsilon>0$,
\begin{equation}\nonumber
 \liminf_{N\rightarrow\infty}\, \Pp(\mu_N\in\PP_{b,d}^\epsilon)\geq\Pp(\mu\in\PP_{b,d}^\epsilon)\geq \Pp(E)>0.
\end{equation}

On the other hand, for $N$ large enough, the jumps in equation~\eqref{eq:syspar-M} are smaller than $b$ and then the problem is reduced to control the size of the jumps in equation~\eqref{eq:syspar-X}, and in particular to show that
\begin{equation}\nonumber
 \Pp(\mu_N\in\PP_{b,d}^\eps)\leq \Pp\Big(\frac{1}{N}\sum_{i=1}^N\indiq_{\big\{\int_{t-\epsilon}^{t+\epsilon}\indiq_{\{u\leq a(X^{i,N}_{s-},M^N_{s-})\}}\NN^i(du,ds)\geq1\big\}}\geq b\Big)\rightarrow0,
\end{equation}
which can be easily done using the same arguments of the proof of Proposition~\ref{prop:meanfield-Tight}.
\smallskip

\noindent\textit{Step 4.} Now we see that $F$ is a continuous function at any point $Q\in\Pp(\mathbb D(\R_+^2)$ such that
$$
 Q(\{(\gamma,\beta)\,:\,\Delta(\gamma,\beta)(s_1)=\ldots=\Delta(\gamma,\beta)(s_k)=\Delta(\gamma,\beta)(s)=\Delta(\gamma,\beta)(t)=0\})\,=\,1,
$$
and
$$
 \int_{\mathbb D(\R_+^2)}\int_0^t\int_0^s a(\gamma_{s-w},\beta_{s-w})\,b(dw)\,ds\, Q(d\gamma,d\beta)\,<\,\infty.
$$
Thanks to Step 2 and 3 we know that our limit belongs to this subset of $\Pp(\mathbb D(\R_+^2)$, and therefore
$$
 \Ee\big[|F(\mu)|\big]\,\leq\,\lim_{K\rightarrow\infty}\limsup_{N\rightarrow\infty} \Ee\big[|F(\mu_N)|\wedge K\big]\,=\,0.
$$
\end{proof}

So far we have built a weak solution $(Y_t,M_s)_{t\geq0}$ to the mean field system~\eqref{eq:meanfield-Y}-\eqref{eq:meanfield-M}, such that $\int_0^t\int_0^s\Ee[a(Y_{s-w},M_{s-w})]\,b(dw)\,ds$ is finite for all times. Thanks to the path-wise uniqueness result of Section~\ref{sec:Uniqueness} we can go a little further by providing the

\begin{proof}[Proof of Theorem~\ref{th:meanfield-Wellpossed}]
Proposition~\ref{prop:meanfield-Sol} gave us already the existence of a weak solution such that~\eqref{eq:delay} holds true. Furthermore, if the initial laws $(g_0,m_0)$ are compactly supported, then thanks to inequalities~\eqref{eq:bound-Y} and~\eqref{eq:bound-Mf}, we get that a.s.
 $$
  Y_t\,\leq\, Y_0+t\,\leq\, S_{g_0}+t=: A(t),\qquad M_t\,\leq\,M_0+C_2' (t+\Ee[Y_0])\,\leq\, S_{m_0}+C_2'(t+\Ee[Y_0])=:B(t),
 $$
 with $S_{g_0}$ (respectively $S_{m_0})$ any upper bound of the support of $g_0$ (resp. $m_0$). Proposition~\ref{prop:UniqBounded} implies that there is a unique solution such that the previous conditions hold true, and this solutions is exactly a process with law $\mu$ (it suffices to pass to the limit in the associated particle system).
\end{proof}
 
 The existence of a strong solution in the case of exponential decay of initial conditions is similar and therefore omitted.

\section{Mean-Field convergence by the coupling method}
\label{sec:MFConv}
\setcounter{equation}{0}
\setcounter{theo}{0}

In this final section we use the ideas of coupling to prove a quantified version of the convergence of the empirical laws $\mu_N$ towards the law of the unique process that solves~\eqref{eq:meanfield-Y} and~\eqref{eq:meanfield-M}. We start by noticing that, thanks to Theorem~\ref{th:meanfield-Wellpossed}, there exists a family of stochastic processes 
\begin{equation}\label{eq:setofIC}
(Y^{1,N}_t,M^{i,N}_t,\ldots, Y^{N,N}_t,M^{N,N}_t)_{t\geq0},
\end{equation}
 such that
 \begin{equation}\nonumber
   Y_t^{i,N} = X_0^{i,N}+t- \int_0^tY_{s-}^{i,N}\int_{0}^\infty \indiq_{\{u\leq a(Y_{s-}^{i,N},M_{s-}^{i,N})\}}\,\NN^i(du,ds),
 \end{equation}
 and
 \begin{equation}\nonumber
  M_t^{i,N} = M_0^N-\alpha\Big[\int_0^t M_s^{i,N}\,ds -\eps\int_0^t\int_0^s\Ee[a(Y_{s-w}^{i,N},M_{s-w}^{i,N})]\,b(dw)ds\Big],
\end{equation}
where the initial conditions and the Poisson processes are exactly as described in the introduction. In the following, we use the notation $\eta_N(t)$ for the empirical mean associated to the exchangeable family $(Y^{i,N}(t),M^{i,N}(t))$. 

By the definition of $X^{i,N}_t$, it follows that
 \begin{equation}\nonumber
   X_t^{i,N}-Y_t^{i,N} = \int_0^t\int_{0}^\infty \Big[-X_{s-}^{i,N}\indiq_{\{u\leq a(X_{s-}^{i,N},M_{s-}^N)\}}+ Y_{s-}^{i,N}\indiq_{\{u\leq a(Y_{s-}^{i,N},M_{s-}^{i,N})\}}\Big]\,\NN^i(du,ds),
 \end{equation}
then
 \begin{multline}\label{eq:ModulusXi}
 \Ee\big[|X_t^{i,N}-Y_t^{i,N}|\big]\,\leq\, -\int_0^t\Ee\Big[\big|X_{s}^{i,N}- Y_{s}^{i,N}\big|\,a(X_{s}^{i,N},M_{s}^N)\wedge a(Y_{s}^{i,N},M_{s}^{i,N})\Big]\,ds
 \\
 +\int_0^t\Ee\Big[ \big(X_{s}^{i,N}+ Y_{s}^{i,N}\big) \big|a(X_{s}^{i,N},M_{s}^N)-a(Y_{s}^{i,N},M_{s}^{i,N})\big|\Big]\,ds.
 \end{multline}

Since initial distribution are compactly supported, we notice that $Y_t^{i,N}\leq A(t)$ and $M_t^{i,N}\leq B(t)$ for some locally bounded functions $A,B$ independent of $N$. Therefore, $a$ is bounded and Lipschitz continuous in both variables. Then, for any $i=1,\ldots,N$, we get that
 \begin{equation}\nonumber
  \Ee\big[|X^{i,N}_s-Y^{i,N}_s|\big]\,\leq\, C_T\int_0^t\Ee\big[\big(|X^{i,N}_{s'}-Y^{i,N}_{s'}|+|M^{N}_{s'}-M^{i,N}_{s'}|\big)\big]\,ds,
 \end{equation}
 for some positive constant $C_T$ independent of $N$. Similarly, by the definition of $M^N_t$, we have
 \begin{multline}\label{eq:ModulusMn}
  \frac1N\sum_{i=1}^N \Ee\big[|M^{N}_t-M_t^{i,N}|\big] \,\leq\,   \frac\alpha N\sum_{i=1}^N\int_0^t \Ee\big[|M_s^{N}-M_s^{i,N}|\big]\,ds
   \\
    +\frac{\alpha\eps}{N}\,\sum_{i=1}^N\int_{0}^{t}\Ee\big[|a(X^{i,N}_{s},M_{s}^N)-a(Y^{i,N}_{s},M_{s}^{i,N})|\big]\,ds
%   \\
%   +\frac{\alpha\eps}{N}\sum_{i=1}^N \int_{0}^{t-\tau_i}\Big[\int_{0}^\infty  \indiq_{\{u\leq a(X^{i,N}_{s_-},M_{s_-}^N)\}}\,\NN^i(du,ds)-a(X^{i,N}_{s_-},M_{s_-}^N)\,ds\Big],
\\
    +\frac{\alpha\eps}{N}\Ee\Big[\Big|\sum_{i=1}^N\Big(\int_{0}^{t-\tau_i}a(Y^{i,N}_{s},M_{s}^{i,N})\,ds-\int_0^t\int_0^s\Ee\big[a(Y^{i,N}_{s-w},M^{i,N}_{s-w})\big]\,b(dw)\,ds\Big)\Big|\Big],
 \end{multline}
 we finally see that using the arguments of \textit{Step 2} of the proof of Proposition~\ref{prop:meanfield-Sol}, we notice that there is another positive constant, that we also call $C_T$, such that
 \begin{equation}\nonumber
  \frac{1}{N}\sum_{i=1}^N\Ee\big[|M^{N}_s-M^{i,N}_s|\big]\leq \frac{C_T}{N^{1/2}}
  +\frac{C_T}{N}\sum_{i=1}^N\int_0^t\Ee\big[\big(|X^{i,N}_{s'}-Y^{i,N}_{s'}|+|M^{N}_{s'}-M^{i,N}_{s'}|\big)\big]
\,ds,
  \end{equation}
getting the

\begin{proof}[Proof of Theorem~\ref{th:chaoschaos} - (compactly supported case)] Gathering~\eqref{eq:ModulusXi} and~\eqref{eq:ModulusMn} and using the Gronwall's lemma, we get that
\begin{equation}\nonumber
 \frac{1}{N}\sum_{i=1}^N\Ee\big[\big(|X^{i,N}_s-Y^{i,N}_s|+|M^{N}_s-M^{i,N}_s|\big)\big]\,\,\leq\,\, \frac{C_T}{N^{1/2}}.
\end{equation}
To finish, we apply Fournier-Guillin~\cite[Theorem 1]{Fournier:2013kq} with $d=2$, $p=1$, $q=2+\epsilon$, to find that there exists a positive constant $C$ independent of $N$ such that
\begin{equation}\nonumber
 \Ee\big[\WW_1\big(\eta_N(t),\loi(Y^{1,N}_t,M^{1,N}_t)\big)\big]\,\leq\,C\,\Ee\big[(Y^{1,N}_t,M^{1,N}_t)^{2+\epsilon}\big]^{1/2+\epsilon}\frac{\log(1+N)}{N^{1/2}},
\end{equation}
but, since the initial laws $(g_0,m_0)$ are compactly supported, all polynomial moments of the solution are upper bounded by a constant independent of $N$. Using triangular inequality we get that
 \begin{eqnarray*}
  \Ee\big[\WW_1\big(\mu_N(t),\loi(Y^{1,N}_t,M^{1,N}_t)\big)\big]
  &\leq&\Ee\big[\WW_1\big(\mu_N(t),\eta_N(t))\big)\big]+\Ee\big[\WW_1\big(\eta_N(t),\loi(Y^{1,N}_t,M^{1,N}_t)\big)\big]
  \\
  &\leq& C_T\frac{\log(1+N)}{N^{1/2}},
 \end{eqnarray*}
 since $S$ has only one element we conclude that locally in time
 \begin{equation}\nonumber
  \mu_N\xrightarrow{N\rightarrow\infty} \loi((Y^{1,N},M^{1,N})),
 \end{equation}
 as $\log(1+N)/\sqrt{N}$.
\end{proof}

Let us now explain why we cannot conclude using the same technique in the fast decay case, i.e., to mimic the path-wise uniqueness proof. Recalling It\^o's formula~\eqref{eq:Ito-gen}, we get
\begin{equation}\nonumber
   \Ee\big[|X_t^{i,N}-Y_t^{i,N}| \big]   \leq\int_0^t \Ee\big[|a(X_{s}^{i,N},M_{s}^N)- a(Y_{s}^{i,N},M_{s}^{i,N})|\times |X_{s}^{i,N}+ Y_{s}^{i,N}|\big]\,ds.
 \end{equation}
 Moreover, thanks to the fast decay at infinite it suffices to define the event
$$
 \EE_{i,N}=\Big\{\sup_{0\leq t\leq T}|X_t^{i,N}|<R,\quad\sup_{0\leq t\leq T}|Y_t^{i,N}|<R\Big\},
$$
to get that
\begin{multline}\nonumber
   \Ee\big[|X_t^{i,N}-Y_t^{i,N}| \big]\leq 2\,C_0\,Ra(R,R)\int_0^t \Ee\big[|X_{s}^{i,N}-Y_{s}^{i,N}|\big]\,ds+2\,C_0R\int_0^t \Ee\big[|M_{s}^N-M_{s}^{i,N}|\big]\,ds
   \\
 +C_T\Pp(\EE_{i,N}^c)^{1/2}.
 \end{multline}
Finally, multiplying by $N^{-1}$ and adding on $i$, implies that
\begin{multline}\label{eq:auxfast}
   \frac{1}{N}\sum_{i=1}^N\Ee\big[|X_t^{i,N}-Y_t^{i,N}| \big]\leq 2\,C_0\,Ra(R,R)\int_0^t \frac{1}{N}\sum_{i=1}^N\Ee\big[|X_{s}^{i,N}-Y_{s}^{i,N}|\big]\,ds
   \\
+2\,C_0R\int_0^t  \frac{1}{N}\sum_{i=1}^N\Ee\big[|M_{s}^N-M_{s}^{i,N}|\big]\,ds +C_T \max_{1\leq i\leq N}\Pp(\EE_{i,N}^c)^{1/2}.
 \end{multline}

 On the other hand, the difference between $M^N_t$ and $M^{i,N}_t$, is controlled by noticing that
 \begin{multline}\nonumber
 M_t^{i,N}-M^N_t = -\alpha\int_0^t (M_s^{i,N}-M^N_s)\,ds 
 \\
 +\alpha\eps\int_0^t\int_0^s\Ee[a(Y_{s-w}^{i,N},M_{s-w}^{i,N})]\,b(dw)ds-\frac{\eps\alpha}{N}\sum_{j=1}^N \int_{0}^{t-\tau_j}a(Y^{j,N}_{s},M_{s}^{j,N})\,ds
 \\
 +\frac{\eps\alpha}{N}\sum_{j=1}^N \int_{0}^{t-\tau_j}\Big[a(Y^{j,N}_{s},M_{s}^{j,N})-a(X^{j,N}_{s},M_{s}^N)\Big]\,ds
 \\
-\frac{\eps\alpha}{N}\sum_{j=1}^N \int_{0}^{t-\tau_j}\int_{0}^\infty  \indiq_{\{u\leq a(X^{j,N}_{s},M_{s}^N)\}}\,\big[\NN^j(du,ds)-\,du\,ds\big].
 \end{multline}
The first term on the righthand side has a nice structure, and the third one is controlled by
\begin{multline}\nonumber
  \int_0^t\Ee\big[\big|a(Y_{s}^{j,N},M_{s}^{j,N})-a(X_{s}^{j,N},M_{s}^{N})\big|\big]\,ds
   \\
   \leq C_T\Pp(\EE_{j,N}^c)^{1/2}+2C_0\,Ra(R,R)\int_0^t\Ee\big[\big|X_{s}^{j,N}-Y_{s}^{j,N}\big|+\big|M_{s}^{j,N}-M_{s}^{N}\big|\big]\,ds,
 \end{multline}
which is equivalent to~\eqref{eq:auxfast}

 The other two quantities are a little bit more delicate to handle, but using exchangeability and recalling the definition of $\TTT_{1,4}(s)$ (see Proposition~\ref{prop:meanfield-Sol}), gives
 $$
  \Ee\left[\frac{\eps\alpha}N\sum_{j=1}^N\left(\int_0^t\int_0^s\Ee[a(Y_{s-w}^{j,N},M_{s-w}^{j,N})]\,b(dw)ds
- \int_{0}^{t-\tau_j}a(Y^{j,N}_{s},M_{s}^{j,N})\,ds\right)\right]=\frac{\eps\alpha}{N^{1/2}}\Ee\big[\TTT_{1,4}(s)^2\big]^{1/2},
 $$
and for the last contribution, we simply recall that the Poisson processes are independent, to get
 \begin{equation}\nonumber
  \frac{\eps\alpha}{N}\,\Ee\left[\sum_{j=1}^N\int_{0}^{t-\tau_j}\int_{0}^\infty  \indiq_{\{u\leq a(X^{j,N}_{s},M_{s}^N)\}}\,\big[\NN^j(du,ds)-\,du\,ds\big]\right]\,\leq\, \frac{\eps\alpha}{N^{1/2}}\,\int_{0}^{t}\Ee\Big[a(X^{1,N}_{s},M_{s}^N)^2\Big]^{1/2}\,ds.
 \end{equation}
 
 We gather all the previous inequalities to find
 \begin{multline}\nonumber
  \frac{1}{N}\sum_{i=1}^N\Ee\big[|X^{i,N}_s-Y^{i,N}_s|+|M^{i,N}_s-M^{N}_s|\big]
  \\
  \leq C_T \max_{1\leq i\leq N}\Pp(\EE_{i,N}^c)^{1/2}+\frac{C_T}{N^{1/2}}+C\,Ra(R,R)\int_0^t\frac{1}{N}\sum_{i=1}^N\Ee\big[|X^{i,N}_{s'}-Y^{i,N}_{s'}|+|M^{i,N}_{s'}-M^{N}_{s'}|\big]\,ds,
 \end{multline}
 but, even if we use Gronwall's lemma and condition~\eqref{eq:polyn}, we still can pass to the limit $R\rightarrow\infty$ because of the presence of the extra term $\frac{C_T}{N^{1/2}}$ on the righthand side.
%We would like to improve the convergence rate by getting rid of the $\log(1+N)$. In the next subsection we use the $\WW_2$ distance to overcome this issue.

\section{Further asymptotic analysis}

So far, we have seen that if initial condition are strongly bounded (in the sense of a.s. lie inside a compact) then we get a nice rate of convergence, which is somehow sharp, on the $L^1$ norm.

We explore now, under what circumstances these results still hold true in a larger space. To that aim, we use the a priori bounds and the integrability of the intensity rate. Let us start by a result that tell us that the intensity function remains with high probability bounded.

\begin{prop}\label{prop:ControlRate}
Under the assumptions~\eqref{eq:syspar-H1} and~\eqref{eq:syspar-H3}, for any positive horizon of time $T$ fixed, there is some positive constant $C_{mf}$ such that
\begin{equation}\label{eq:Pgoing0}
 \Pp\Big(\sup_{0\leq t\leq T}\max\big(\langle\mu_N(t),a^2\rangle,\langle\eta_N(t),a^2\rangle\big)\geq\, C_{mf}\Big)\xrightarrow{N\rightarrow+\infty}{}0,
\end{equation}
as $1/N$.
\end{prop}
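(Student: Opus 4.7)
My plan is to transfer the almost-sure a priori bounds~\eqref{eq:bound-X}--\eqref{eq:bound-M} into a deterministic pointwise control of $a$ on the support of the empirical measure, and then combine this with a quantitative law of large numbers (Chebyshev at the level of second moments) to obtain the $1/N$ rate.

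First, I would isolate a \emph{good event} on which the global quantities driving~\eqref{eq:bound-M} are under control. Concretely, fixing $T>0$ and recalling that $Z_T^N$ is the empirical mean of $N$ i.i.d.\ $\mathrm{Poisson}(T)$ random variables, I would introduce
$$\EE_N\,:=\,\big\{M_0^N\leq R_1,\ \bar X_0^N\leq R_2,\ Z_T^N\leq R_3\big\}$$
for constants $R_1,R_2,R_3$ depending on $T$ and on second moments of $g_0,m_0$. Since $M_0^N\sim m_0$ and both $\bar X_0^N$, $Z_T^N$ are empirical means of $N$ i.i.d.\ variables with finite variance, Chebyshev's inequality gives $\Pp(\EE_N^c)\leq C/N$. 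This step already fixes the target rate.

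Next, on $\EE_N$ the bounds~\eqref{eq:bound-X} and~\eqref{eq:bound-M} yield the deterministic envelopes $X^{i,N}_t\leq X_0^{i,N}+T$ and $M^N_t\leq R^*:=R_1+C_1'(T+R_2+R_3)$ valid for every $t\in[0,T]$ and $i\leq N$. The monotonicity of $a$ in each variable (hypothesis~\eqref{eq:syspar-H1}) then promotes these envelopes to the pointwise upper bound $a(X^{i,N}_t,M^N_t)^2\leq a(X_0^{i,N}+T,R^*)^2$, and averaging gives
$$\sup_{0\leq t\leq T}\langle\mu_N(t),a^2\rangle\,\leq\,S_N\,:=\,\frac{1}{N}\sum_{i=1}^N a(X_0^{i,N}+T,R^*)^2\qquad\text{on }\EE_N.$$
Now $S_N$ is a genuine empirical mean of i.i.d.\ variables with finite second moment (inherited from $\int a(x,m)^2\,g_0(dx)m_0(dm)<\infty$ together with the monotonicity of $a$), so one last application of Chebyshev produces $\Pp(S_N\geq 2\,\Ee[a(X_0+T,R^*)^2])\leq C/N$. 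Setting $C_{mf}:=2\,\Ee[a(X_0+T,R^*)^2]$ and assembling via a union bound yields the $O(1/N)$ estimate for $\mu_N$. The argument for $\eta_N$ is entirely analogous, and in fact cleaner: the pairs $(Y^{i,N},M^{i,N})$ are i.i.d., so it is enough to use the mean-field analogues~\eqref{eq:bound-Y},~\eqref{eq:bound-Mf} in place of~\eqref{eq:bound-X},~\eqref{eq:bound-M} and carry out the same three-step scheme.

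The main obstacle I expect is the first step: ensuring that the tail event $\EE_N^c$ really has probability $O(1/N)$. This is where quantitative (second-moment) information on $g_0$ and $m_0$ is used; a mere qualitative law of large numbers would only produce $o(1)$. Once that is granted, the combination of monotonicity of $a$ with the path-wise a priori bounds reduces the original \emph{uniform-in-time} statement to a purely \emph{static} concentration inequality for an empirical mean, and that is precisely where the $1/N$ rate comes from. An alternative route via the It\^o expansion of $a^2$ and Doob's $L^2$ inequality on the resulting compensated Poisson martingale would also give $O(1/N)$ (thanks to the moment bound $\Ee[\langle\mu_N(s),a^5\rangle]<\infty$ from Lemma~\ref{lem:apriori-PS2}), but it would additionally require the Lipschitz-type assumption~\eqref{eq:syspar-H4}, which is not available under the current hypotheses, so the direct envelope-plus-Chebyshev scheme above seems preferable.
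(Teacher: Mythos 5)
Your route is genuinely different from the paper's: the paper expands $a^2$ by It\^o's formula, bounds the predictable bracket of the compensated Poisson martingale $\MM^N_{a^2}$ by $C_T/N$, and closes with a Gronwall argument on $\sup_{0\leq s\leq t}\langle\mu_N(s),a^2\rangle$ conditioned on the event $\{\sup_{t}\MM^N_{a^2}(t)\leq 1\}$ --- and, exactly as you anticipated, that computation invokes the constant $C_0$ of~\eqref{eq:syspar-H4}, which is not among the stated hypotheses. Your envelope-plus-Chebyshev scheme would avoid that assumption entirely, and the reduction of the uniform-in-time bound to a static concentration inequality, via the pathwise estimates~\eqref{eq:bound-X} and~\eqref{eq:bound-M} together with the monotonicity of $a$, is correct as far as it goes.

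The gap is in your first step. The variable $M_0^N$ is a \emph{single} draw from $m_0$, not an empirical mean of $N$ i.i.d.\ variables, so for a fixed threshold $R_1$ the probability $\Pp(M_0^N>R_1)$ is a constant depending only on $m_0$ and $R_1$: it does not decrease with $N$, let alone like $1/N$. Chebyshev only gives $\Pp(M_0^N>R_1)\leq \Ee[(M_0^N)^2]/R_1^2$, which is made small by taking $R_1$ large --- but $R_1$ enters $R^*$ and hence your $C_{mf}$, so you cannot send $R_1\to\infty$ while keeping a fixed constant in~\eqref{eq:Pgoing0}. Consequently $\Pp(\EE_N^c)$ is bounded below by a positive constant unless $m_0$ is compactly supported, and in that case the whole point of the section (going beyond compact support) is lost. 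The other two components of your good event are fine: $\bar X_0^N$ and $Z_T^N$ are genuine empirical means with finite variance and do concentrate at rate $1/N$. A second, smaller issue: to apply Chebyshev to $S_N=\frac1N\sum_{i} a(X_0^{i,N}+T,R^*)^2$ you need a finite \emph{variance} of $a(X_0+T,R^*)^2$, i.e.\ $a(\cdot+T,R^*)\in L^4(g_0)$, which is strictly stronger than the second-moment condition $\int a(x,m)^2\,g_0(dx)m_0(dm)<\infty$ you cite; under the fast-decay and polynomial-growth hypotheses this is available, but it must be stated. (To be fair, the paper's own last step --- the law-of-large-numbers claim for $\langle\mu_N(0),a^2\rangle=\frac1N\sum_i a(X_0^{i,N},M_0^N)^2$ --- is afflicted by the same coupling through the common factor $M_0^N$, so the difficulty you have run into is intrinsic to the statement rather than an artifact of your method.)
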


\begin{proof}
 Let us recall that from It\^o's formula~\eqref{eq:Ito-gen} we get, for a generic function $f(\cdot,\cdot)$, that the compensated local martingale writes
 \begin{multline}\nonumber
 \MM^{i,N}_{f}(t)\,=\, \int_0^t\Big[f\big(0,M^N_{s_-+\tau_i}+\frac{\alpha\eps}{N}\indiq_{\{s\leq t-\tau_i\}}\big)-f(X^{i,N}_{s-},M^N_{s-})\Big)\Big]
 \\
 \times\Big[\int_0^\infty\indiq_{\{u\leq a(X^{i,N}_{s-},M^N_{s-})\}}\NN^{i}(du,ds)-a(X^{i,N}_s,M^N_s)\,ds\Big]
 \\
 +\sum_{j\neq i}\int_0^{t}\Big[f\big(X^{i,N}_{s_-+\tau_j},M^N_{s_-+\tau_j}+\frac{\alpha\eps}{N}\big)-f(X^{i,N}_{s_-+\tau_j},M^N_{s_-+\tau_j})\Big]\indiq_{\{s\leq t-\tau_j\}}
 \\\times\Big[\int_0^\infty\indiq_{\{u\leq a(X^{j,N}_{s-},M^N_{s-})\}}\NN^j(du,ds)-a(X^{j,N}_{s},M^N_{s})\,ds\Big].
\end{multline}
Using that the local martingale $\MM^{i,N}_{a^2}(t)$ is locally integrable and that $a(0,\cdot)=0$, the associated brackets processes, in the case $f=a^2$, are given by
\begin{multline}\nonumber
 \langle\MM^{i,N}_{a^2}\rangle(t)\,=\,
 \int_0^t a(X^{i,N}_s,M^N_s)^5\,ds
 \\
 +\sum_{j\neq i}\int_0^{t}\Big[a\big(X^{i,N}_{s_-+\tau_j},M^N_{s_-+\tau_j}+\frac{\alpha\eps}{N}\big)^2-a(X^{i,N}_{s_-+\tau_j},M^N_{s_-+\tau_j})^2\Big]^2\indiq_{\{s\leq t-\tau_j\}}a(X^{j,N}_{s},M^N_{s})\,ds,
 \end{multline}
and, for $i\neq j$,
 \begin{multline}\nonumber
 \langle\MM^{i,N}_{a^2},\MM^{j,N}_{a^2}\rangle(t)\,=\,
 \\
 - \int_0^ta(X^{i,N}_s,M^N_s)^2\Big[a\big(X^{j,N}_{s_-+\tau_i},M^N_{s_-+\tau_i}+\frac{\alpha\eps}{N}\big)^2-a(X^{j,N}_{s_-+\tau_i},M^N_{s_-+\tau_i})^2\Big]\indiq_{\{s\leq t-\tau_i\}}a(X^{i,N}_s,M^N_s)\,ds
  \\
 - \int_0^ta(X^{j,N}_s,M^N_s)^2\Big[a\big(X^{i,N}_{s_-+\tau_j},M^N_{s_-+\tau_j}+\frac{\alpha\eps}{N}\big)^2-a(X^{i,N}_{s_-+\tau_j},M^N_{s_-+\tau_j})^2\Big]\indiq_{\{s\leq t-\tau_j\}}a(X^{j,N}_s,M^N_s)\,ds
 \\
 +\sum_{k\neq i, j}\int_0^{t}\Big[a\big(X^{i,N}_{s_-+\tau_k},M^N_{s_-+\tau_k}+\frac{\alpha\eps}{N}\big)^2-a(X^{i,N}_{s_-+\tau_k},M^N_{s_-+\tau_k})^2\Big]
 \\
 \times\Big[a\big(X^{j,N}_{s_-+\tau_k},M^N_{s_-+\tau_k}+\frac{\alpha\eps}{N}\big)^2-a(X^{j,N}_{s_-+\tau_k},M^N_{s_-+\tau_k})^2\Big]\indiq_{\{s\leq t-\tau_k\}}a(X^{k,N}_{s-},M^N_{s-})\,ds.
\end{multline}

Now, define $\MM^N_{a^2}(t)=N^{-1}\sum_{i=1}^N\MM^{i,N}_{a^2}(t)$,  it follows that
\begin{equation}\nonumber
 \Ee\big[\big(\sup_{0\leq s\leq t}\MM^N_{a^2}(t)\big)^2\big]=\Ee\big[\langle\MM_{a^2}^N\rangle(t)\big]\,=\,\frac{1}{N^2}\Ee\Bigg[\sum_{i=1}^N \langle\MM_{a^2}^{i,N}\rangle(t)+2\sum_{ i<j} \langle\MM^{i,N}_{a^2},\MM^{j,N}_{a^2}\rangle(t)\Bigg]\leq \frac{C_T}{N},
\end{equation}
where the last inequality is obtained using the same arguments of Proposition~\ref{lem:apriori-PS2}. In particular, we get that
\begin{equation}\nonumber
 \Pp\big[\sup_{0\leq t\leq T} \MM^N_{a^2}(t)\geq 1\big]\leq \Ee\big[\big(\sup_{0\leq t\leq T}\MM^N_{a^2}(t))^2\big]\leq  \frac{C_T}{N}.
\end{equation}

We come back to~\eqref{eq:Ito-gen} in the case $f=a^2$ to get
\begin{multline}\nonumber
 \langle \mu_N(t),a^2\rangle\,\leq\, \langle \mu_N(0),a^2\rangle
 +2C_0\,\int_0^t\langle\mu_N(s),a^2\rangle\,ds+\MM^N_{a^2}(t)
\\
 +2\,C_0\,\alpha\eps\int_0^{t}\sup_{0\leq s'\leq s}\langle\mu_N(s'),a\rangle\langle\mu_N(s),a\rangle\,ds,
\end{multline}
therefore, conditioning on the event $\sup_{0\leq t\leq T} \MM^N_{a^2}(t)\leq 1$ we get
\begin{equation}\nonumber
\sup_{0\leq s\leq t}\langle \mu_N(s),a^2\rangle\,\leq\,1+C_0\alpha\eps+\langle \mu_N(0),a^2\rangle
 +2\,C_0(1+2\,\alpha\eps)\,\int_0^t\sup_{0\leq s'\leq s}\langle\mu_N(s'),a^2\rangle\,ds.
\end{equation}
Finally, we recall that
\begin{equation}\nonumber
 \langle \mu_N(0),a^2\rangle\,=\,\frac1N\sum_{i=1}^N a(X_0^{i,N},M^N_0)^2,
\end{equation}
and by the law of large numbers we realise that there is a constant $C_T$ such that
\begin{equation}\nonumber
\Pp\big[\langle\mu_N(0),a^2\rangle>1+\Ee[a^2(X^{1,1}_0,M^1_0)\rangle]\big]<\frac{C_T}{N},
\end{equation}
and we deduce that indeed $\langle\mu_N,a^2\rangle$ remains with high probability bounded.

%For $\eta_N(t)$ the calculations are much easier, from the fast decay at infinite we see that
%\begin{equation}\nonumber
% \langle\eta_N(t),a^2\rangle=\frac1N\sum_{i=1}^N a(Y^{i,N}_t,M^{i,N}_t)^2\leq\frac {3C_\xi}N\sum_{i=1}^N \big[1+(Y^{i,N}_t)^{2\xi}+(M^{i,N}_t)^{2\xi}\big],
%\end{equation}
%then
%\begin{equation}\nonumber
% \Pp\Big[\langle\eta_N(t),a^2\rangle\geq N\Big]\leq\frac {3C_\xi}{N} \Ee\big[1+(Y^{i,N}_t)^{2\xi}+(M^{i,N}_t)^{2\xi}\big],
%\end{equation}
\bigskip

Finally, let us recall that
\begin{multline}\nonumber
 a(Y^{i,N}_t,M^{i,N}_t)^2\,\leq\,a(X^{i,N}_0,M^{N}_0)^2+C_0\int_0^ta(Y^{i,N}_s,M^{i,N}_s)^2\,ds
  \\
 +C_0\int_0^{t}a(Y^{i,N}_s,M^{i,N}_s)\int_0^s\Ee\big[a(Y_{s-w}^{i,N},M_{s-w}^{i,N})\big]\,b(dw)\,ds\,+\bar\MM^{i,N}_{a^2}(t),
\end{multline}
and $\int_0^t\Ee\big[a(Y_{s-w}^{i,N},M_{s-w}^{i,N})\big]\,b(dw)\,ds<C_T$, with an upper bound independent of $N$. Moreover, 
\begin{equation}\nonumber
\langle\bar\MM_{a^2}^{i,N}\rangle(t)\,=\, \int_0^t a(Y^{i,N}_s,M^{i,N}_s)^5\,ds,
\end{equation}
which in expectation is bounded thanks to the fast decay at infinite of the initial conditions. Then, from the independency of the Poisson processes, we also get
\begin{equation}\nonumber
 \Pp\big[\sup_{0\leq t\leq T} \bar\MM^N_{a^2}(t)\geq 1\big]\leq \Ee\big[\big(\sup_{0\leq t\leq T}\bar\MM^N_{a^2}(t)\big)^2\big]=\frac{1}{N^2}\Ee\Bigg[\sum_{i=1}^N \langle\bar\MM_{a^2}^{i,N}\rangle(t)\Bigg]\leq \frac{C_T}{N}.
\end{equation}
\end{proof}

Finally, we have all conditions to quantify the convergence of the empirical measures in the space of functions with fast decay at infinity.

\begin{proof}[Proof of Theorem~\ref{th:chaoschaos} - (fast decay at infinite case)] 
%First we recall that Fournier-Guillin~\cite[Theorem 1]{Fournier:2013kq} with $d=2$, $p=2$, $q=3+\epsilon$, implies the existence of a positive constant $C$ independent of $N$ such that
%\begin{equation}\nonumber
% \Ee\big[\WW_2\big(\eta_N(t),\loi(Y^{1,N}_t,M^{1,N}_t)\big)\big]\,\leq\,C\,\Ee\big[(Y^{1,N}_t,M^{1,N}_t)^{3+\epsilon}\big]^{2/3+\epsilon}N^{-1/2},
%\end{equation}
%therefore, in the best scenario and working with the $L^2$ norm the convergence will be at rate $N^{1/2}$.
%\bigskip
%
%Next, we notice that
%\begin{equation}\nonumber
%  \int_0^t \Ee\Big[a(X_{s}^{i,N},M_{s}^N)\times (X_{s}^{i,N}+ Y_{s}^{i,N})^2\times\indiq_{\{X_s^{i,N}\geq R\}} \indiq_{\{Y_s^{i,N}\geq R\}}\indiq_{\{Y_s^{i,N}\geq R\}}\Big]\,ds\leq .
%\end{equation}

Starting once again with the It\^o's formula~\eqref{eq:Ito-gen}, we have that
\begin{equation}\nonumber
   \Ee\big[|X_t^{i,N}-Y_t^{i,N}| \big]\leq \int_0^t \Ee\Big[|a(X_{s}^{i,N},M_{s}^N)- a(Y_{s}^{i,N},M_{s}^{i,N})|\times |X_{s}^{i,N}+ Y_{s}^{i,N}|\Big]\,ds.
 \end{equation}
  Moreover, defining the event
$$
 \EE_N=\Big\{\sup_{0\leq t\leq T}\langle\mu_N(t),a^2\rangle<C_{mf},\quad\sup_{0\leq t\leq T}\langle\eta_N(t),a^2\rangle<C_{mf}\Big\},
$$
we get that
\begin{multline}\nonumber
   \Ee\big[|X_t^{i,N}-Y_t^{i,N}| \big] \leq C_T\Pp(\EE_N^c)^{1/2}
   \\
   +\int_0^t\Ee\Big[|a(X_{s}^{i,N},M_{s}^N)-a(Y_{s}^{i,N},M_{s}^{i,N})|\times\big(X_{s}^{i,N}+Y_{s}^{i,N}\big)\indiq_{\EE_{N}}\Big]\,ds
 \end{multline}
From hypothesis~\eqref{eq:polyn} and Jensen's inequality, we have
\begin{equation}\nonumber
\frac1N\sum_{i=1}^N |X^{i,N}_t|^{2(1+\rho)}\leq \frac{c_\rho^{1-\rho}}{N}\sum_{i=1}^Na(X^{i,N}_t,M^N_t)^{2(1-\rho)}
\leq  \frac{c_\rho^{1-\rho}}{N^\rho}\,\big[\langle\mu_N(t),a^2\rangle\big]^{1-\rho},
 \end{equation}
therefore, multiplying by $N^{-1}$ and adding on $i$, we conclude
\begin{multline}\nonumber
   \frac{1}{N}\sum_{i=1}^N\Ee\Big[|X_t^{i,N}-Y_t^{i,N}| \Big]
   \leq C_T\,\Pp(\EE_N^c)^{1/2}
   \\
+\frac{4}{N}\sum_{i=1}^N\int_0^t\Ee\Big[\Big(a(X_{s}^{i,N},M_{s}^N)^{2(1-\rho)}+a(Y_{s}^{i,N},M_{s}^{i,N})^{2(1-\rho)}+|X_{s}^{i,N}|^{2(1+\rho)}+|Y_{s}^{i,N}|^{2(1+\rho)}\Big)\indiq_{\EE_N}\Big]\,ds,
 \end{multline}
 therefore there is a positive constant $C_T$, not depending on $N$, such that
 \begin{equation}\nonumber
   \frac{1}{N}\sum_{i=1}^N\Ee\Big[|X_t^{i,N}-Y_t^{i,N}| \Big] \leq C_T\,\Pp(\EE_N^c)^{1/2}+\frac{C_T}{N^\rho}.
 \end{equation}

 On the other hand, the difference between $M^N_t$ and $M^{i,N}_t$, is controlled by noticing that
 \begin{multline}\nonumber
 M_t^{i,N}-M^N_t = -\alpha\int_0^t (M_s^{i,N}-M^N_s)\,ds 
 \\
 +\alpha\eps\int_0^t\int_0^s\Ee[a(Y_{s-w}^{i,N},M_{s-w}^{i,N})]\,b(dw)ds-\frac{\eps\alpha}{N}\sum_{j=1}^N \int_{0}^{t-\tau_j}a(Y^{j,N}_{s},M_{s}^{j,N})\,ds
 \\
 +\frac{\eps\alpha}{N}\sum_{j=1}^N \int_{0}^{t-\tau_j}\Big[a(Y^{j,N}_{s},M_{s}^{j,N})-a(X^{j,N}_{s},M_{s}^N)\Big]\,ds
 \\
-\frac{\eps\alpha}{N}\sum_{j=1}^N \int_{0}^{t-\tau_j}\int_{0}^\infty  \indiq_{\{u\leq a(X^{j,N}_{s},M_{s}^N)\}}\,\big[\NN^j(du,ds)-\,du\,ds\big].
 \end{multline}
The first term on the righthand side has a nice structure, and the third one is controlled by
\begin{multline}\nonumber
  \int_0^t\Ee\big[\big|a(Y_{s}^{j,N},M_{s}^{j,N})-a(X_{s}^{j,N},M_{s}^{N})\big|\big]\,ds
   \\
   \leq C_T\Pp(\EE_N^c)^{1/2}+C_0\int_0^t\Ee\big[\big|M_{s}^{j,N}-M_{s}^{N}\big|\big]\,ds
   \\
   +C_0\int_0^t\Ee\Big[\big(a(X_{s}^{j,N},M_{s}^N)+a(Y_{s}^{j,N},M_{s}^{j,N})\big)\times\big(X_{s}^{j,N}+Y_{s}^{j,N}\big)\indiq_{\EE_{N}}\Big]\,ds.
 \end{multline}

 The other two quantities are a little bit more delicate to handle, but using exchangeability and recalling the definition of $\TTT_{1,4}(s)$ (see Proposition~\ref{prop:meanfield-Sol}), gives
 $$
  \Ee\left[\Big|\frac{\eps\alpha}N\sum_{j=1}^N\left(\int_0^t\int_0^s\Ee[a(Y_{s-w}^{j,N},M_{s-w}^{j,N})]\,b(dw)ds
- \int_{0}^{t-\tau_j}a(Y^{j,N}_{s},M_{s}^{j,N})\,ds\right)\Big|\right]=\frac{\eps\alpha}{N^{1/2}}\Ee\big[\TTT_{1,4}(s)^2\big]^{1/2},
 $$
and for the last contribution, we simply recall that the Poisson processes are independent, to get
 \begin{equation}\nonumber
  \frac{\eps\alpha}{N}\,\Ee\left[\Big|\sum_{j=1}^N\int_{0}^{t-\tau_j}\int_{0}^\infty  \indiq_{\{u\leq a(X^{j,N}_{s},M_{s}^N)\}}\,\big(\NN^j(du,ds)-\,du\,ds\big)\Big|\right]\,\leq\, \frac{\eps\alpha}{N^{1/2}}\,\int_{0}^{t}\Ee\Big[a(X^{1,N}_{s},M_{s}^N)^2\Big]^{1/2}\,ds.
 \end{equation}
 
 Gathering all the previous inequalities leads to
 \begin{equation}\nonumber
  \Ee\big[|X^{i,N}_s-Y^{i,N}_s|+|M^{i,N}_s-M^{N}_s|\big]
  \leq \frac{C_T}{N^{1/2}}+\frac{C}{N^\rho}+C_T\Pp(\EE_N^c)^{1/2}+C_T\int_0^t\Ee\big[|M^{i,N}_{s'}-M^{N}_{s'}|\big)\big]\,ds,
 \end{equation}
 from where we finally get that
 \begin{equation}\nonumber
  \Ee\big[\big(|X^{i,N}_s-Y^{i,N}_s|^2+|M^{i,N}_s-M^{N}_s|^2\big)\big]
  \\
  \leq C_Te^{TC_T}\max\Big(\frac1{N^{1/2}},\frac1{N^\rho}\Big),
 \end{equation}
 we notice that the best convergence rate is in the case of $\rho=1/2$. 
% In that case, we recall that from Fournier-Guillin~\cite[Theorem 1]{Fournier:2013kq} with $d=2$, $p=2$, $q=3+\epsilon$, there is a positive constant $C$ independent of $N$ such that
%\begin{equation}\nonumber
% \Ee\big[\WW_2\big(\eta_N(t),\loi(Y^{1,N}_t,M^{1,N}_t)\big)\big]\,\leq\,C\,\Ee\big[(Y^{1,N}_t,M^{1,N}_t)^{3+\epsilon}\big]^{2/3+\epsilon}N^{-1/2},
%\end{equation}
%therefore, we attain the best convergence rate expected.
\end{proof}

\bigskip
\bigskip
\appendix

\section{General Theorems for Stochastic Processes}
\label{sec:AppJacob}
\setcounter{equation}{0}
\setcounter{theo}{0}

\subsection*{Remarks on the proof of Lemma~\ref{lem:conditions-T}} Let us recall that the last thing we got in the main text was that for any $\varphi\in C_b^2(\R_+^2)$, the process
 \begin{multline}\label{eq:MartingalePb}
  \varphi(Y_t,M_t)-\varphi(Y_0,M_0)-\int_0^t\partial_m\varphi(Y_s,M_s)\Big[-\alpha M_s+\alpha\,\eps\int_0^\tau\Ee[a(Y_{s-s'},M_{s-s'})]\,b(ds')\Big]ds
  \\
  -\int_0^t\partial_y\varphi(Y_s,M_s)\,ds-\int_0^t a(Y_s,M_s)\big(\varphi(0,M_s)-\varphi(Y_s,M_s)\big)ds,
 \end{multline}
 is a local martingale.

Let us recall now the Jacob-Shiryaev~\cite[Theorem II.2.42 page 86]{MR1943877}
\begin{theo}
There is equivalence between:
\begin{itemize}
 \item $(Y_t,M_t)$ is a semimartingale, and it admits the characteristics $(B,0,\nu)$; i.e., $(Y_t,M_t)$ writes
 $$
  (Y_t,M_t)\,=\, (Y_0,M_0)+\MM^c+B,
 $$
 where $\MM^c$ is the continuous local martingale of the canonical decomposition, $B$ is predictable and $\nu$ is a predictable random measure on $\R_+\times\R_+^2$, namely the compensator of the random measure associated to the jumps of $X$.
 \item For each bounded function $\varphi\in C^2(\R_+^2)$, the process
 \begin{multline}\nonumber
  \varphi(Y_t,M_t)-\varphi(Y_0,M_0) - \int_0^t\partial_y\varphi(Y_{s-},M_{s-})\, dB^y_s-\int_0^t\partial_m\varphi(Y_{s-},M_{s-})\, dB^m_s
  \\
  -\int_0^t\left\{\varphi(Y_{t-}+y,M_{t-}+x)-\varphi(Y_{t-},M_{t-})-y\,\partial_y\varphi(Y_{t-},M_{t-})-m\,\partial_m\varphi(Y_{t-},M_{t-})\right\}\nu(ds,dy,dm)
 \end{multline}
 is a local martingale.
\end{itemize}
\end{theo}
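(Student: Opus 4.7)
The plan is to establish the equivalence by It\^o's formula in one direction and by a test-function/localisation argument in the other, following the classical approach of Jacod-Shiryaev.

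For the direction $(1)\Rightarrow(2)$: I apply It\^o's formula for general semimartingales to $\varphi(Y_t,M_t)$. Since the second characteristic is $0$, the continuous martingale part $\MM^c$ has vanishing bracket, so all second-order correction terms drop out. The first-order terms along $\MM^c$ produce a continuous local martingale, the first-order terms along $B$ give exactly the two drift integrals in (2), and the jump corrections
\[
\sum_{s\leq t}\bigl[\varphi(Y_s,M_s)-\varphi(Y_{s-},M_{s-})-\Delta Y_s\,\partial_y\varphi(Y_{s-},M_{s-})-\Delta M_s\,\partial_m\varphi(Y_{s-},M_{s-})\bigr]
\]
rewrite as an integral against the jump measure of $(Y,M)$. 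Compensating by $\nu$ produces the last integral of (2) plus a purely discontinuous local martingale, and summing the martingale contributions yields the statement.

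For the direction $(2)\Rightarrow(1)$: I would first apply (2) to truncated coordinate functions $\varphi(y,m)=y\,\chi_R(y,m)$ and $\varphi(y,m)=m\,\chi_R(y,m)$ with $\chi_R$ smooth and compactly supported, and localise along the stopping times $T_R=\inf\{t\geq 0 : |Y_t|+|M_t|\geq R\}$ before letting $R\to\infty$. This provides semimartingale decompositions $Y_t=Y_0+B^y_t+N^y_t$ and $M_t=M_0+B^m_t+N^m_t$ with $B^y,B^m$ predictable of finite variation and $N^y,N^m$ local martingales. To rule out a continuous martingale contribution, I would then test (2) with $\varphi(y,m)=y^2,\,m^2,\,ym$ (once more truncated) and compare to It\^o's formula applied to the decomposition just obtained: the matching forces the quadratic covariation of the continuous parts to vanish, so the diffusion characteristic is indeed $0$. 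Finally, test functions $\varphi(y,m)$ supported away from the origin in the jump increment identify the compensator of the jump measure, and the uniqueness of the dual predictable projection pins it down as $\nu$.

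The main obstacle is the localisation in $(2)\Rightarrow(1)$: coordinate functions lie outside $C_b^2(\R_+^2)$, so one must truncate, apply (2), and pass to the limit while simultaneously identifying the three characteristics. The cleanest way to carry this out is via the uniqueness of the dual predictable projection of a random measure, which is the key black box from Jacod-Shiryaev Chapter II that this argument ultimately invokes; once that uniqueness is in hand, the matching of martingale formulas above forces $(B,0,\nu)$ to coincide with the characteristics of $(Y,M)$.
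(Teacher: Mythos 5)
The paper does not actually prove this statement: it is recalled verbatim (as ``Theorem II.2.42, page 86'' of Jacod--Shiryaev) and used as a black box in the proof of Lemma~\ref{lem:conditions-T}, so there is no in-paper argument to compare yours against. Your sketch is, in outline, the standard textbook proof of that theorem, and both directions are organised correctly: It\^o's formula for semimartingales with vanishing second characteristic gives $(1)\Rightarrow(2)$, and truncated coordinate and quadratic test functions plus uniqueness of the dual predictable projection give $(2)\Rightarrow(1)$. Two places where the sketch is thinner than a complete proof deserve mention. First, in $(2)\Rightarrow(1)$, after stopping at $T_R$ the test function $y\,\chi_R(y,m)$ does \emph{not} reduce to the coordinate function in the jump integral, because a single jump can carry the process out of the region where $\chi_R\equiv 1$; the term $\varphi(Y_{s-}+y,M_{s-}+m)-\varphi(Y_{s-},M_{s-})-y\,\partial_y\varphi-m\,\partial_m\varphi$ then picks up a contribution from large jumps that must be controlled before letting $R\to\infty$, and this is precisely the delicate step in the Jacod--Shiryaev argument. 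Second, the statement as written uses the identity truncation function (the first-order compensation term is $y\,\partial_y\varphi+m\,\partial_m\varphi$ rather than $h(y)\,\partial_y\varphi+h(m)\,\partial_m\varphi$ for a bounded $h$), which implicitly requires $(|x|\wedge|x|^2)*\nu$ to be locally integrable so that the compensated first-order jump term is well defined; your $(1)\Rightarrow(2)$ direction silently uses this when it splits the compensated jump integral into ``the last integral of (2) plus a purely discontinuous local martingale.'' In the application made in the paper the jumps are $-Y_{s-}$ with $Y$ locally bounded, so both points are harmless there, but a self-contained proof of the equivalence should address them.
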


\noindent Then, in our case of study, by choosing the characteristics
\begin{equation}\nonumber
 B^y_t\,=\,\int_0^t\Big[1+Y_s\,a(Y_{s-},M_{s-})\Big]\,ds,\quad
 B^m_t\,=\,\int_0^t\Big[-\alpha M_s+\alpha\,\eps\int_0^s\Ee[a(Y_{s-s'},M_{s-s'})]\,b(ds')\Big]\,ds,
\end{equation}
and by
\begin{equation}\nonumber
 \nu(ds,dy,dm)\,=\,a(Y_{s-},M_{s-})\,ds\,\delta_{-Y_{s-}}(dy)\delta_{0}(dm),
\end{equation}
we get that $(Y_t,M_t)$ is a semi martingale.
\bigskip

As for the second important Jacob-Shiryaev~\cite[Theorem III.2.26 page 157]{MR1943877} cited in the main text, let us now rewrite their general result to our study case. Consider the stochastic differential equation
\begin{equation}\label{eq:Jacob-gen}
\begin{cases}
 (Y_0,M_0)\,=\,(\xi_y,\xi_m)
 \\
 d(Y_t,M_t)\,=\,\beta(t,Y_t,M_t)\,dt+\delta(t,Y_{t-},M_{t-},z)\big(\NN(du,dt)-q(du,dt)\big),
\end{cases}
\end{equation}
where $\NN$ is a standard Poisson process with intensity measure $q(du,dt)=du\,dt$.

\begin{theo}
 Let $\eta$ be a suitable initial condition (i.e., a probability on $\R_+^2)$, and $\beta,\delta$ be
 \begin{equation}\nonumber
 \begin{cases}
  \beta\,=\,(\beta^1,\beta^2),\text{ a Borel function: }\R_+\times \R_+^2\rightarrow\R_+^2,
  \\
  \delta\,=\,(\delta^1,\delta^2),\text{ a Borel function: }\R_+\times \R_+\times \R_+^2\rightarrow\R_+^2.
 \end{cases}
 \end{equation}
 
 The set of all solutions to~\eqref{eq:Jacob-gen} with initial condition $\eta$ is the set of all solutions to a martingale problem on the canonical space where the characteristics $(B,0,\nu)$ are given by
 \begin{equation}\nonumber
  B^i_t(w)\,=\,\int_0^t \beta^i(s,Y_s(w),M_s(w))\,ds,\qquad \nu(w,dt\times dy\,dm)\,=\,dt\times K_t(Y_t(w),M_t(w),dy,dm),
 \end{equation}
 with
 \begin{equation}\nonumber
  K_t(y,m,A)\,=\,\int_0^\infty\indiq_{\{A\setminus\{0\}\}}(\delta(t,u,y,m))\,du.
 \end{equation}
\end{theo}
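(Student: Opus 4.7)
The plan is to establish the stated equivalence in both directions, the main tool being the identification of the characteristics $(B,0,\nu)$ of a jump semimartingale with the drift and the compensated jump measure arising from \eqref{eq:Jacob-gen}.

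\emph{Forward direction (SDE solution yields a solution to the martingale problem).} Suppose $(Y_t,M_t)$ solves \eqref{eq:Jacob-gen}. For $\varphi\in C^2_b(\R_+^2)$, I would apply It\^o's formula for jump processes, obtaining
\begin{equation*}
\varphi(Y_t,M_t)-\varphi(Y_0,M_0)=\int_0^t\nabla\varphi(Y_s,M_s)\cdot\beta(s,Y_s,M_s)\,ds+\int_0^t\!\!\int_0^\infty \Delta_\varphi(s,u)\,\NN(du,ds),
\end{equation*}
where $\Delta_\varphi(s,u):=\varphi\bigl((Y_{s-},M_{s-})+\delta(s,u,Y_{s-},M_{s-})\bigr)-\varphi(Y_{s-},M_{s-})$. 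Compensating $\NN$ by $du\,ds$ and performing the change of variable $u\mapsto\delta(s,u,Y_{s-},M_{s-})$ converts the deterministic $du$ integral into an integral against $K_s(Y_{s-},M_{s-},dy,dm)$, exactly as defined in the statement. Since the driving noise is purely discontinuous there is no continuous martingale part, and reading off the finite-variation part gives $B$ as stated. This shows that $\varphi(Y_t,M_t)$ minus the semimartingale characteristic transform is a local martingale, which is the first half of the equivalence in Jacod--Shiryaev Theorem~II.2.42.

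\emph{Reverse direction (martingale problem solution yields a solution of \eqref{eq:Jacob-gen}).} Conversely, let $(Y_t,M_t)$ be a semimartingale on the canonical space whose characteristics are $(B,0,\nu)$ with the prescribed form. Then the random measure of its jumps $\mu^{(Y,M)}$ has predictable compensator $\nu$. Using the representation theorem for integer-valued random measures (Jacod--Shiryaev, Chapter~III), I would construct, on a suitably enlarged probability space, a Poisson random measure $\NN$ with intensity $du\,ds$ such that the image of $\NN$ under the predictable map $(u,s)\mapsto\bigl(\delta(s,u,Y_{s-},M_{s-}),s\bigr)$ coincides with $\mu^{(Y,M)}$. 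Substituting this representation into the canonical decomposition $(Y_t,M_t)=(Y_0,M_0)+B_t+M^{(d)}_t$, with $M^{(d)}$ the purely discontinuous local martingale, produces exactly equation \eqref{eq:Jacob-gen}, with $\NN$ (minus its compensator) as the driving noise.

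\emph{Main obstacle.} The delicate step is the construction of $\NN$ in the reverse direction when the map $u\mapsto\delta(t,u,y,m)$ fails to be injective: several values of $u$ may produce the same increment, and the relation $y=\delta(t,u,y,m)$ cannot be canonically inverted. The classical remedy, going back to Grigelionis and systematised by Jacod, is to enrich the probability space with an independent auxiliary Poisson measure that records the ``unused'' mark space, providing the randomisation needed to select a $u$ consistently. Because the statement is about solutions in law (weak solutions), this enlargement is harmless; once $\NN$ is available, checking that \eqref{eq:Jacob-gen} is satisfied reduces to bookkeeping with the already identified form of $\nu$ and $B$.
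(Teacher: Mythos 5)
The paper does not prove this statement: it is presented as a restatement, in the paper's notation, of Theorem~III.2.26 (p.~157) of Jacod--Shiryaev \cite{MR1943877}, and the ``proof'' consists of the citation together with the observation that $(Y_t,M_t)$ solves the martingale problem \eqref{eq:MartingalePb}. There is therefore no argument in the paper to compare yours against, and what you have written is, in substance, the standard textbook proof of that theorem: the forward direction by It\^o's formula for jump processes plus identification of the characteristics $(B,0,\nu)$ through the pushforward kernel $K_t$, and the reverse direction by a Grigelionis-type representation of the integer-valued jump measure on an enlarged probability space, with auxiliary randomisation to handle the non-injectivity of $u\mapsto\delta(t,u,y,m)$. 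Your identification of that non-injectivity as the delicate point, and the remark that the enlargement is harmless because the equivalence is one of weak solutions, are exactly the right observations. One small bookkeeping slip: since the SDE \eqref{eq:Jacob-gen} is driven by the \emph{compensated} measure $\NN(du,dt)-q(du,dt)$, your displayed It\^o formula should also carry the continuous finite-variation term $-\int_0^t\!\int_0^\infty \nabla\varphi(Y_{s},M_{s})\cdot\delta(s,u,Y_{s},M_{s})\,du\,ds$; this term is precisely what turns the compensated jump integral into the Jacod--Shiryaev form $\int\!\int\bigl[\varphi(X_{s-}+x)-\varphi(X_{s-})-x\cdot\nabla\varphi(X_{s-})\bigr]\nu(ds,dx)$ while leaving $B=\int\beta\,ds$ as the drift characteristic. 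With that correction the sketch is sound, though a complete proof would still require verifying the local integrability hypotheses under which It\^o's formula and the representation theorem apply.
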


We notice that $(Y_t,M_t)$ indeed solves the Martingale problem given by~\eqref{eq:MartingalePb}, therefore it is a solution to the equation~\eqref{eq:Jacob-gen} for some standard Poisson process, and therefore Lemma~\ref{lem:conditions-T} is proved.

\bibliographystyle{acm}
\bibliography{./PoissonLimit}

\bigskip\bigskip
 \signcq  

\end{document}